\newtheorem{theorem}{Theorem}[section]
\newtheorem{prop}[theorem]{Proposition}%
\newtheorem{definition}[theorem]{Definition}%
\newtheorem{lemma}[theorem]{Lemma}%
\newtheorem{remark}{Remark}%
\algrenewcommand\algorithmicrequire{\textbf{Input:}}
\algrenewcommand\algorithmicensure{\textbf{Output:}}
\newcommand{\diag}{\operatorname*{diag}}
\numberwithin{equation}{section}
\title[Dynamically consistent FV method for AD]{Dynamically consistent finite volume scheme for a bimonomeric simplified model with inflammation processes for Alzheimer's disease}
\author[Barajas-Calonge]{Juan Barajas-Calonge$^{\mathrm{A}}$}
\author[Sepúlveda]{Mauricio Sepúlveda$^{\mathrm{B}}$}
\author[Torres]{Nicolás Torres$^{\mathrm{C}}$}
\author[Villada]{Luis Miguel Villada$^{\mathrm{D}}$}
\date{\today}
\keywords{Alzheimer’s disease; finite volume scheme; non-standard finite difference; dynamically consistent; convection-diffusion-reaction; chemotaxis.}
\begin{document}
\begin{abstract}   
A model of progression of Alzheimer’s disease (AD) incorporating the interactions of A$\beta$-monomers, oligomers, microglial cells and interleukins with neurons is considered.  The resulting convection-diffusion-reaction system consists of four partial differential equations (PDEs) and one ordinary differential equation (ODE). We develop a finite volume (FV) scheme for this system, together with non-negativity and a priori bounds for the discrete solution, so that we establish the existence of a discrete solution to the FV scheme. It is shown that the scheme converges to an admissible weak solution of the model. The reaction terms of the system are discretized using a semi-implicit strategy that coincides with a nonstandard discretization of the spatially homogeneous (SH) model. This construction enables us to prove that the FV scheme is dynamically consistent with respect to the spatially homogeneous version of the model. Finally, numerical experiments are presented to illustrate the model and to assess the behavior of the FV scheme.
\end{abstract}

\thanks{$^{\mathrm{A}}$GIMNAP-Departamento de Matem\'{a}tica, Universidad del B\'{\i}o-B\'{\i}o,
 Casilla
5-C, Concepci\'{o}n, Chile. E-Mail: {\tt
 juan.barajas2001@alumnos.ubiobio.cl}}
\thanks{$^{\mathrm{B}}$CI$^{\mathrm{2}}$MA and Departamento de Ingenier\'{\i}a
Matem\'{a}tica, Facultad de Ciencias F\'{i}sicas y Matem\'{a}ticas, Universidad
de Concepci\'{o}n, Casilla 160-C, Concepci\'{o}n, Chile.  E-Mail: {\tt mauricio@ing-mat.udec.cl}}
\thanks{$^{\mathrm{C}}$Université Côte d'Azur, LJAD, 28 Avenue Valrose, Nice, France. E-Mail: {\tt
		nicolas.torres@univ-cotedazur.fr}}
\thanks{$^{\mathrm{D}}$GIMNAP-Departamento de Matem\'{a}tica, Universidad del B\'{\i}o-B\'{\i}o,
	Casilla
	5-C, Concepci\'{o}n, Chile, and   CI$^{\mathrm{2}}$MA, 
	Universidad de Concepci\'{o}n, 
	Casilla 160-C, Concepci\'{o}n, Chile. E-Mail: {\tt
		lvillada@ubiobio.cl}} 
\maketitle

\section{Introduction} 
\subsection{Scope}

Alzheimer's disease (AD) is a progressive neurodegenerative disorder characterized pathologically by the accumulation of certain protein aggregates in the brain. A primary component of these aggregates is the Amyloid-beta (A$\beta$) peptide, which is considered a central player in the disease's pathogenesis. The ``amyloid cascade hypothesis'' posits that an imbalance between the production and clearance of A$\beta$-peptides leads to their aggregation into soluble oligomers and insoluble plaques, triggering a cascade of neurotoxic events leading to irreversible neuronal damage (see, for example, \cite{haass_soluble_2007,sakono_amyloid_2010,sengupta_role_2016,soto_unfolding_2003,cohen_proliferation_2013}).

Dynamics of Amyloid-beta are not solely governed by its intrinsic aggregation kinetics. The brain's innate immune cells, microglia, also play a critical and dual role in the formation of AD through an inflammation reaction in presence of A$\beta$-oligomers, releasing interleukins (cytokines like IL-1) that stimulates neurons to produce more A$\beta$-monomers \cite{forloni_alzheimers_2018,kinney_inflammation_2018,al-ghraiybah_glial_2022,lopategui_cabezas_role_2014}. However, if the concentration of A$\beta$-oligomers is high enough, then a reaction of stress called UPR (Unfolded protein response) \cite{soto_unfolding_2003} is triggered which leads to a decrease of A$\beta$-monomers production, while the rest of oligomers diffuses in the neuronal environment. In this context, two opposed mechanisms of stimulation and inhibition will determine the persistence of AD or not.

Moreover, microglia are not static; they are recruited to sites of emergence through chemotactically directed movement of cells along a chemical gradient. In this case A$\beta$ aggregates themselves, create a concentration field that guides microglial migration. Upon arrival, microglia attempt to encapsulate and degrade the aggregates, forming a protective barrier around the well-known amyloid plaques. In Alzheimer's disease, this protective system becomes impaired and can turn pathological. Chronic exposure to A$\beta$ can lead to microglial dysfunction, reducing their phagocytic efficiency, while pro-inflammatory cytokines exacerbate neural damage.

In this context, we consider a convection-diffusion-reaction system describing the progression of Alzheimer’s disease (AD) incorporating the interactions of A$\beta$-monomers, A$\beta$-oligomers, microglial cells and interleukins with neurons through different mechanisms such as protein polymerization, inflammation processes and neural stress reactions. The governing model, following the approach in \cite{ciuperca2024qualitative,estavoyer2025spatial}, is a strongly coupled nonlinear system of four partial differential equations (PDEs) and one ordinary differential equation (ODE), namely four parabolic equations describing the evolution of the concentrations of A$\beta$-oligomers $u_1$,  A$\beta$-monomers $u_3$, microglial cells $u_4$, and interleukins $u_5$ coupled with and ordinary differential equation modeling the concentration of oligomers in the amyloid plaques $u_2$. The final model is given by the system
\begin{equation}\label{eq:model}
	\begin{aligned}
		\partial_t u_1-d_1 \Delta u_1 &= F_1(\boldsymbol{u}),\\
		\partial_t u_2 &= F_2(\boldsymbol{u}),\\
		\partial_t u_3-d_3 \Delta u_3 &= F_3(\boldsymbol{u}),\\
		\partial_t u_4-d_4 \Delta u_4+ \nabla \cdot (\chi(u_4)\nabla u_1) &= F_4(\boldsymbol{u}),\\
		\partial_t u_5-d_5 \Delta u_5 &= F_5(\boldsymbol{u}),\\
	\end{aligned}
\end{equation}
where the variables are understood as functions of position $\boldsymbol{x}\in \Omega$ and time $t\in [0,T]$ on a bounded domain $\Omega \subset \mathbb{R}^2$, $d_i$ is the diffusion coefficient of $u_i$, $i=1,3,4,5$ and the convective term in the fourth equation represents the chemotaxis of microglial cells in response to the increase of oligomer population. This chemotactic effect leads to the activation of microglial cells in response to the presence of oligomers, triggering an inflammatory reaction accompanied by the production of interleukins. In this work, we consider a sensitivity function $\chi(u)$ such that $\chi(0)=0$ and which also vanishes once the concentration of microglial cells reaches the recruitment threshold $\hat{m}$, equivalently, $\chi(\hat{m})=0$. Biologically, the threshold condition indicates that when the cell density at a specific location in $\Omega$ attains the critical value $\hat{m}$, further accumulation of cells at that point is halted. This phenomenon is often described as the volume-filling effect, also known as overcrowding prevention \cite{hillen2001global,perthame2009existence,andreianov2011finite}. A common choice of the function $\chi$ is
\begin{equation}\label{eq:chifun}
	\chi(u)=\alpha u(\hat{m}-u), \quad u\in [0,\hat{m}].
\end{equation}
The functional responses $F_i$, $i=1,2,\dots,5$ are of the type production-destruction system of equations (PDS). The function $F_3$ includes a stress term $S(u_1, u_5)$ from UPR phenomenon, which indicates that when the concentration of oligomers $u_1$ surrounding the neuron is high, the neuron becomes stressed and ceases the production of $A\beta$ monomers. The function $F_4$ includes a constant rate of proliferation and a logistic growth of the microgial cells while the function $F_5$ incorporates a proliferation which depends on the concentration of oligomers through a Michaelis–Menten function. The functional responses are therefore given by
\begin{equation}\label{eq:RHS}
	\begin{aligned}
		F_1(\boldsymbol{u})&:= r_1 u_3^2-\gamma(u_4) u_1-\tau_0 u_1,\\
		F_2(\boldsymbol{u})&:= \gamma(u_4) u_1 -\tau_p u_2,\\
		F_3(\boldsymbol{u})&:= S(u_1, u_5)-d u_3-r_2 u_1 u_3-r_1 u_3^2,\\
		F_4(\boldsymbol{u})&:=\dfrac{\alpha_1 u_1}{1+\alpha_2 u_1}(\hat{m}-u_4)u_4-\sigma u_4+\lambda_M,\\
		F_5(\boldsymbol{u})&:=\dfrac{\tau_1 u_1}{1+\tau_2 u_1}u_4-\tau_3 u_5,
	\end{aligned}
\end{equation}
where $S(u_1, u_5)=\frac{\tau_S}{1+C u_1^{\nu}}u_5$, $r_1$ is the bi-monomeric polymerization rate, $r_2$ is the polymerization rate of monomers attaching to oligomers, $d$ is the degradation rate of monomers, $\tau_1, \tau_2$ are the growth coefficients of interleukins, $\tau_3$ is the degradation rate of interleukins, $\tau_p$ is the degradation rate of oligomers in the amyloid plaques, $\alpha_1, \alpha_2$ are growth coefficients of microglial cells, $\lambda_M = \lambda_M(\boldsymbol{x})$ is a spatially dependent function modeling the proliferation of microglial cells, $\hat{m}$ is the capacity of microglial cells, $\sigma$ is the degradation rate of microglial cells and the function $\gamma$ characterizes the recruitment rate of oligomers into amyloid plaques and is assumed to satisfy the following properties:
\begin{equation}\label{eq:prop-gamma}
	\begin{aligned}
		&\gamma(u)>0, \quad |\gamma(u)-\gamma(v)|\leq L_{\gamma} |u-v|, \quad \text{ for all }u,v\geq 0,\\
		&\gamma_{\mathrm{min}}\leq \gamma(u)\leq \gamma_{\mathrm{max}},\quad \text{ for all }u\geq 0,\\
	\end{aligned}
\end{equation}
where $L_{\gamma}, \gamma_{\max},$ and $\gamma_{\min}$ are positive constants. Some possible options for $\gamma$ are the constant function $\gamma(u)=\gamma_0$ or the Michaelis–Menten function which is given by $\gamma(u)=\gamma_0+\frac{\gamma_1 u}{1+\gamma_2 u}$, with $\gamma_1,\gamma_2>0$.\\

In addition, we impose homogeneous Neumann boundary conditions on the boundary $\partial \Omega\times(0,T)$, which can be expressed by
\begin{equation}\label{eq:bound-cond}
	\nabla u_i \cdot \boldsymbol{n} =0, \quad i=1,2,3,5, \quad (d_4 \nabla u_4-\chi(u_4) \nabla u_1)\cdot \boldsymbol{n} = 0,
\end{equation}
where $\boldsymbol{n}$ is the unit normal vector to the boundary $\partial \Omega$ pointing outwards from the domain. To close the model, we set the following initial conditions
\begin{equation}\label{eq:ini-cond}
	u_{i}(\boldsymbol{x},0)=u_{i,0}(\boldsymbol{x}), \quad \textrm{ in }\Omega.
\end{equation}

The purpose of this work is to establish the existence of an admissible weak solutions to the initial–boundary value problem \eqref{eq:model}, \eqref{eq:bound-cond}, \eqref{eq:ini-cond}, while ensuring dynamic consistency with the spatially homogeneous (SH) version of \eqref{eq:model} $\boldsymbol{u}_t=\boldsymbol{F}(\boldsymbol{u})$, where $\boldsymbol{F} = (F_1,\dots,F_5)^{\mathrm{T}}$. To this end, we propose an unconditionally stable finite volume (FV) method for the numerical approximation of the system, in which the reaction term is discretized in a novel semi-implicit form which basically follows the NonStandard Finite Difference (NSFD) philosophy \cite{mickens1993nonstandard}). We prove that the limit of the discrete solutions constitutes an admissible weak solution of system \eqref{eq:model}. In addition, we show that our numerical scheme is dynamically consistent with the SH model in the sense that the non-spatial FV scheme preserves the positivity and boundedness of the solution, as well as the equilibrium points and the stability properties of the disease-free equilibrium, when $\gamma$ is taken to be a constant function. Moreover, we present numerical simulations to illustrate the chemotactic behavior and to highlight the role of the chemotactic coefficients in governing the movement of each species, either toward regions of higher concentration or toward regions of lower concentration. Finally, we demonstrate that our FV scheme is capable of reproducing the Turing patterns reported in the literature.
\subsection{Related work}  \label{subsec:related} 
From a mathematical perspective, several works have investigated partial differential equation models related to Alzheimer’s disease (AD). Andrade et al. \cite{andrade2020modeling} proposed a spatially dependent model for the polymerization of A$\beta$–proteins. Ciuperca et al. \cite{ciuperca_alzheimers_2017} studied the formation of A$\beta$-oligomers and fibrils using a continuous-size framework based on the Lifshitz–Slyozov equations. Hao et al. \cite{hao2016mathematical} developed a multi-component model incorporating astrocytes, microglial cells, and peripheral cell populations in addition to A$\beta$–oligomers. For models inspired by the A$\beta$-system considered in \cite{ciuperca2024qualitative}, further developments include optimal control formulations for anti-inflammatory treatments \cite{torres2025optimal,el2025modelling}. Other reaction–diffusion models related to neurodegenerative processes can be found in \cite{matthaus_diffusion_2006,matthaus_spread_2009,bertsch_alzheimers_2016}.

From a numerical perspective, a wide variety of methods have been employed to approximate systems similar to \eqref{eq:model}. In the context of finite volume (FV) schemes, Angelini et al. \cite{angelini2013finite} developed an FV method for general meshes targeting abstract degenerate parabolic convection–reaction–diffusion equations. FV techniques have also been used for numerical simulations in diverse biological and medical applications, including bone growth models \cite{coudiere2013analysis}, early-stage breast cancer progression \cite{foucher2018convergence,alotaibi2024computational}, epidemic transmission dynamics \cite{bendahmane2009convergence}, tumor growth \cite{anaya2009mathematical,anaya2010numerical}, and food-chain interactions \cite{anaya2015numerical,burger2020numerical}, among others.

Additional numerical approaches for biological models comparable to \eqref{eq:model} include Discontinuous Galerkin (DG) schemes \cite{corti2023discontinuous,antonietti2024discontinuous,pederzoli2025coupled} and Virtual Element Methods \cite{wang2025mixed}. However, many of these numerical strategies fail to preserve essential biological properties of the continuous system, such as positivity, boundedness, or the stability of equilibrium states. To address these limitations, several recent works focus on structure-preserving schemes. For Keller–Segel type models \cite{nino2021convergence,lopez2023numerical,lopez2023theoretical}, semi-implicit Euler time discretizations combined with finite element spatial discretizations have been used to guarantee discrete maximum principles for certain variables. For Cahn–Hilliard type systems \cite{acosta2023upwind,acosta2025property,careaga2026invariant}, upwinding DG schemes have been developed to ensure maximum-principle preservation. This line of research also motivates the development of NSFD schemes, which have been extensively used over the past years to approximate ODE systems in a wide range of applications, see \cite{mickens2005dynamic,patidar2016nonstandard,sharma2021review} and the references therein for reviews on this topic. One of the advantages of this discretization approach is that it is unconditionally stable, which constitutes a significant improvement compared with the traditional explicit Euler scheme (see Section \ref{sec:numexa}, Example 1). In addition, these schemes allow us to analytically prove the preservation of important qualitative features of the dynamical system under study, such as positivity, boundedness, equilibrium points and their stability unlike other methods, including the implicit Euler scheme or Runge–Kutta type methods. The NSFD methods have also been exploited in the treatment of one-dimensional PDEs \cite{hernandez2013nonstandard,namjoo2018numerical} and in two-dimensional problems on structured grids, in combination with finite difference schemes \cite{rao2021performance,kumar2022new}.

We emphasize that the models and numerical methodologies discussed above differ from those considered in this work. Our approach combines a traditional finite volume discretization for the spatial operators with a semi-implicit NSFD treatment of the reaction terms. This hybrid strategy ensures not only the classical stability and convergence properties expected within the FV framework, but also preserves the qualitative dynamics of the spatially homogeneous (SH) model, an essential feature for accurately capturing long-term behavior and the emergence of Turing patterns. Moreover, an additional advantage of our approach is its ability to handle complex geometries, making the scheme well suited to simulate realistic biological and biomedical scenarios.
\subsection{Outline of the paper} \label{subsec:outline} 
The remainder of the paper is organized as follows. Section \ref{sec:prelim} provides some preliminaries that include some dynamical properties of the SH model and the definition of an admissible weak solution for the convection-diffusion-reaction model \eqref{eq:model}. In Section \ref{sec:fvm}, we describe the FV method, first recalling in Section \ref{sec:spacetimedisc} the standard admissible-mesh notation from \cite{EYMARD2000713}. In Section \ref{sec:fv-scheme}, we present the FV discretization of equations \eqref{eq:model}, where we approximate the reaction term by a NSFD approach. Since the method is implicit and requires solving nonlinear algebraic systems at each time level, we must establish that the scheme is well defined and admits a solution at each step. This is addressed in Section \ref{sec:exist}, after first demonstrating in Lemma \ref{lemma:maxprin} that any discrete solution generated by a truncated FV scheme remains in the invariant rectangle. These results allow us to prove in Lemma \ref{lemma:exist-truncated} the existence and uniqueness of a solution for the truncated FV scheme, and then in Theorem \ref{thm:exist-fvm}, we use an inductive argument to show the existence of the FVM scheme for \eqref{eq:model}. In Section \ref{sec:de} we show an a priori $L^2$ estimate for the discrete solutions required to prove the convergence of the scheme. Section \ref{sec:conver} focuses on establishing the convergence of the FV scheme as the mesh size tends to zero. Consequently, in Section \ref{sec:compact} we establish compactness results for the family of discrete solutions, and in Section \ref{sec:conv-ana} we verify that any limit point of these solutions is an admissible weak solution of \eqref{eq:model}. In Section \ref{sec:numexa}, we present three numerical tests. Example 1 illustrates the robustness of the NSFD discretization, while Example 2 examines the chemotactic response of microglial cells to an increased oligomer population, and Example 3 investigates the formation of Turing patterns.

\section{Some properties of the continuous model}\label{sec:prelim}
\subsection{Dynamics of the SH model}
Let us focus first on the SH model. In \cite{ciuperca2024qualitative}, the authors proved the positivity and boundedness (Proposition 3.1) of the solutions of this ODE system. Here, we introduce a rectangle $\mathcal{R}$, within which the bounds for each variable are explicit, and we prove that $\mathcal{R}$ is invariant by the semi-flow generated by the system $\boldsymbol{u}_t = \boldsymbol{F}(\boldsymbol{u})$ for $\hat{m}$ large enough. 
\begin{prop}\label{prop:inv-R}
	Let us assume that $\hat{m}\geq \frac{\lambda_M}{\sigma}$ and consider the rectangular region $\mathcal{R} = \prod_{j=1}^{5}[0,\beta_j]\subset \mathbb{R}^5$, where $\beta_1 = \frac{r_1}{\tau_0+\gamma_{\min}}(\frac{\tau_S \tau_1 \hat{m}}{\tau_2 \tau_3 d})^2$, $\beta_2 = \frac{\gamma_{\max} r_1}{\tau_p (\tau_0+\gamma_{\min})}(\frac{\tau_S \tau_1 \hat{m}}{\tau_2 \tau_3 d})^2$, $\beta_3 = \frac{\tau_S \tau_1 \hat{m}}{\tau_2 \tau_3 d}$, $\beta_4 = \hat{m}$, and $\beta_5 = \frac{\tau_1 \hat{m}}{\tau_2 \tau_3}$. Then, the region $\mathcal{R}$ is invariant by the semi-flow generated by the system $\boldsymbol{u}_t=\boldsymbol{F}(\boldsymbol{u})$.
\end{prop}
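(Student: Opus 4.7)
The plan is to invoke Nagumo's tangent condition for positive invariance of closed convex sets under locally Lipschitz ODE flows: since $\boldsymbol{F}$ is smooth on $\mathbb{R}_{\geq 0}^5$, the box $\mathcal{R}$ is positively invariant provided that on every face $\{u_j=0\}\cap\mathcal{R}$ one has $F_j\geq 0$, and on every face $\{u_j=\beta_j\}\cap\mathcal{R}$ one has $F_j\leq 0$. Thus I must verify these ten sign conditions, and the constants $\beta_j$ are engineered so that each upper-bound inequality is saturated.

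Positivity is immediate from the production--destruction structure of \eqref{eq:RHS}: evaluating $F_j$ at $u_j=0$ cancels every term that carries a factor of $u_j$, leaving only the non-negative source contributions, namely $r_1 u_3^2$ for $F_1$, $\gamma(u_4)u_1$ for $F_2$, $S(u_1,u_5)$ for $F_3$, $\lambda_M$ for $F_4$, and $\tfrac{\tau_1 u_1}{1+\tau_2 u_1}u_4$ for $F_5$. Each is non-negative whenever the remaining coordinates lie in their respective intervals $[0,\beta_i]$.

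For the upper bounds I proceed in a hierarchical order matched to the triangular coupling of the sources. First, on $\{u_4=\hat{m}\}$ the logistic factor $(\hat{m}-u_4)$ annihilates the chemotactic production, so $F_4=-\sigma\hat{m}+\lambda_M\leq 0$ precisely by the hypothesis $\hat{m}\geq \lambda_M/\sigma$. Using $u_4\leq\hat{m}$ and the pointwise estimate $u_1/(1+\tau_2 u_1)\leq 1/\tau_2$, on $\{u_5=\beta_5\}$ I get $F_5\leq \tfrac{\tau_1}{\tau_2}\hat{m}-\tau_3\beta_5=0$. With $u_5\leq\beta_5$ in hand and discarding the non-negative terms $r_2 u_1 u_3$ and $r_1 u_3^2$, on $\{u_3=\beta_3\}$ the bound $S(u_1,u_5)\leq \tau_S u_5$ yields $F_3\leq \tau_S\beta_5-d\beta_3=0$. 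Next, on $\{u_1=\beta_1\}$ the bound $\gamma(u_4)\geq\gamma_{\min}$ together with $u_3\leq\beta_3$ gives $F_1\leq r_1\beta_3^2-(\tau_0+\gamma_{\min})\beta_1=0$. Finally, on $\{u_2=\beta_2\}$, combining $u_1\leq\beta_1$ with $\gamma(u_4)\leq\gamma_{\max}$ produces $F_2\leq \gamma_{\max}\beta_1-\tau_p\beta_2=0$.

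The core difficulty is not any single estimate but the correct sequencing of the arguments: the system is only triangularly decoupled, with $u_4$ providing the ``cap'' that successively feeds into $u_5$, $u_3$, $u_1$, and $u_2$. Two auxiliary points deserve attention. First, the universal saturation bound $u/(1+\tau_2 u)\leq 1/\tau_2$ is what allows the $u_5$-estimate to be closed without first controlling $u_1$, and the analogous bound $S(u_1,u_5)\leq\tau_S u_5$ plays the same role for $u_3$; without these, the hierarchy would not close. Second, because the inequalities above hold uniformly over all admissible values of the remaining coordinates, the tangent condition is valid not only at interior-face points but also along intersections of faces (edges and corners), so the standard Nagumo/Bony invariance theorem applies without modification, yielding positive invariance of $\mathcal{R}$.
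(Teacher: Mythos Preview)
Your proof is correct and follows essentially the same approach as the paper: both invoke a tangent-condition/contraction-set criterion (the paper cites Theorem~6.4 in \cite{haraux2017simple}, you cite Nagumo/Bony) and verify the ten sign conditions $F_j\geq 0$ on $\{u_j=0\}$ and $F_j\leq 0$ on $\{u_j=\beta_j\}$ using the same estimates. Your explicit discussion of the triangular hierarchy $u_4\to u_5\to u_3\to u_1\to u_2$ and of the saturation bounds is a nice addition, but the underlying argument is identical to the paper's.
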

\begin{proof}
	Since $\boldsymbol{F}$ is Lipschitz continuous in $\mathcal{R}$, according to Theorem 6.4 in \cite{haraux2017simple}, we only need to verify that the domain $\mathcal{R}$ is a contraction set for $\boldsymbol{F}$, that is, for all $i=1,\dots, 5$
	\begin{equation}\label{eq:quasi-pos}
		F_i([u_1,\dots, u_{i-1},0,u_{i+1},\dots, u_5])\geq 0, \quad \forall (u_j)_{j \neq i}\in \prod_{j\neq i}[0,\beta_j],       
	\end{equation}
	and,
	\begin{equation}\label{eq:quasi-bound}
		F_i([u_1,\dots, u_{i-1},\beta_i,u_{i+1},\dots, u_5])\leq 0, \quad \forall (u_j)_{j \neq i}\in \prod_{j\neq i}[0,\beta_j].     
	\end{equation}
	The property \eqref{eq:quasi-pos} follows directly from the definition of $F_i$. Let us focus on showing \eqref{eq:quasi-bound}. We notice the following relations 
	\begin{equation}\label{eq:rel-beta}
		\beta_5 = \frac{\tau_1 }{\tau_2 \tau_3}\beta_4, \, \beta_3 = \frac{\tau_{S}}{d} \beta_5, \, \beta_1 = \frac{r_1}{\tau_0+\gamma_{\min}}\beta_3^2, \, \beta_2 = \frac{\gamma_{\max}}{\tau_p }\beta_1.
	\end{equation}
	Then, by using \eqref{eq:rel-beta}, the fact that $\gamma_{\min}\leq \gamma(s)\leq \gamma_{\max}$, for all $s\geq 0$, and the assumption $\hat{m}\geq \frac{\lambda_M}{\sigma}$ it follows
	\begin{align*}
		F_1([\beta_1,u_2,u_3,u_4, u_5]) &= r_1 u_3^2-\gamma(u_4) \beta_1-\tau_0 \beta_1\leq  r_1 \beta_3^2-(\gamma_{\min}+\tau_0) \beta_1 = 0,\\
		F_2([u_1,\beta_2,u_3,u_4, u_5]) &= \gamma(u_4) u_1 -\tau_p \beta_2\leq \gamma_{\max} \beta_1-\tau_p \beta_2 = 0,\\
		F_3([u_1,u_2,\beta_3,u_4, u_5]) &= \dfrac{\tau_S}{1+C u_1^{\nu}}u_5-d \beta_3-r_2u_1 \beta_3-r_1 \beta_3^2\leq \tau_S \beta_5-d \beta_3 =  0,\\
		F_4([u_1,u_2,u_3,\beta_4, u_5]) &= -\sigma \beta_4+\lambda_M = -\sigma \hat{m}+\lambda_M<0,\\
		F_5([u_1,u_2,u_3,u_4, \beta_5]) &= \dfrac{\tau_1 u_1}{1+\tau_2 u_1}u_4-\tau_3 \beta_5 \leq  \dfrac{\tau_1}{\tau_2} \beta_4-\tau_3 \beta_5=0 .
	\end{align*}
	This concludes the proof of the proposition.
\end{proof}
To analyze more dynamical aspects of the model \eqref{eq:modelhomo}, we follow \cite{ciuperca2024qualitative} and assume that the recruitment rate of oligomers to amyloid plaques $\gamma$ is constant, i.e. we set $\gamma(u)=\gamma_0$, for all $u\geq 0$. This essentially corresponds to assuming an average rate at which oligomers are recruited, while considering that they possess a highly stable structure and that their degradation is negligible, implying that $\tau_0=0$. So we get the simplified spatially homogeneous model
\begin{equation}\label{eq:modelhomo}
	\begin{aligned}
		\frac{du_1}{dt}  &=r_1 u_3^2-\gamma_0 u_1,\\
		\frac{du_2}{dt}&= \gamma_0 u_1 -\tau_p u_2,\\
		\frac{du_3}{dt}  & =\dfrac{\tau_S}{1+C u_1^{\nu}}u_5-du_3-r_2 u_1 u_3-r_1u_3^2,\\
		\frac{du_4}{dt}  &=\dfrac{\alpha_1 u_1}{1+\alpha_2 u_1}(\hat{m}-u_4)u_4-\sigma u_4+\lambda_M,\\
		\frac{du_5}{dt}  & =\dfrac{\tau_1 u_1}{1+\tau_2 u_1}u_4-\tau_3 u_5.
	\end{aligned}
\end{equation}
In Theorem 3.3 of \cite{ciuperca2024qualitative}, the authors also investigate the existence of positive equilibrium points. We summarize their result below in the form of a proposition.
\begin{prop}\label{prop:equi}
	The system \eqref{eq:modelhomo} has a disease free equilibrium point $\mathcal{E}_0=(0,0,0,\lambda_M/\sigma,0)$ and if the parameters satisfy the condition $\sigma \gamma_0 \tau_3<\tau_1\tau_S\lambda_M$, then for $d > 0$ small enough, there exist at least two positive equilibria of the system. If $d > 0$ is large enough, then there are no positive solutions to the system.   
\end{prop}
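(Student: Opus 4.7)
The disease-free equilibrium $\mathcal{E}_0$ is verified by direct substitution into $\boldsymbol{F}(\boldsymbol{u})=\boldsymbol{0}$: with $u_1=u_2=u_3=u_5=0$ we have $F_1=F_2=F_3=F_5=0$ automatically, and $F_4(\boldsymbol{u})=-\sigma u_4+\lambda_M=0$ forces $u_4=\lambda_M/\sigma$. For the positive equilibria, the plan is to reduce $\boldsymbol{F}(\boldsymbol{u})=\boldsymbol{0}$ to a single scalar equation in $u_1$ and then apply the intermediate value theorem, distinguishing the regimes of small and large $d$.

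Parametrising in terms of $u_1>0$: from $F_1=0$, $u_3=\sqrt{\gamma_0 u_1/r_1}$; from $F_2=0$, $u_2=\gamma_0 u_1/\tau_p$; from $F_5=0$, $u_5=\tau_1 u_1 u_4/[\tau_3(1+\tau_2 u_1)]$. The identity $F_4(\boldsymbol{u})=0$ is a quadratic in $u_4$ whose constant term is $-\lambda_M<0$, so by Vieta's formulas it admits a unique positive root $u_4=u_4(u_1)$, which is continuous on $[0,\infty)$ with $u_4(0)=\lambda_M/\sigma$ and $u_4(u_1)\le \hat{m}$ thanks to Proposition \ref{prop:inv-R}. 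Substituting these into $F_3=0$ and using $r_1u_3^2=\gamma_0 u_1$ leads, after dividing by $u_1>0$, to the scalar equation $G(u_1)=H(u_1;d)$, where
\[
G(u_1):=\frac{\tau_S\tau_1\,u_4(u_1)}{\tau_3(1+Cu_1^{\nu})(1+\tau_2u_1)},\qquad
H(u_1;d):=\sqrt{\gamma_0/(r_1u_1)}\,(d+r_2u_1)+\gamma_0.
\]

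A careful study of these curves completes the argument. The function $G$ is continuous and bounded with $G(0^+)=\tau_S\tau_1\lambda_M/(\sigma\tau_3)$ and $G(u_1)\to 0$ as $u_1\to\infty$; crucially, the hypothesis $\sigma\gamma_0\tau_3<\tau_1\tau_S\lambda_M$ is precisely $\gamma_0<G(0^+)$. For any $d>0$, $H(\cdot;d)$ blows up at both endpoints and, by elementary calculus, attains its unique minimum
\[
H_{\min}(d)=\gamma_0+2\sqrt{dr_2\gamma_0/r_1}\qquad\text{at}\qquad u_1^{\star}=d/r_2.
\]
For $d$ small, $u_1^{\star}\to 0^+$ and $H_{\min}(d)\to \gamma_0<G(0^+)$, so continuity of $G$ at $0$ yields $G(u_1^{\star})>H(u_1^{\star};d)$; combined with $H>G$ near $0^+$ and near $+\infty$, the intermediate value theorem produces at least one root of $H-G$ in $(0,u_1^{\star})$ and at least one in $(u_1^{\star},\infty)$, hence two positive equilibria. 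For $d$ large, $H_{\min}(d)\to +\infty$ eventually exceeds the finite quantity $\sup_{u_1\ge 0}G(u_1)\le \tau_S\tau_1\hat{m}/\tau_3$, so $H>G$ throughout $(0,\infty)$ and no positive equilibrium exists.

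The main technical obstacle will be the scalar reduction itself: justifying that $u_4(u_1)$ is a genuine single-valued, continuous, strictly positive branch of the quadratic, and controlling $G$ uniformly near $u_1=0$, the subtle point being that $u_1^{\star}$ collapses to $0$ as $d\to 0$ so that the comparison must be carried out on a shrinking interval. Once these continuity and boundedness properties are in hand, both the multiplicity statement for small $d$ and the non-existence statement for large $d$ follow from direct applications of the intermediate value theorem.
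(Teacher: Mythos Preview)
The paper does not actually prove this proposition: it is stated without proof as a summary of Theorem~3.3 in \cite{ciuperca2024qualitative}. So there is nothing to compare your argument against here; what matters is whether your proof stands on its own.

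Your reduction to a scalar equation $G(u_1)=H(u_1;d)$ is correct, and the calculus on $H$ (unique minimum at $u_1^{\star}=d/r_2$, value $H_{\min}(d)=\gamma_0+2\sqrt{dr_2\gamma_0/r_1}$) is clean. The small-$d$ argument is sound: since $u_1^{\star}\to 0$ and $G$ is continuous at $0$ with $G(0^+)>\gamma_0=\lim_{d\to 0}H_{\min}(d)$, you eventually get $G(u_1^{\star})>H(u_1^{\star};d)$, and the endpoint behaviour of $H-G$ then forces a sign change on each side of $u_1^{\star}$. The large-$d$ argument only needs $\sup_{u_1\ge 0}G(u_1)<\infty$, which follows immediately from the boundedness of $u_4(u_1)$.

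One quibble: invoking Proposition~\ref{prop:inv-R} to conclude $u_4(u_1)\le \hat{m}$ is not a valid deduction. That proposition says the rectangle $\mathcal{R}$ is invariant under the flow, which does not by itself force equilibria to lie in $\mathcal{R}$. The bound you want follows directly from the quadratic $F_4=0$: writing $A=\alpha_1 u_1/(1+\alpha_2 u_1)$, the positive root satisfies $u_4^+\le \hat m$ if and only if $\lambda_M\le \sigma\hat m$, which is the standing hypothesis. Alternatively, for the large-$d$ nonexistence you only need $u_4(\cdot)$ bounded on $[0,\infty)$, and this holds regardless since $u_4(\cdot)$ is continuous with finite limits at $0$ and at $+\infty$. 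Fixing this citation is the only change needed.
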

On the other hand, in Proposition 3.2 they analyzed the local stability of the disease-free equilibrium point $\mathcal{E}_0=(0,0,0,\lambda_M/\sigma,0)$, this result reads
\begin{prop}\label{prop:sta-dis-free}
	For the system \eqref{eq:modelhomo}, the disease-free equilibrium $\mathcal{E}_0$ is locally asymptotically stable for every choice of positive parameters.
\end{prop}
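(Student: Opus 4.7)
The natural approach is Lyapunov's indirect method: compute the Jacobian $J := D\boldsymbol{F}(\mathcal{E}_0)$ and verify that every eigenvalue lies strictly in the open left half-plane. The key structural observation is that $\mathcal{E}_0 = (0,0,0,\lambda_M/\sigma,0)$ has $u_1 = u_3 = u_5 = 0$, so most cross terms die on linearization: every partial derivative containing a factor $u_3$, $u_5$, or an explicit $u_1$ outside the Michaelis--Menten pieces $\alpha_1 u_1/(1+\alpha_2 u_1)$ and $\tau_1 u_1/(1+\tau_2 u_1)$ evaluates to zero at $\mathcal{E}_0$.

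Carrying out the differentiation, I expect the Jacobian to take the sparse form
\[
J = \begin{pmatrix}
-\gamma_0 & 0 & 0 & 0 & 0 \\
\gamma_0 & -\tau_p & 0 & 0 & 0 \\
0 & 0 & -d & 0 & \tau_S \\
a_1 & 0 & 0 & -\sigma & 0 \\
a_2 & 0 & 0 & 0 & -\tau_3
\end{pmatrix},
\]
with $a_1 = \alpha_1(\hat{m} - \lambda_M/\sigma)\,\lambda_M/\sigma$ and $a_2 = \tau_1\lambda_M/\sigma$. The columns associated with $u_2$ and $u_4$ carry only their diagonal entries, so $-\tau_p$ and $-\sigma$ are immediately eigenvalues of $J$. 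Deleting rows and columns $2$ and $4$ leaves a $3\times 3$ block whose middle column of $J-\lambda I$ is $(0,-d-\lambda,0)^{\mathrm{T}}$; expanding the determinant along this column shows that the characteristic polynomial of the reduced block factors as $(\gamma_0+\lambda)(d+\lambda)(\tau_3+\lambda)$, producing the three remaining eigenvalues $-\gamma_0$, $-d$, $-\tau_3$.

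Since the full spectrum $\{-\gamma_0, -\tau_p, -d, -\sigma, -\tau_3\}$ lies strictly in the left half-plane for every choice of positive parameters, the classical Lyapunov linearization theorem yields local asymptotic stability of $\mathcal{E}_0$. I do not anticipate any substantive obstacle: the triangular block structure produces all eigenvalues in closed form without invoking Routh--Hurwitz. The only subtlety worth flagging is the differentiability of the stress term $S(u_1,u_5) = \tau_S u_5/(1+Cu_1^\nu)$ with respect to $u_1$ at $u_1 = 0$ when $\nu < 1$; this turns out to be harmless because $\partial_{u_1}S$ carries an overall factor $u_5$ which vanishes at $\mathcal{E}_0$, so the corresponding Jacobian entry is unambiguously zero regardless of $\nu$.
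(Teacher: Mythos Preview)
Your argument is correct. The paper does not actually supply a proof of this proposition in its own text: it is quoted from Proposition~3.2 of \cite{ciuperca2024qualitative} and stated without derivation. However, the Jacobian you compute coincides exactly with the matrix $\mathcal{J}^{C}(\mathcal{E}_0)$ that the paper displays later in Section~5 (in the proof of the discrete stability proposition), and your extraction of the eigenvalues $\{-\gamma_0,-\tau_p,-d,-\sigma,-\tau_3\}$ via the sparse column structure is valid. So your proposal is essentially the natural linearization proof the paper implicitly relies on.
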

In Section \ref{sec:dc}, we will propose a discrete scheme that preserves the invariance of rectangle $\mathcal{R}$, the equilibrium points of the continuous system and also maintains the local stability conditions of the disease-free equilibrium $\mathcal{E}_0$.
\subsection{Admissible weak solutions}
We observe that the system \eqref{eq:model}-\eqref{eq:RHS} can be written as an abstract semilinear problem of the form $\boldsymbol{u}_t+\mathcal{L}(\boldsymbol{u}) = \boldsymbol{F}(\boldsymbol{u})$, where $\mathcal{L}$ is the operator of spatial derivatives. According to Proposition \ref{prop:inv-R}, the rectangle $\mathcal{R}$ is invariant with respect to the associated system of Ordinary Differential Equations  $\boldsymbol{u}_t = \boldsymbol{F}(\boldsymbol{u})$. Following the approach of \cite{coudiere2013analysis}, we define the following weak admissible solutions to the system \eqref{eq:model} with boundary conditions \eqref{eq:bound-cond} and initial conditions \eqref{eq:ini-cond}. 
\begin{definition}\label{def:weaksol}
	Given functions $\boldsymbol{u}_0 = (u_{1,0},\dots, u_{5,0})^{\mathrm{T}}$ defined a.e. in $\Omega$ such that $\boldsymbol{u}_0\in \mathcal{R}$ a.e. in $\Omega$, $\lambda_M\in L^{\infty}(\Omega)$, and $\hat{m}\geq \frac{\|\lambda_M\|_{L^{\infty}(\Omega)}}{\sigma}$, for all $T>0$ we define a \textit{weak solution} of \eqref{eq:model} as a set of functions $\boldsymbol{u} = (u_1,\dots,u_5)^{\mathrm{T}}$ defined a.e. in $\Omega_{T}:=\Omega \times (0,T)$, such that $\boldsymbol{u}\in \mathcal{R}$ a.e. in $\Omega_{T}$, $\boldsymbol{u}_i\in L^2(0,T;H^1(\Omega))$, $i=1,3,4,5$ and for any test functions $\psi_i\in \mathcal{D}(\overline{\Omega}\times [0,T))$, $i=1,\dots,5$ the function $\boldsymbol{u}$ satisfies the following identities:
	\begin{align}
		-\iint_{\Omega_T} u_1 \partial_t \psi_1 \, \mathrm{d} \boldsymbol{x} \mathrm{d} t &+\iint_{\Omega_T} d_1 \nabla u_1\cdot \nabla \psi_1 \, \mathrm{d} \boldsymbol{x} \mathrm{d} t \label{eq:weak-u1}\\
		&= \iint_{\Omega_T} F_1(\boldsymbol{u})\cdot \psi_1 \, \mathrm{d}\boldsymbol{x} \mathrm{d}t+\int_{\Omega} u_{1,0}(\boldsymbol{x}) \psi_1(\boldsymbol{x},0) \, \mathrm{d}\boldsymbol{x},\notag \\
		-\iint_{\Omega_T} u_2 \partial_t \psi_2 \, \mathrm{d} \boldsymbol{x} \mathrm{d} t &=\iint_{\Omega_T} F_2(\boldsymbol{u})\cdot \psi_2 \, \mathrm{d}\boldsymbol{x} \mathrm{d}t+\int_{\Omega} u_{2,0}(\boldsymbol{x}) \psi_2(\boldsymbol{x},0) \, \mathrm{d}\boldsymbol{x},\label{eq:weak-u2}\\
		-\iint_{\Omega_T} u_3 \partial_t \psi_3 \, \mathrm{d} \boldsymbol{x} \mathrm{d} t &+\iint_{\Omega_T} d_3 \nabla u_3\cdot \nabla \psi_3 \, \mathrm{d} \boldsymbol{x} \mathrm{d} t \label{eq:weak-u3}\\
		&= \iint_{\Omega_T} F_3(\boldsymbol{u})\cdot \psi_3 \, \mathrm{d}\boldsymbol{x} \mathrm{d}t+\int_{\Omega} u_{3,0}(\boldsymbol{x}) \psi_3(\boldsymbol{x},0) \, \mathrm{d}\boldsymbol{x},\notag\\
		-\iint_{\Omega_T} u_4 \partial_t \psi_4\, \mathrm{d} \boldsymbol{x} \mathrm{d} t&+\iint_{\Omega_T} (d_4 \nabla u_4\cdot \nabla \psi_4- \chi(u_4) \nabla u_1) \cdot \nabla \psi_4 \, \mathrm{d} \boldsymbol{x} \mathrm{d} t \label{eq:weak-u4}\\
		&=\iint_{\Omega_T} F_4(\boldsymbol{u})\cdot \psi_4 \, \mathrm{d}\boldsymbol{x} \mathrm{d}t+\int_{\Omega} u_{4,0}(\boldsymbol{x}) \psi_4(\boldsymbol{x},0) \, \mathrm{d}\boldsymbol{x},\notag\\
		-\iint_{\Omega_T} u_5 \partial_t \psi_5\, \mathrm{d} \boldsymbol{x} \mathrm{d} t &+\iint_{\Omega_T} d_5 \nabla u_5\cdot \nabla \psi_5 \, \mathrm{d} \boldsymbol{x} \mathrm{d} t \label{eq:weak-u5}\\
		&= \iint_{\Omega_T} F_5(\boldsymbol{u})\cdot \psi_5 \, \mathrm{d}\boldsymbol{x} \mathrm{d}t+\int_{\Omega} u_{5,0}(\boldsymbol{x}) \psi_5(\boldsymbol{x},0) \, \mathrm{d}\boldsymbol{x}.\notag
	\end{align}
\end{definition}
In Section 4, we demonstrate the existence of a weak solution of \eqref{eq:model} according with Definition \ref{def:weaksol}.
\section{Finite Volume Discretization}\label{sec:fvm}
This section is devoted to constructing approximate solutions of problem (1.1). To this end, we introduce the notion of an admissible finite volume mesh (cf. \cite{EYMARD2000713}).
\subsection{Space and time discretization meshes}\label{sec:spacetimedisc}
Let us begin by describing the discretization of the spatial domain. For this purpose, we assume that the domain $\Omega \subset \mathbb{R}^2$ is polygonal. 
\begin{definition}\label{def:am}
	An admissible mesh for $\Omega$ can be defined as a triplet $(\mathcal{T},\mathcal{E},\mathcal{P})$, where $\mathcal{T}$ is a finite collection of non-overlapping, bounded, convex polygonal subsets of $\Omega$ called control volumes, $\mathcal{E}$ is a finite collection of subsets of $\overline{\Omega}$, each contained in a hyperplane of $\mathbb{R}^2$, having strictly positive one-dimensional measure, and referred to as the edges of the control volumes, and $\mathcal{P} = (\boldsymbol{x}_K)_{K \in \mathcal{T}}$ is a finite collection of points in $\Omega$, known as the centers of the control volumes. This definition is employed to describe the discretization of the domain within the framework of the finite volume method. The triplet $(\mathcal{T},\mathcal{E},\mathcal{P})$ has the following properties:
	\begin{enumerate}
		\item The union of the closures of all control volumes is $\overline{\Omega}$, i.e. $\overline{\Omega} = \bigcup_{K\in \mathcal{T}} \overline{K}$.
		\item For any $K\in \mathcal{T}$, there exists $\mathcal{E}_K\subset \mathcal{E}$ such that $\partial K = \overline{K}-K = \bigcup_{\sigma \mathcal{E}_K} \overline{\sigma}$. Moreover, $\mathcal{E} = \bigcup_{K\in \mathcal{T}}\mathcal{E}_K$.
		\item For any $(K,L)\in \mathcal{T}^2$ with $K\neq L$, either the length of $\overline{K}\cap \overline{L}$ is $0$ or $\overline{K}\cap \overline{L} = \overline{\sigma}$, for some $\sigma \in \mathcal{E}$, which will be denoted by $\sigma_{K|L}$.
		\item The family $\mathcal{P}=(\boldsymbol{x}_K)_{K\in \mathcal{T}}$ is such that $\boldsymbol{x}_K\in \overline{K}$, for all $K\in \mathcal{T}$. In addition, if $\sigma = \sigma_{K|L}$, we assume that $\boldsymbol{x}_K\neq \boldsymbol{x}_L$ and the straight line $\mathcal{D}_{K,L}$ going through $\boldsymbol{x}_K$ and $\boldsymbol{x}_L$ is orthogonal to $K|L$.
		\item For any $\sigma \in \mathcal{E}$ with $\sigma \subset \partial \Omega$, denote by $K$ the control volume such that $\sigma \in \mathcal{E}_K$. If $\boldsymbol{x}_K \notin \sigma$, let $\mathcal{D}_{K,\sigma}$ denote the line through $\boldsymbol{x}_K$ orthogonal to $\sigma$. We assume $\mathcal{D}_{K,\sigma} \neq \emptyset$ and define $\boldsymbol{y}_{\sigma} = \mathcal{D}_{K,\sigma} \cap \sigma$.
	\end{enumerate}
\end{definition}
We introduce the following notations. The size of the mesh $\mathcal{T}$ is defined by 
\begin{equation*}
	\mathrm{size}(\mathcal{T}) = \max_{K\in \mathcal{T}} \mathrm{diam}(K),
\end{equation*}
where $\mathrm{diam}(K) = \max_{\boldsymbol{x},\boldsymbol{y}\in K}\|\boldsymbol{x}-\boldsymbol{y}\|_2$. We denote by $\mathcal{N}(K)$ the set of neighbors of the control volume $K$, i.e., $\mathcal{N}(K) = \{L\in \mathcal{T}: \overline{K}\cap \overline{L} = \overline{\sigma}, \text{ for some }\sigma\in \mathcal{E}\}$; a generic neighbor of $K$ is denoted by $L$. Moreover, we denote by $\boldsymbol{n}_{K|L}$ and $d_{K|L}$ the unit normal vector to $\sigma_{K|L}$ outward from $K$ and the distance $\|\boldsymbol{x}_K-\boldsymbol{x}_L\|_2$, respectively. For any $K\in \mathcal{T}$ and $\sigma\in \mathcal{E}$, we denote by $m(K)$ the $2-$dimensional Lebesgue measure of $K$. If $L\in \mathcal{N}(K)$, then $m(\sigma_{K|L})$ will denote the $1-$dimensional measure of the edge $\sigma_{K|L}$, and finally, the transmissibility through $\sigma_{K|L}$ is defined as $\tau_{K|L}:=\frac{m(\sigma_{K|L})}{d_{K|L}}$.
\begin{figure}[H]
	\centering
	\includegraphics[width=0.4\linewidth]{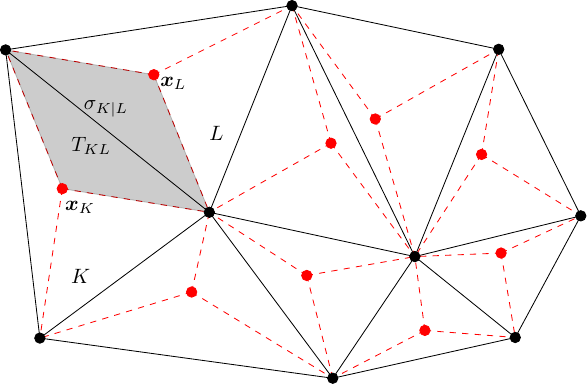}
	\caption{Control volumes, centers and diamonds (in dashed lines).}
	\label{fig:mesh}
\end{figure}
The discrete unknowns are piecewise constant functions on the control volumes $K\in \mathcal{T}$, so we introduce the Hilbert space
\begin{equation}
	H_{\mathcal{T}}(\Omega)=\left\{\phi\in L^2(\Omega):\,  \phi|_{K}\in\mathbb{P}_0(K),\quad  \forall K\in \mathcal{T}\right\}.
\end{equation}
So, every function $u_{\mathcal{T}}\in H_{\mathcal{T}}(\Omega)$ is characterized by its numerical values $(u_K)_{K\in \mathcal{T}}$ such that $u_{\mathcal{T}}|_{K} = u_{K}$ for every $K\in \mathcal{T}$. It is clear that $H_{\mathcal{T}}(\Omega)\subset L^2(\Omega)$ and the usual inner scalar product and norm become
\begin{equation*}
	\big( u_{\mathcal{T}},v_{\mathcal{T}}\big)_{L^2(\Omega)} = \sum_{K\in \mathcal{T}}m(K) u_K v_K, \quad \|u_{\mathcal{T}}\|_{L^2(\Omega)} = \left( \sum_{K\in \mathcal{T}} m(K)|u_K|^2\right)^{1/2}, \quad \forall\, u_{\mathcal{T}},v_{\mathcal{T}}\in H_{\mathcal{T}}(\Omega).
\end{equation*}
We also introduce the discrete semi-norm
\begin{equation}
	|u_{\mathcal{T}}|_{1,\mathcal{T}}^2:= \frac{1}{2}\sum_{K\in \mathcal{T}} \sum_{L\in \mathcal{N}(K)} \tau_{K|L}|u_{L}-u_K|^2,  \quad \forall\, u_{\mathcal{T}} \in H_{\mathcal{T}}(\Omega).
\end{equation}
Moreover, for an interface $\sigma_{K|L}\in \mathcal{E}$ of a control volume $K$, we consider the ``diamond'' $T_{KL}$ constructed by joining the diamond centers $\boldsymbol{x}_K$ and $\boldsymbol{x}_L$ with the extremes of the interval (see Figure \ref{fig:mesh}). The discrete gradient operator $\nabla_{\mathcal{T}} $ of a function $u_{\mathcal{T}}\in H_{\mathcal{T}}(\Omega)$ is a piecewise constant function over the diamonds $T_{KL}$ such that 
\begin{equation}\label{eq:grad-disc}
	\nabla_{\mathcal{T}} u_{\mathcal{T}}|_{T_{KL}} :=
	\begin{cases}
		2\dfrac{u_L-u_K}{d_{K|L}}\boldsymbol{n}_{K|L},& \text{ for }\sigma = \sigma_{K|L} \in \mathcal{E}\setminus \partial \Omega, \\
		0,& \text{ for }\sigma \in \partial K.
	\end{cases}
\end{equation}
For the time discretization of the interval $[0,T]$, we chose a time-step $\Delta t>0$ and $N>0$ such that $(N+1)\Delta t\geq T$, and set $t_n = n \Delta t $ for $n\in \{0,1,\dots, N\}$.  We assume that $\Delta t$ satisfies the following mild restriction
\begin{equation}\label{eq:mild}
	\Delta t <\min\left\{\dfrac{1}{\gamma_{\max}},\dfrac{1}{r_1\beta_3+\tau_S},\dfrac{\alpha_2}{\alpha_2+\hat{m}\alpha_1},\dfrac{\tau_2}{\tau_1}\right\},
\end{equation}
which will be used to prove the existence of a solution to the scheme. We define the size of the space–time discretization by $h:=\max( \mathrm{size}(\mathcal{T}),\Delta t )$. Finally, the complete discrete solution is a time-sequence of piecewise constant functions $(u_{\mathcal{T}}^{n})_{n\in \mathbb{N}}$ such that $u_{\mathcal{T}}^{n}\in H_{\mathcal{T}}(\Omega)$, for all $n$. In addition, we associate a time and space piecewise constant function to this sequence denoted by $u_{h}$ and defined by
\begin{equation}\label{eq:piece-cons}
	u_{h}(\boldsymbol{x},t)=u_K^{n+1}, \quad \text{for a.e. }\quad  (\boldsymbol{x},t)\in K \times (t_n,t_{n+1}),\quad \forall K\in \mathcal{T},\, \forall n\in \{0,1,\dots,N\}.
\end{equation}
\subsection{The finite volume scheme}\label{sec:fv-scheme}
The Finite Volume scheme for \eqref{eq:model} is stated as follows: for all $K\in \mathcal{T}$ and $n\in \{0,1,\dots,N\}$, find $\{u_{i,K}^{n+1}\}_{K\in \mathcal{T}}$, $i=1,\dots,5$ such that
\begin{align}
	m(K)\dfrac{u_{1,K}^{n+1}-u_{1,K}^n}{\Delta t}&-d_1 \sum_{L\in \mathcal{N}(K)}\tau_{K|L} (u_{1,L}^{n+1}-u_{1,K}^{n+1})= m(K) \bar{F}_{1}(\boldsymbol{u}_{K}^{n},\boldsymbol{u}_{K}^{n+1}), \ \label{eq:fvm-1}\\
	\dfrac{u_{2,K}^{n+1}-u_{2,K}^n}{\Delta t}&= \bar{F}_{2}(\boldsymbol{u}_{K}^{n},\boldsymbol{u}_{K}^{n+1}), \label{eq:fvm-2}\\
	m(K)\dfrac{u_{3,K}^{n+1}-u_{3,K}^n}{\Delta t}&-d_3 \sum_{L\in \mathcal{N}(K)}\tau_{K|L} (u_{3,L}^{n+1}-u_{3,K}^{n+1})= m(K) \bar{F}_{3}(\boldsymbol{u}_{K}^{n},\boldsymbol{u}_{K}^{n+1}), \label{eq:fvm-3}\\
	m(K)\dfrac{u_{4,K}^{n+1}-u_{4,K}^n}{\Delta t}&-d_4 \sum_{L\in \mathcal{N}(K)}\tau_{K|L} (u_{4,L}^{n+1}-u_{4,K}^{n+1}) \label{eq:fvm-4}\\
	&+ \sum_{L\in \mathcal{N}(K)} \tau_{K|L} \mathcal{G}\big(u_{4,K}^{n+1},u_{4,L}^{n+1};\delta u_{1,KL}^{n+1}\big) = m(K) \bar{F}_{4}(\boldsymbol{u}_{K}^{n},\boldsymbol{u}_{K}^{n+1}),\notag \\
	m(K)\dfrac{u_{5,K}^{n+1}-u_{5,K}^n}{\Delta t}&-d_5 \sum_{L\in \mathcal{N}(K)}\tau_{K|L} (u_{5,L}^{n+1}-u_{5,K}^{n+1})= m(K) \bar{F}_{5}(\boldsymbol{u}_{K}^{n},\boldsymbol{u}_{K}^{n+1}),\label{eq:fvm-5}
\end{align}
where the right-hand side functions are approximated by a semi-implicit approach in the sense that right-hand side contributions depend on the numerical solution at times $t=t_n$ and $t=t_{n+1}$, i.e., we define:
\begin{equation}\label{eq:rhs-disc}
	\begin{aligned}
		\bar{F}_{1}(\boldsymbol{u}_{K}^{n},\boldsymbol{u}_{K}^{n+1}) &= r_1 (u_{3,K}^n)^2-\gamma(u_{4,K}^n) u_{1,K}^{n+1}-\tau_0 u_{1,K}^{n+1}, \\
		\bar{F}_{2}(\boldsymbol{u}_{K}^{n},\boldsymbol{u}_{K}^{n+1}) &=\gamma(u_{4,K}^n) u_{1,K}^{n}-\tau_p u_{2,K}^{n+1}\\
		\bar{F}_{3}(\boldsymbol{u}_{K}^{n},\boldsymbol{u}_{K}^{n+1}) &=\dfrac{\tau_S}{1+C (u_{1,K}^n)^{\nu}}u_{5,K}^n-du_{3,K}^{n+1}-r_2u_{1,K}^nu_{3,K}^{n+1}-r_1 u_{3,K}^{n}u_{3,K}^{n+1},\\
		\bar{F}_{4}(\boldsymbol{u}_{K}^{n},\boldsymbol{u}_{K}^{n+1}) &=\dfrac{\alpha_1 u_{1,K}^n}{1+\alpha_2 u_{1,K}^n}(\hat{m}-u_{4,K}^{n+1})u_{4,K}^{n}-\sigma u_{4,K}^{n+1}+\lambda_{M,K},\\
		\bar{F}_{5}(\boldsymbol{u}_{K}^{n},\boldsymbol{u}_{K}^{n+1}) &= \dfrac{\tau_1 u_{1,K}^n}{1+\tau_2 u_{1,K}^n}u_{4,K}^n-\tau_3 u_{5,K}^{n+1},
	\end{aligned}
\end{equation}
where $\lambda_{M,K}$ denotes the average of the function $\lambda_M$ over the volume $K$. This discretization is selected to ensure the dynamic consistency of the scheme and coincides with a traditional NSFD approximation of the SH model. Further details are provided in Section \ref{sec:dc}. 

As usual, homogeneous Neumann boundary conditions \eqref{eq:bound-cond} are incorporated implicitly. Specifically, the portions of $\partial K$ that lie in $\partial \Omega$ do not contribute to the sums over $L\in \mathcal{N}(K)$ terms, which effectively enforces a zero–flux condition along the outer edges of the mesh. Here $\delta u_{1,KL}^{n+1}:=u_{1,L}^{n+1}-u_{1,K}^{n+1}$ and $\mathcal{G}$ are numerical flux functions satisfying the following properties: 
\begin{enumerate}
	\item[(H1)]  $\mathcal{G}(\cdot, b;c)$ is nondecreasing for all $a,b\in \mathbb{R}$, and $\mathcal{G}(a, \cdot;c)$ is nonincreasing for all $a,c\in \mathbb{R}$.
	\item[(H2)] $\mathcal{G}(a,b;c) = - \mathcal{G}(b,a;-c)$ for all $a,b,c\in \mathbb{R}$.
	\item[(H3)] $\mathcal{G}(a,a;c)=\chi(a)c$ for all $a,c\in \mathbb{R}$-
	\item[(H4)] There exists a constant $L_{\mathcal{G}}>0$ such that 
	\begin{equation*}
		\big| \mathcal{G}(a,b;c)-\mathcal{G}(\tilde{a},\tilde{b};c)\big|\leq L_{\mathcal{G}}|c|\big(\big|a-\tilde{a}\big|+\big|b-\tilde{b}\big|\big), \quad \forall a,\tilde{a},b,\tilde{b},c\in \mathbb{R}.
	\end{equation*}
\end{enumerate}
In this work, we follow \cite{EYMARD2000713,andreianov2011finite} to construct the numerical flux satisfying assumptions (H1)-(H4). We split the sensitivity function $\chi$ into the non-decreasing part $\chi_{\uparrow}$ and the non-increasing part $\chi_{\downarrow}$, which are given by
\begin{equation*}
	\chi_{\uparrow}(z):= \int_0^{z} \big(\chi'(s)\big)^{+} \, \mathrm{d}s, \quad \chi_{\downarrow}(z):= -\int_0^{z}\big(\chi'(s)\big)^{-} \, \mathrm{d}s.
\end{equation*}
Herein, $s^{+} = \max(s,0)$ and $s^{-} = \max(-s,0)$. Then we take
\begin{equation}
	\mathcal{G}(a,b;c):=c^{+}\big(\chi_{\uparrow}(a)+\chi_{\downarrow}(b)\Big)-c^{-} \big(\chi_{\uparrow}(b)+\chi_{\downarrow}(a)\Big).
\end{equation}
\begin{remark}
	For $\chi(s)=\alpha s(\hat{m}-s),$ the functions $\chi_{\uparrow}$ and $\chi_{\downarrow}$ are given by $\chi_{\uparrow}(a)=\chi\big(\min \big(a,\frac{\hat{m}}{2}\big)\big)$ and $\chi_{\downarrow}(a)=\chi\big(\max \big(a,\frac{\hat{m}}{2}\big)\big)-\chi\big(\frac{\hat{m}}{2}\big)$.
\end{remark}
The discrete problem is composed from the equations \eqref{eq:rhs-disc}-\eqref{eq:fvm-5} and the following discretization of the initial data:
\begin{equation}
	u_{i,K}^{0} = \dfrac{1}{m(K)} \int_K u_{i,0}(\boldsymbol{x}) \, \mathrm{d}\boldsymbol{x}, \quad \forall K\in \mathcal{T},\quad i=1,\dots,5.
\end{equation}

We associate a discrete solution of the scheme at $t = t_{n+1}$ with the vector $\boldsymbol{u}_{h}^{n+1} = (u_{1,h}^{n+1},\dots,u_{5,h}^{n+1})^{\mathrm{T}}$ of the piecewise constant on $\Omega$ functions defined as in \eqref{eq:piece-cons}.
\subsection{Existence of a solution for the finite volume scheme}\label{sec:exist}

In order to prove the existence of an admissible solution of the discrete problem \eqref{eq:fvm-1}-\eqref{eq:fvm-5}, we introduce the following truncated version of \eqref{eq:fvm-4}:
\begin{equation}\label{eq:fvm-trunc}
	\begin{aligned}
		m(K)\dfrac{u_{4,K}^{n+1}-u_{4,K}^n}{\Delta t}&-d_4 \sum_{L\in \mathcal{N}(K)}\tau_{K|L} (u_{4,L}^{n+1}-u_{4,K}^{n+1})\\
		&+ \sum_{L\in \mathcal{N}(K)} \tau_{K|L} \mathcal{\tilde{G}}\big(u_{4,K}^{n+1},u_{4,L}^{n+1};\delta \tilde{u}_{1,KL}^{n+1}\big) = m(K) \bar{F}_{4}(\boldsymbol{u}_{K}^{n},\boldsymbol{u}_{K}^{n+1}),
	\end{aligned}
\end{equation}
with $\tilde{u}_{1,KL}^{n+1}:=\tilde{u}_{1,L}^{n+1}-\tilde{u}_{1,K}^{n+1}$ and $\tilde{u}_i = Z_{[0,\beta_i]}(u_i)$, for $i=1,\dots,5$, where we use the general truncation function $Z_{[a,b]}(r)=\min(b,\max(a,r))$. Moreover, the function $\tilde{\mathcal{G}}$ is obtained from $\mathcal{G}$ by replacing $\chi$ with $\chi \circ Z_{[0,\beta_4]}$, i.e.
\begin{equation*}
	\tilde{\mathcal{G}}(a,b;c) = c^{+}\big(\tilde{\chi}_{\uparrow}(a)+\tilde{\chi}_{\downarrow}(b)\big)-c^{-}\big(\tilde{\chi}_{\uparrow}(b)+\tilde{\chi}_{\downarrow}(a)\big), 
\end{equation*}
where $\tilde{\chi}(s)=\chi(Z_{[0,\beta_4]}(s)) = \chi(s)$ if $0\leq s\leq \beta_4$ and $0$ otherwise. As a result, the function $\tilde{G}$ fulfills conditions (H1)–(H4). In addition, for every $a \in \mathbb{R}$, it holds that
\begin{equation*}
	0\leq \tilde{\chi}_{\uparrow}(a) = \int_0^a \tilde{\chi}'(s)^{+}\, \mathrm{d}s \leq  \int_0^{\beta_4} \tilde{\chi}'(s)^{+}\, \mathrm{d}s = \int_0^a \chi'(s)^{+}\, \mathrm{d}s, \quad 0\leq -\tilde{\chi}_{\downarrow}(a) \leq \int_0^{\beta_4} \chi'(s)^{-}\, \mathrm{d}s.
\end{equation*}
Thus, we have shown the following property
\begin{equation}\label{eq:flux-h5}
	\forall(a,b,c)\in \mathbb{R}^3, \quad \big| \tilde{\mathcal{G}}(a,b;c)\big|\leq |c| \int_0^{\beta_4} \big| \chi'(s)\big| \, \mathrm{d}s.
\end{equation}
Easily, we have that $\tilde{\mathcal{G}}(a,b;c) = \mathcal{G}(a,b;c)$ whenever $a,b \in [0,\beta_4]$, for any $c\in \mathbb{R}$. From now on, we refer to the system given by \eqref{eq:fvm-1}, \eqref{eq:fvm-2}, \eqref{eq:fvm-3}, \eqref{eq:fvm-trunc}, \eqref{eq:fvm-5}, and \eqref{eq:rhs-disc} as the \textit{truncated discrete system.}

\begin{lemma}[Maximum principle]\label{lemma:maxprin}
	Let $\boldsymbol{u}^{0}\in \mathcal{R}$ and $(\boldsymbol{u}_K^{n+1})_{K\in \mathcal{T},n\in \{0,\dots,N\}}$ be any solution of the truncated discrete system. Then $\boldsymbol{u}_K^{n}\in \mathcal{R}$, for all $K\in \mathcal{T}$, for every $n\geq 1$.
\end{lemma}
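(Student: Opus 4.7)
I would argue by induction on $n$. The base case is immediate: $u_{i,K}^{0}$ is the average of $u_{i,0}$ on $K$, and $[0,\beta_i]$ is convex, so $\boldsymbol{u}_0\in\mathcal{R}$ a.e.\ in $\Omega$ gives $\boldsymbol{u}_K^0\in\mathcal{R}$ for every $K\in\mathcal{T}$. For the inductive step, assume $\boldsymbol{u}_K^n\in\mathcal{R}$ for every $K$ and establish $0\leq u_{i,K}^{n+1}\leq\beta_i$ for $i=1,\dots,5$. At level $n+1$ the discrete equations \eqref{eq:fvm-1}--\eqref{eq:fvm-5} (with \eqref{eq:fvm-trunc} in place of \eqref{eq:fvm-4}) couple only unknowns of the same species---the chemotactic flux sees $u_1^{n+1}$ exclusively through $\tilde{u}_1^{n+1}\in[0,\beta_1]$---so each species can be treated separately by a discrete extremum principle. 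Pick $K_0\in\mathcal{T}$ realising $\min_K u_{i,K}^{n+1}$ (resp.\ $K_1$ for the maximum); by monotonicity, the diffusive contribution $d_i\sum_{L\in\mathcal{N}(K_0)}\tau_{K_0|L}(u_{i,L}^{n+1}-u_{i,K_0}^{n+1})$ is $\geq 0$ at $K_0$ and $\leq 0$ at $K_1$, and may be dropped on the correct side of the equation.

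For $i\in\{1,3,5\}$ (and pointwise for the ODE $i=2$), the reaction term splits as $\bar F_i(\boldsymbol{u}_K^n,\boldsymbol{u}_K^{n+1})=b_{i,K}^n-a_{i,K}^n u_{i,K}^{n+1}$ with $a_{i,K}^n>0$ and $b_{i,K}^n\geq 0$ under the inductive hypothesis. After discarding the diffusion at each extremum one gets
\begin{equation*}
(1+\Delta t\,a_{i,K_0}^n)u_{i,K_0}^{n+1}\geq u_{i,K_0}^n+\Delta t\,b_{i,K_0}^n\geq 0,\qquad (1+\Delta t\,a_{i,K_1}^n)u_{i,K_1}^{n+1}\leq u_{i,K_1}^n+\Delta t\,b_{i,K_1}^n,
\end{equation*}
so $u_{i,K_0}^{n+1}\geq 0$ is immediate, and at the maximum the bound $u_{i,K_1}^{n+1}\leq\beta_i$ reduces to checking $\max b_i\leq(\min a_i)\beta_i$ using $u_{i,K_1}^n\leq\beta_i$. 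The four identities
\begin{equation*}
r_1\beta_3^2=(\tau_0+\gamma_{\min})\beta_1,\quad \gamma_{\max}\beta_1=\tau_p\beta_2,\quad \tau_S\beta_5=d\beta_3,\quad (\tau_1/\tau_2)\beta_4=\tau_3\beta_5,
\end{equation*}
supplied by \eqref{eq:rel-beta}, are precisely what make each of these reduced inequalities saturate with equality, closing the bounds for $i\in\{1,2,3,5\}$.

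The delicate case is $i=4$, which I would treat by contradiction. Suppose first $u_{4,K_0}^{n+1}<0$ at a minimum. By (H1) and $u_{4,L}^{n+1}\geq u_{4,K_0}^{n+1}$ for every neighbour,
\begin{equation*}
\tilde{\mathcal{G}}\bigl(u_{4,K_0}^{n+1},u_{4,L}^{n+1};\delta\tilde{u}_{1,K_0L}^{n+1}\bigr)\leq\tilde{\mathcal{G}}\bigl(u_{4,K_0}^{n+1},u_{4,K_0}^{n+1};\delta\tilde{u}_{1,K_0L}^{n+1}\bigr)=\tilde{\chi}(u_{4,K_0}^{n+1})\,\delta\tilde{u}_{1,K_0L}^{n+1}=0,
\end{equation*}
because $\tilde{\chi}$ vanishes on $\mathbb{R}\setminus[0,\beta_4]$. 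Hence $-\sum_{L}\tau_{K_0|L}\tilde{\mathcal{G}}(\cdots)\geq 0$, the diffusion contribution is $\geq 0$, and $\bar F_{4}(\boldsymbol{u}_{K_0}^{n},\boldsymbol{u}_{K_0}^{n+1})>0$ because $(\hat m-u_{4,K_0}^{n+1})>0$, $u_{4,K_0}^n\geq 0$, $-\sigma u_{4,K_0}^{n+1}>0$ and $\lambda_{M,K_0}\geq 0$. The right-hand side of \eqref{eq:fvm-trunc} at $K_0$ is therefore strictly positive, contradicting $m(K_0)(u_{4,K_0}^{n+1}-u_{4,K_0}^n)/\Delta t<0$. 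Dually, if $u_{4,K_1}^{n+1}>\beta_4=\hat m$ at a maximum, the same monotonicity together with $\tilde{\chi}(u_{4,K_1}^{n+1})=0$ forces $\tilde{\mathcal{G}}(\cdots)\geq 0$, the diffusion contribution is $\leq 0$, and $\bar F_{4}<0$ because $(\hat m-u_{4,K_1}^{n+1})<0$ and $\lambda_{M,K_1}-\sigma u_{4,K_1}^{n+1}\leq\|\lambda_M\|_{L^{\infty}(\Omega)}-\sigma\hat m\leq 0$; this is where the standing assumption $\hat m\geq\|\lambda_M\|_{L^{\infty}(\Omega)}/\sigma$ intervenes. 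The symmetric contradiction closes the induction. The main obstacle is precisely this $u_4$ step: the chemotactic flux is the only place where the per-component semi-implicit rearrangement fails, and the truncation built into $\tilde{\mathcal{G}}$, which forces $\tilde{\chi}$ to vanish off $[0,\beta_4]$, is what rescues the contradiction argument at the rectangle's boundary.
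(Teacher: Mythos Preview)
Your proof is correct and follows essentially the same approach as the paper: induction on $n$, picking the extremal control volume, using the monotonicity (H1) together with $\tilde{\chi}\equiv 0$ outside $[0,\beta_4]$ to neutralise the chemotactic flux, and the relations \eqref{eq:rel-beta} to saturate the upper bounds. The only cosmetic difference is that the paper phrases the extremum argument by multiplying by $(u_{4,K}^{n+1})^-$ and $(u_{4,K}^{n+1}-\beta_4)^+$ and handles the $u_4$ upper bound via an auxiliary monotone rational function $f$, whereas you argue by direct contradiction and a direct sign analysis of $\bar F_4$; these are equivalent reformulations of the same idea.
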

\begin{proof}
	We proceed by induction on $n$. By hypothesis, we have that $\boldsymbol{u}_K^{0} = \boldsymbol{u}^{0}\in \mathcal{R}$, for all $K\in \mathcal{T}$. So, let us assume that $\boldsymbol{u}_K^{n}\in \mathcal{R}$, for all $K\in \mathcal{T}$. We focus first on \eqref{eq:fvm-trunc}. We notice that $\bar{F}_{4}(\boldsymbol{u}_{K}^{n},\boldsymbol{u}_{K}^{n+1}) = \Psi_4(\boldsymbol{u}_K^{n}) - \Phi_4(\boldsymbol{u}_K^{n}) u_{4,K}^{n+1}$, where $\Psi_4(\boldsymbol{u}_K^{n}) = \frac{\alpha_1 \hat{m} u_{1,K}^n}{1+\alpha_2 u_{1,K}^n} u_{4,K}^{n}+\lambda_M$ and $\Phi_4(\boldsymbol{u}_K^{n}) = \frac{\alpha_1 u_{1,K}^n}{1+\alpha_2 u_{1,K}^n} u_{4,K}^{n}+\sigma$. So, for all $K\in \mathcal{T}$ we have that 
	\begin{equation}\label{eq:maxprin-1}
		\begin{aligned}
			u_{4,K}^{n+1}-\left(\dfrac{u_{4,K}^n+\Delta t \Psi_4(\boldsymbol{u}_K^{n})}{1+\Delta t \Phi_4(\boldsymbol{u}_K^{n})}\right)&=\dfrac{d_4 \Delta t}{m(K)[1+\Delta t \Phi_4(\boldsymbol{u}_K^{n})]} \sum_{L\in \mathcal{N}(K)}\tau_{K|L} (u_{4,L}^{n+1}-u_{4,K}^{n+1})\\
			&\quad-\dfrac{1}{1+\Delta t \Phi_4(\boldsymbol{u}_K^{n})}\sum_{L\in \mathcal{N}(K)} \tau_{K|L} \mathcal{\tilde{G}}\big(u_{4,K}^{n+1},u_{4,L}^{n+1};\delta \tilde{u}_{1,KL}^{n+1}\big). 
		\end{aligned}
	\end{equation}
	We first show that $u_{4,K}^{n+1}\geq 0$, for all $K\in \mathcal{T}$. To do so, let us fix $K\in \mathcal{T}$ such that $u_{4,K}^{n+1}:= \min\{u_{4,L}^{n+1}: L\in \mathcal{T} \}$. Multiplying \eqref{eq:maxprin-1} by $(u_{4,K}^{n+1})^{-}$, we deduce
	\begin{equation}
		\begin{aligned}
			\left(u_{4,K}^{n+1}-\dfrac{u_{4,K}^n+\Delta t \Psi_4(\boldsymbol{u}_K^{n})}{1+\Delta t \Phi_4(\boldsymbol{u}_K^{n})}\right)(u_{4,K}^{n+1})^{-}=S_1+S_2,
		\end{aligned}
	\end{equation}
	where we define:
	\begin{align*}
		S_1&:=\dfrac{d_4 \Delta t}{m(K)[1+\Delta t \Phi_4(\boldsymbol{u}_K^{n})]} \sum_{L\in \mathcal{N}(K)}\tau_{K|L} (u_{4,L}^{n+1}-u_{4,K}^{n+1})(u_{4,K}^{n+1})^{-},\\
		S_2&:=-\dfrac{1}{1+\Delta t \Phi_4(\boldsymbol{u}_K^{n})}\sum_{L\in \mathcal{N}(K)} \tau_{K|L} \mathcal{\tilde{G}}\big(u_{4,K}^{n+1},u_{4,L}^{n+1};\delta \tilde{u}_{1,KL}^{n+1}\big)(u_{4,K}^{n+1})^{-}.
	\end{align*}
	By the choice of $K$ we know that $(u_{4,L}^{n+1}-u_{4,K}^{n+1})(u_{4,K}^{n+1})^{-}\geq 0$ and $\Phi_4(\boldsymbol{u}_K^{n})\geq 0$ because $\boldsymbol{u}_K^{n}\in \mathcal{R}$. So we have that $S_1\geq 0$. Due to the properties (H1)-(H4) of $\mathcal{\tilde{G}}$ and the extension by zero of the continuous function $\chi$ we obtain that
	\begin{align*}
		\mathcal{\tilde{G}}\big(u_{4,K}^{n+1},u_{4,L}^{n+1};\delta \tilde{u}_{1,KL}^{n+1}\big)\leq \mathcal{\tilde{G}}\big(u_{4,K}^{n+1},u_{4,K}^{n+1};\delta \tilde{u}_{1,KL}^{n+1}\big) = \chi(u_{4,K}^{n+1})\delta \tilde{u}_{1,KL}^{n+1}(u_{4,K}^{n+1})^{-} = 0, 
	\end{align*}
	because $\chi(u_{4,K}^{n+1}) = 0$ if $u_{4,K}^{n+1}\leq 0$ and $(u_{4,K}^{n+1})^{-} = 0$ otherwise. Thus, $S_2\geq 0$ and we get
	\begin{align*}
		0\leq \left(u_{4,K}^{n+1}-\dfrac{u_{4,K}^n+\Delta t \Psi_4(\boldsymbol{u}_K^{n})}{1+\Delta t \Phi_4(\boldsymbol{u}_K^{n})}\right)(u_{4,K}^{n+1})^{-}\leq u_{4,K}^{n+1}(u_{4,K}^{n+1})^{-} = -|(u_{4,K}^{n+1})^{-}|^2.
	\end{align*}
	From this, we deduce that $(u_{4,K}^{n+1})^{-}=0$, i.e. $u_{4,K}^{n+1}\geq 0$, for all $K\in \mathcal{T}$. For showing that $u_{4,K}^{n+1}\leq \beta_4$, for all $K\in \mathcal{T}$, let us choose $K\in \mathcal{T}$ such that $u_{4,K}^{n+1}:= \max\{u_{4,L}^{n+1}: L\in \mathcal{T} \}$. We multiply \eqref{eq:fvm-2} by $(u_{4,K}^{n+1}-\beta_4)^{+}$ to obtain that
	\begin{equation}
		\begin{aligned}
			\left(u_{4,K}^{n+1}-\dfrac{u_{4,K}^n+\Delta t \Psi_4(\boldsymbol{u}_K^{n})}{1+\Delta t \Phi_4(\boldsymbol{u}_K^{n})}\right)(u_{4,K}^{n+1}-\beta_4)^{+}=T_1+T_2,
		\end{aligned}
	\end{equation}
	where we define:
	\begin{align*}
		T_1&:=\dfrac{d_4 \Delta t}{m(K)[1+\Delta t \Phi_4(\boldsymbol{u}_K^{n})]} \sum_{L\in \mathcal{N}(K)}\tau_{K|L} (u_{4,L}^{n+1}-u_{4,K}^{n+1})(u_{4,K}^{n+1}-\beta_4)^{+},\\
		T_2&:=-\dfrac{1}{1+\Delta t \Phi_4(\boldsymbol{u}_K^{n})}\sum_{L\in \mathcal{N}(K)} \tau_{K|L} \mathcal{G}\big(u_{4,K}^{n+1},u_{4,L}^{n+1};\delta u_{1,KL}^{n+1}\big)(u_{4,K}^{n+1}-\beta_4)^{+}.
	\end{align*}
	By the choice of $K$ we know that $(u_{4,L}^{n+1}-u_{4,K}^{n+1})(u_{4,K}^{n+1}-\beta_4)^{+}\leq 0$ and $\Phi_4(\boldsymbol{u}_K^{n})\geq 0$ because $\boldsymbol{u}_K^{n}\in \mathcal{R}$. So it follows that $T_1\leq 0$. Due to the properties (H1)-(H4) of $\mathcal{\tilde{G}}$ and the extension by zero of the continuous function $\chi$ we obtain that
	\begin{align*}
		\mathcal{\tilde{G}}\big(u_{4,K}^{n+1},u_{4,L}^{n+1};\delta \tilde{u}_{1,KL}^{n+1}\big)\geq \mathcal{\tilde{G}}\big(u_{4,K}^{n+1},u_{4,K}^{n+1};\delta \tilde{u}_{1,KL}^{n+1}\big) = \chi(u_{4,K}^{n+1})\delta \tilde{u}_{1,KL}^{n+1}(u_{4,K}^{n+1})^{-} = 0, 
	\end{align*}
	because $\chi(u_{4,K}^{n+1}) = 0$ if $u_{4,K}^{n+1}\geq \beta_4$ and $(u_{4,K}^{n+1}-\beta_4)^{+} = 0$ otherwise. Therefore we have that $T_2\leq 0$. Moreover, by using the induction hypothesis $\boldsymbol{u}_K^{n}\in \mathcal{R}$ and the hypothesis $\hat{m}\geq \frac{\|\lambda_M\|_{L^{\infty}(\Omega)}}{\sigma}$ it follows that
	\begin{align*}
		\frac{u_{4,K}^n+\Delta t \Psi_4(\boldsymbol{u}_K^{n})}{1+\Delta t \Phi_4(\boldsymbol{u}_K^{n})}&\leq \dfrac{u_{4,K}^n+\dfrac{\alpha_1 u_{1,K}^n }{1+\alpha_2 u_{1,K}^n}\Delta t \hat{m} u_{4,K}^{n}+\Delta t\sigma \hat{m} }{1+\Delta t\left(\dfrac{\alpha_1 u_{1,K}^n}{1+\alpha_2 u_{1,K}^n} u_{4,K}^n+\sigma \right)} = \dfrac{\left(1+\dfrac{\alpha_1 u_{1,K}^n }{1+\alpha_2 u_{1,K}^n}\Delta t \hat{m} \right)u_{4,K}^n+\Delta t\sigma \hat{m} }{1+\Delta t\left(\dfrac{\alpha_1 u_{1,K}^n}{1+\alpha_2 u_{1,K}^n} u_{4,K}^n+\sigma \right)}\\
		&= f(u_{4,K}^n),
	\end{align*}
	where $f:[0,+\infty)\longrightarrow (0,+\infty)$ is given by,
	\begin{equation}\label{eq:funf}
		f(x):=\dfrac{\left(1+\dfrac{\alpha_1 u_{1,K}^n }{1+\alpha_2 u_{1,K}^n}\Delta t \hat{m} \right)x+\Delta t\sigma \hat{m} }{1+\Delta t\left(\dfrac{\alpha_1 u_{1,K}^n}{1+\alpha_2 u_{1,K}^n} x+\sigma \right)}.
	\end{equation}
	We notice that $f$ is differentiable in $[0,+\infty)$ and its derivative is given by
	{ \begin{align*}
			f'(x)&=\dfrac{1+\Delta t \left[\left(\dfrac{\alpha_1 u_{1,K}^n}{1+\alpha_2 u_{1,K}^n}\right)\hat{m}+\sigma\right]}{\left[1+\Delta t\left(\dfrac{\alpha_1 u_{1,K}^n}{1+\alpha_2 u_{1,K}^n} x+\sigma \right)\right]^2}.
	\end{align*}}
	So, $f'(x)>0$, for all $x\geq 0$. Then, $f$ is an increasing function over $[0,+\infty)$. Now, by the induction hypothesis we know that $u_{4}^{n}\leq \hat{m}$, then $\frac{u_{4,K}^n+\Delta t \Psi_4(\boldsymbol{u}_K^{n})}{1+\Delta t \Phi_4(\boldsymbol{u}_K^{n})}\leq f(u_{4}^n)\leq f(\hat{m})=\hat{m} = \beta_4$. In this way, we obtain that
	\begin{align*}
		0\geq \left(u_{4,K}^{n+1}-\dfrac{u_{4,K}^n+\Delta t \Psi_4(\boldsymbol{u}_K^{n})}{1+\Delta t \Phi_4(\boldsymbol{u}_K^{n})}\right)(u_{4,K}^{n+1}-\beta_4)^{+}\geq (u_{4,K}^{n+1}-\beta_4)(u_{4,K}^{n+1}-\beta_4)^{+}= |(u_{4,K}^{n+1}-\beta_4)^{+}|^2.
	\end{align*}
	From this, we deduce that $(u_{4,K}^{n+1}-\beta_4)^{+}=0$, i.e. $u_{4,K}^{n+1}\leq \beta_4$, for all $K\in \mathcal{T}$. In the same way, we can obtain the bounds $0\leq u_{i,K}^{n+1}\leq \beta_i,$ for all $K\in \mathcal{T}$ and $i=1,2,3,5$. This implies $\boldsymbol{u}_K^{n+1}\in \mathcal{R}$, for all $K\in \mathcal{T}$ and we conclude the proof of the lemma.
\end{proof}
\begin{lemma}[Existence of discrete truncated problem]\label{lemma:exist-truncated}
	Let $\mathcal{T}$ be an admissible discretization of $\Omega$ and $\boldsymbol{u}^{0}\in \mathcal{R}$. Then the truncated discrete problem \eqref{eq:fvm-1}-\eqref{eq:fvm-3}, \eqref{eq:fvm-trunc}, \eqref{eq:fvm-5}, \eqref{eq:rhs-disc} admits at least one solution $\{\boldsymbol{u}_{K}^{n+1}\}_{K\in \mathcal{T},n\in\{0,\dots,N\}}$. 
\end{lemma}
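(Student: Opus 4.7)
The plan is to solve the truncated discrete system at time step $n+1$ hierarchically, exploiting the structure of the semi-implicit reaction discretization \eqref{eq:rhs-disc}. Inspecting the functions $\bar{F}_i$ shows that each $\bar{F}_i(\boldsymbol{u}_K^n,\boldsymbol{u}_K^{n+1})$ depends on $\boldsymbol{u}^{n+1}$ only through $u_{i,K}^{n+1}$; consequently the four equations for $u_1, u_2, u_3, u_5$ at time $n+1$ are mutually decoupled once $\boldsymbol{u}^n$ is known, whereas \eqref{eq:fvm-trunc} couples only to $u_1^{n+1}$ through the truncated chemotactic flux $\tilde{\mathcal{G}}$. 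I would therefore first solve for $u_1^{n+1}, u_2^{n+1}, u_3^{n+1}, u_5^{n+1}$ in parallel, and finally for $u_4^{n+1}$.

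For $u_1^{n+1}$, equation \eqref{eq:fvm-1} is a square linear system $A_1 U_1^{n+1} = b_1^n$ whose matrix reads
\begin{equation*}
A_1 = \frac{1}{\Delta t}\diag\bigl(m(K)\bigr) + d_1 L + \diag\bigl(m(K)(\gamma(u_{4,K}^n) + \tau_0)\bigr),
\end{equation*}
with $L$ the standard symmetric FV Laplacian matrix associated with the transmissibilities $(\tau_{K|L})$. Since $\boldsymbol{u}_K^n \in \mathcal{R}$ by induction, the reaction diagonal is nonnegative, so $A_1$ is a symmetric, strictly diagonally dominant M-matrix, hence invertible. The same reasoning yields unique $u_3^{n+1}$ (diagonal contribution $d + r_2 u_{1,K}^n + r_1 u_{3,K}^n \geq 0$) and $u_5^{n+1}$ (contribution $\tau_3 > 0$), while the pointwise equation \eqref{eq:fvm-2} admits the explicit solution $u_{2,K}^{n+1} = \bigl(u_{2,K}^n + \Delta t\, \gamma(u_{4,K}^n) u_{1,K}^n\bigr)/(1 + \Delta t\, \tau_p)$.

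For equation \eqref{eq:fvm-trunc}, with $u_1^{n+1}$ now at hand, I would use a Brouwer fixed-point argument on a linearised map. Using the splitting $\bar{F}_4(\boldsymbol{u}_K^n,\boldsymbol{u}_K^{n+1}) = \Psi_4(\boldsymbol{u}_K^n) - \Phi_4(\boldsymbol{u}_K^n) u_{4,K}^{n+1}$ introduced in the proof of Lemma \ref{lemma:maxprin}, define $\Theta : \mathbb{R}^{|\mathcal{T}|} \to \mathbb{R}^{|\mathcal{T}|}$ by setting $\Theta(v) = w$, where $w$ solves the linear system obtained from \eqref{eq:fvm-trunc} by freezing the chemotactic flux at $v$, namely
\begin{equation*}
\frac{m(K)}{\Delta t}(w_K - u_{4,K}^n) - d_4 \sum_{L \in \mathcal{N}(K)} \tau_{K|L}(w_L - w_K) + m(K)\Phi_4(\boldsymbol{u}_K^n) w_K = m(K)\Psi_4(\boldsymbol{u}_K^n) - \sum_{L\in \mathcal{N}(K)} \tau_{K|L}\,\tilde{\mathcal{G}}(v_K, v_L; \delta \tilde{u}_{1,KL}^{n+1}).
\end{equation*}
The left-hand side matrix is again a strictly diagonally dominant M-matrix with a fixed, mesh-dependent inverse. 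The right-hand side is continuous in $v$, and by property \eqref{eq:flux-h5} together with the bound $|\delta \tilde{u}_{1,KL}^{n+1}| \leq \beta_1$ provided by the truncation $\tilde{u}_1 = Z_{[0,\beta_1]}(u_1)$, it is uniformly bounded in $v \in \mathbb{R}^{|\mathcal{T}|}$. Hence $\Theta$ is continuous with range contained in some fixed closed ball $B$, so in particular $\Theta(B) \subset B$, and Brouwer's fixed-point theorem produces a fixed point of $\Theta$, which by construction is a solution of \eqref{eq:fvm-trunc}.

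The main obstacle will be the $u_4$ step: one must verify that the truncations built into $\tilde{\chi}$ and $\tilde{u}_1$ genuinely produce a bound on the right-hand side of the linearised system that is uniform in $v$ (rather than only locally uniform), so that $\Theta$ has range in a single fixed ball and Brouwer's theorem applies without further compactness considerations. Once this is secured, the remaining pieces are routine M-matrix linear algebra. I do not expect the mild time-step restriction \eqref{eq:mild} to play a role in the present existence argument; it is more likely needed in the a priori $L^2$ estimates of Section \ref{sec:de} or in uniqueness considerations.
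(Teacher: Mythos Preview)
Your proposal is correct and takes a genuinely different route from the paper. The paper treats all five components simultaneously: it defines a continuous map $\mathbb{P}$ on the product space $X_h=[H_{\mathcal{T}}(\Omega)]^5$ encoding the full truncated system, and shows $[\mathbb{P}(\boldsymbol{u}_h^{n+1}),\boldsymbol{u}_h^{n+1}]\geq C_4\|\boldsymbol{u}_h^{n+1}\|_{X_h}^2 - C_2\|\boldsymbol{u}_h^{n+1}\|_{X_h} - \text{const}$ by bounding each bilinear form and using Young's inequality on the reaction terms; positivity of $C_4$ relies precisely on the mild restriction \eqref{eq:mild}, and existence follows from the Brouwer-type Lemma~1.4 of Temam. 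Your decoupling approach exploits the specific semi-implicit structure of $\bar{F}_i$ to reduce four of the five unknowns to M-matrix linear systems and isolates the only genuine nonlinearity in the $u_4$-equation, where your fixed-point map $\Theta$ is well posed because the truncations in $\tilde{\chi}$ and $\tilde{u}_1$ give the uniform flux bound \eqref{eq:flux-h5} you identify. Your approach is more elementary, yields uniqueness for $u_1,u_2,u_3,u_5$ as a by-product, and --- contrary to your expectation --- actually \emph{improves} on the paper here: the paper's coercivity estimate needs \eqref{eq:mild}, whereas your M-matrix and Brouwer arguments do not require any time-step restriction. The paper's monolithic approach, on the other hand, does not rely on the particular decoupling of the $\bar{F}_i$ and would transfer more readily to variants of the scheme where the reaction couplings at level $n+1$ are less favourable.
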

\begin{proof}
	Let us make the proof by induction on $n$, so we assume that $\{\boldsymbol{u}_{K}^{m}\}_{K\in \mathcal{T}}$ exists for $m=1,\dots,n$. Let us introduce the Hilbert space $X_h = [H_{\mathcal{T}}(\Omega)]^5$ of quintuples $\boldsymbol{u}_h^{n+1} = (u_{1,h}^{n+1},\dots,u_{5,h}^{n+1})^{\mathrm{T}}$ of discrete functions on $\Omega$. We define the norm,
	\begin{equation*}
		\|\boldsymbol{u}_h^{n+1}\|_{X_h}^{2}:=\sum_{i=1}^5\Big(|u_{i,h}^{n+1}|_{1,\mathcal{T}}^2+\|u_{i,h}^{n+1}\|_{L^2(\Omega)}^2\Big).
	\end{equation*}
	Let $\boldsymbol{\psi}_h = (\psi_{1,h},\dots,\psi_{5,h})^{\mathrm{T}}$. Multiplying the equations of the truncated discrete problem  by $\psi_{i,h}$ and summing the result over $K\in \mathcal{T}$ we obtain
	\begin{equation}
		\begin{aligned}
			&\dfrac{1}{\Delta t}\Big( \boldsymbol{B}_h(\boldsymbol{u}_h^{n+1},\boldsymbol{\psi}_h^{n+1})-\boldsymbol{B}_h(\boldsymbol{u}_h^{n},\boldsymbol{\psi}_h^{n+1})\Big)\\
			&\quad +\boldsymbol{a}_{1,h}(\boldsymbol{u}_h^{n+1},\boldsymbol{\psi}_h^{n+1})+\boldsymbol{a}_{2,h}(\boldsymbol{u}_h^{n+1},\boldsymbol{\psi}_h^{n+1})-\boldsymbol{B}_h(\bar{F}_{i}(\boldsymbol{u}_{h}^{n},\boldsymbol{u}_{h}^{n+1}),\boldsymbol{\psi}_h^{n+1}) = 0,
		\end{aligned}
	\end{equation}
	where the discrete bilinear forms are given by
	\begin{align*}
		\boldsymbol{B}_h(\boldsymbol{u}_h^{n+1},\boldsymbol{\psi}_h^{n+1})&:=\sum_{K\in \mathcal{T}} m(K) \sum_{i=1}^{5} u_{i,K}^{n+1} \psi_{i,K}^{n+1},\\
		\boldsymbol{a}_{1,h}(\boldsymbol{u}_h^{n+1},\boldsymbol{\psi}_h^{n+1})&:=\frac{1}{2}\sum_{K\in \mathcal{T}}\sum_{L\in \mathcal{N}(K)} \tau_{K|L} \sum_{\substack{i=1\\ i\neq 2}}^{5} d_i(u_{i,L}^{n+1}-u_{i,K}^{n+1})(\psi_{i,L}^{n+1}-\psi_{i,K}^{n+1}),\\
		\boldsymbol{a}_{2,h}(\boldsymbol{u}_h^{n+1},\boldsymbol{\psi}_h^{n+1})&:=-\frac{1}{2}\sum_{K\in \mathcal{T}}\sum_{L\in \mathcal{N}(K)} \tau_{K|L} \mathcal{\tilde{G}}\big(u_{4,K}^{n+1},u_{4,L}^{n+1};\delta \tilde{u}_{1,KL}^{n+1}\big) (\psi_{4,L}^{n+1}-\psi_{4,K}^{n+1}).
	\end{align*}
	Now, we define, by duality, the mapping $\mathbb{P}:X_h\longrightarrow X_h$ given by:
	\begin{equation}
		\begin{aligned}
			\big[\mathbb{P}(\boldsymbol{u}_h^{n+1}),\boldsymbol{\phi}_h\big]&= \dfrac{1}{\Delta t}\Big( \boldsymbol{B}_h(\boldsymbol{u}_h^{n+1},\boldsymbol{\phi}_h)-\boldsymbol{B}_h(\boldsymbol{u}_h^{n},\boldsymbol{\phi}_h)\Big)\\
			&\quad +\boldsymbol{a}_{1,h}(\boldsymbol{u}_h^{n+1},\boldsymbol{\phi}_h)+\boldsymbol{a}_{2,h}(\boldsymbol{u}_h^{n+1},\boldsymbol{\phi}_h)-\boldsymbol{B}_h(\bar{F}_{i}(\boldsymbol{u}_{h}^{n},\boldsymbol{u}_{h}^{n+1}),\boldsymbol{\phi}_h),
		\end{aligned}
	\end{equation}
	for all $\boldsymbol{\phi}_h \in X_h$. It is easy to check that the mapping $\mathbb{P}$ is continuous. Now, we test with the function $\boldsymbol{\phi}_h = \boldsymbol{u}_h^{n+1}$ to get
	\begin{align*}
		\big[\mathbb{P}(\boldsymbol{u}_h^{n+1}),\boldsymbol{u}_h^{n+1} \big] = \mathcal{A}_1+\mathcal{A}_2+\mathcal{A}_3,
	\end{align*}
	where,
	\begin{align*}
		\mathcal{A}_1&= \dfrac{1}{\Delta t}\sum_{K\in \mathcal{T}} m(K) \sum_{i=1}^5 (u_{i,K}^{n+1})^2-\dfrac{1}{\Delta t}\sum_{K\in \mathcal{T}} m(K) \sum_{i=1}^5 u_{i,K}^{n}u_{i,K}^{n+1} + \frac{1}{2}\sum_{K\in \mathcal{T}}\sum_{L\in \mathcal{N}(K)} \tau_{K|L} \sum_{\substack{i=1\\ i\neq 2}}^{5} d_i(u_{i,L}^{n+1}-u_{i,K}^{n+1})^2,\\ 
		\mathcal{A}_2&= -\frac{1}{2}\sum_{K\in \mathcal{T}}\sum_{L\in \mathcal{N}(K)} \tau_{K|L} \mathcal{\tilde{G}}\big(u_{4,K}^{n+1},u_{4,L}^{n+1};\delta \tilde{u}_{1,KL}^{n+1}\big) (u_{4,L}^{n+1}-u_{4,K}^{n+1}),\\
		\mathcal{A}_3&= - \sum_{n=0}^{N}\Delta t\sum_{K\in \mathcal{T}} m(K) \sum_{i\in \mathcal{J}} \bar{F}_{i}(\boldsymbol{u}_{K}^{n},\boldsymbol{u}_{K}^{n+1})u_{i,K}^{n+1}.
	\end{align*}
	Then, by using the definition of $\bar{F}_i$, for $i=1,\dots,5$ given by \eqref{eq:rhs-disc} and Young's inequality we get that
	\begin{align*}
		\mathcal{A}_1&\geq \dfrac{1}{\Delta t} \sum_{i=1}^5 \|u_{i,h}^{n+1}\|_{L^2(\Omega)}^2-\dfrac{1}{2\Delta t} \sum_{i=1}^5 \|u_{i,h}^{n+1}\|_{L^2(\Omega)}^2-\dfrac{1}{2\Delta t}\sum_{i=1}^5 \|u_{i,h}^{n}\|_{L^2(\Omega)}^2\\
		&=\dfrac{1}{2\Delta t} \sum_{i=1}^5 \|u_{i,h}^{n+1}\|_{L^2(\Omega)}^2-\dfrac{1}{2\Delta t}\sum_{i=1}^5 \|u_{i,h}^{n}\|_{L^2(\Omega)}^2.
	\end{align*}
	In addition, we employ the property \eqref{eq:flux-h5} to obtain that
	\begin{align*}
		\mathcal{\tilde{G}}\big(u_{4,K}^{n+1},u_{4,L}^{n+1};\delta \tilde{u}_{1,KL}^{n+1}\big)&\leq |\delta \tilde{u}_{1,KL}^{n+1}| \int_0^{\beta_4} \big| \chi'(s)\big| \, \mathrm{d}s \leq C_1,
	\end{align*}
	where $C_1:=2\beta_1 \int_0^{\beta_4} \big| \chi'(s)\big| \, \mathrm{d}s$. Then, by employing Cauchy-Schwarz inequality we get
	\begin{align*}
		\mathcal{A}_2&\geq -\frac{1}{2}\sum_{K\in \mathcal{T}}\sum_{L\in \mathcal{N}(K)} \tau_{K|L} C_1 |u_{4,L}^{n+1}-u_{4,K}^{n+1}| \geq  -C_2|u_{4,h}^{n+1}|_{1,\mathcal{T}}\geq -C_2 \|\boldsymbol{u}_{h}^{n+1}\|_{X_h},
	\end{align*}
	where $C_2$ is a positive constant. Moreover, we have that
	\begin{align*}
		\mathcal{A}_3&= \sum_{K\in \mathcal{T}} m(K)\Big\{ [\gamma(u_{4,K}^n)+\tau_0](u_{1,K}^{n+1})^2+\tau_p(u_{2,K}^{n+1})^2 \\
		&\quad +[d+r_2u_{1,K}^n+r_1 u_{3,K}^n](u_{3,K}^{n+1})^2 +  \left(\dfrac{\alpha_1 u_{1,K}^n u_{4,K}^{n}}{1+\alpha_2 u_{1,K}^n}+\sigma\right)(u_{4,K}^{n+1})^2+\tau_3 (u_{5,K}^{n+1})^2 \Big\}\\
		&\quad  -\sum_{K\in \mathcal{T}} m(K) \Bigg( r_1 (u_{3,K}^n)^2 u_{1,K}^{n+1}+\gamma(u_{4,K}^n) u_{1,K}^{n} u_{2,K}^{n+1} +\dfrac{\tau_S}{1+C (u_{1,K}^n)^{\nu}} u_{5,K}^n u_{3,K}^{n+1}\\
		&\quad +\dfrac{\alpha_1 \hat{m} u_{1,K}^n}{1+\alpha_2 u_{1,K}^n}u_{4,K}^{n}u_{4,K}^{n+1} +\lambda_{M,K} u_{4,K}^{n+1}+\dfrac{\tau_1 u_{1,K}^n}{1+\tau_2 u_{1,K}^n}u_{4,K}^{n}u_{5,K}^{n+1}\Bigg)\\
		&\geq - \frac{1}{2}\sum_{K\in \mathcal{T}} m(K)\Bigg\{r_1\beta_3 (u_{3,K}^n)^2+r_1\beta_3(u_{1,K}^{n+1})^2 +\gamma_{\max}(u_{1,K}^n)^2+\gamma_{\max}(u_{2,K}^{n+1})^2 +\tau_{S}(u_{5,K}^n)^2+\tau_{S}(u_{3,K}^{n+1})^2\\&\qquad+\left(\frac{\hat{m}\alpha_1}{\alpha_2}+\frac{\tau_1}{\tau_2}\right)(u_{4,K}^{n})^2+\left(\frac{\hat{m}\alpha_1}{\alpha_2}+1\right)(u_{4,K}^{n+1})^2+\|\lambda_M\|_{L^{\infty}(\Omega)}^2+\frac{\tau_1}{\tau_2} (u_{5,K}^{n+1})^2\Bigg\}\\
		&\geq -\frac{r_1\beta_3}{2}\|u_{3,h}^n\|_{L^2(\Omega)}^2 -\left(\frac{r_1\beta_3+\tau_S}{2}\right)\|u_{3,h}^{n+1}\|_{L^2(\Omega)}^2-\frac{\gamma_{\max}}{2}\|u_{1,h}^n\|_{L^2(\Omega)}^2-\frac{\gamma_{\max}}{2}\|u_{2,h}^{n+1}\|_{L^2(\Omega)}^2\\
		&\quad -\dfrac{\tau_S}{2}\|u_{5,h}^{n}\|_{L^2(\Omega)}^2-\frac{1}{2}\left(\frac{\hat{m}\alpha_1}{\alpha_2}+\frac{\tau_1}{\tau_2}\right)\|u_{4,h}^{n}\|_{L^2(\Omega)}^2-\frac{1}{2}\left(\frac{\hat{m}\alpha_1}{\alpha_2}+1\right)\|u_{4,h}^{n+1}\|_{L^2(\Omega)}^2\\
		&\quad -\frac{1}{2}\|\lambda_M\|_{L^{\infty}(\Omega)}^2 m(\Omega)-\frac{\tau_1}{\tau_2}\|u_{5,h}^{n+1}\|_{L^2(\Omega)}^2.
	\end{align*}
	According to the mild restriction \eqref{eq:mild} we get that
	\begin{align*}
		\big[\mathbb{P}(\boldsymbol{u}_h^{n+1}),\boldsymbol{u}_h^{n+1}\big]&\geq \frac{1}{2 \Delta t}\|u_{1,h}^{n+1}\|_{L^2(\Omega)}^2 +\frac{1}{2}\left(\dfrac{1}{\Delta t}-\gamma_{\max} \right)\|u_{2,h}^{n+1}\|_{L^2(\Omega)}^2 +\frac{1}{2}\left[\dfrac{1}{\Delta t}-(r_1\beta_3+\tau_S)\right]\|u_{3,h}^{n+1}\|_{L^2(\Omega)}^2\\
		&\quad+\frac{1}{2}\left[\dfrac{1}{\Delta t}-\left(\frac{\hat{m}\alpha_1}{\alpha_2}+1\right)\right]\|u_{4,h}^{n+1}\|_{L^2(\Omega)}^2+\frac{1}{2}\left(\dfrac{1}{\Delta t}-\frac{\tau_1}{\tau_2}\right)\|u_{5,h}^{n+1}\|_{L^2(\Omega)}^2 \\
		&\quad + \frac{1}{2 \Delta t}\sum_{i=1}^5 |u_{i,h}^{n+1}|_{1,\mathcal{T}}^2
		-\dfrac{1}{2\Delta t}\sum_{i=1}^5 \|u_{i,h}^{n}\|_{L^2(\Omega)}^2-C_2 \|\boldsymbol{u}_{h}^{n+1}\|_{X_h}-C_3\sum_{i=1}^5 \|u_{i,h}^{n}\|_{L^2(\Omega)}^2\\
		&\quad -\frac{1}{2}\|\lambda_M\|_{L^{\infty}(\Omega)}^2 m(\Omega)\\
		&\geq C_4  \|\boldsymbol{u}_{h}^{n+1}\|_{X_h}^2 - C_5 \|\boldsymbol{u}_{h}^{n}\|_{X_h}^2-C_2 \|\boldsymbol{u}_{h}^{n+1}\|_{X_h} -\frac{1}{2}\|\lambda_M\|_{L^{\infty}(\Omega)}^2 m(\Omega),
	\end{align*}
	where $C_i>0$, $i=1,\dots,5$ are constants depending only on $\Delta t$ and on the parameters of the model. We then observe that the right-hand side of the inequality above is a second-order polynomial in $\|\boldsymbol{u}_{h}^{n+1}\|_{X_h}^2$ with a positive leading coefficient. Hence, there exists $k >0$ large enough such that $\big[\mathbb{P}(\boldsymbol{u}_h^{n+1}), \boldsymbol{u}_h^{n+1}\big] > 0$ whenever $\|\boldsymbol{u}_{h}^{n+1}\|_{X_h} =k$. By using Lemma 1.4 in \cite{temam2001navier}, we conclude that there exists $\boldsymbol{u}_{h}^{n+1}\in X_h$ such that $\mathbb{P}(\boldsymbol{u}_{h}^{n+1})=0$, which completes the proof.
\end{proof}

\begin{theorem}[Existence of an admissible solution for all time]\label{thm:exist-fvm}
	Let $\mathcal{T}$ be an admissible discretization of $\Omega$ and $\boldsymbol{u}^{0}\in \mathcal{R}$. Then, the discrete problem \eqref{eq:fvm-1}-\eqref{eq:fvm-5} has a solution $(\boldsymbol{u}_K^{n+1})_{K\in \mathcal{T},n\in \{0,\dots,N\}}$ which is physically admissible, i.e. $\boldsymbol{u}_K^{n+1}\in \mathcal{R}$, for all $K\in \mathcal{T}$, $n\in \{0,\dots,N\}$.
\end{theorem}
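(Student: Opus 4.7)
The plan is a one-step synthesis of Lemmas \ref{lemma:maxprin} and \ref{lemma:exist-truncated}. Lemma \ref{lemma:exist-truncated} provides a family $(\boldsymbol{u}_K^{n+1})_{K\in\mathcal{T},\, n\in\{0,\dots,N\}}$ that solves the \emph{truncated} discrete system, and Lemma \ref{lemma:maxprin} guarantees that every such solution automatically lies in the invariant rectangle $\mathcal{R}$. The only remaining task is to observe that the truncation becomes transparent on $\mathcal{R}$, so this solution also solves the \emph{original} discrete problem \eqref{eq:fvm-1}--\eqref{eq:fvm-5}.

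More explicitly, I would proceed by induction on $n$. The base case $n=0$ is trivial, since $\boldsymbol{u}_K^{0}=\boldsymbol{u}^{0}\in\mathcal{R}$ by hypothesis. Assuming $\boldsymbol{u}_K^{n}\in\mathcal{R}$ for every $K\in\mathcal{T}$, I would invoke Lemma \ref{lemma:exist-truncated} to obtain $(\boldsymbol{u}_K^{n+1})_{K\in\mathcal{T}}$ satisfying the truncated system at time $t_{n+1}$, and then Lemma \ref{lemma:maxprin} to conclude $\boldsymbol{u}_K^{n+1}\in\mathcal{R}$. In particular $u_{1,K}^{n+1}\in[0,\beta_1]$ and $u_{4,K}^{n+1}\in[0,\beta_4]$ for all $K$, so by the definition of the truncation one has $\tilde{u}_{1,K}^{n+1}=u_{1,K}^{n+1}$ and hence $\delta \tilde{u}_{1,KL}^{n+1}=\delta u_{1,KL}^{n+1}$. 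Similarly, since $\tilde{\chi}=\chi$ on $[0,\beta_4]$, the monotone decomposition yields $\tilde{\chi}_{\uparrow}(u_{4,K}^{n+1})=\chi_{\uparrow}(u_{4,K}^{n+1})$ and $\tilde{\chi}_{\downarrow}(u_{4,K}^{n+1})=\chi_{\downarrow}(u_{4,K}^{n+1})$, with the analogous identities at $L$, so that $\tilde{\mathcal{G}}\big(u_{4,K}^{n+1},u_{4,L}^{n+1};\delta\tilde{u}_{1,KL}^{n+1}\big)=\mathcal{G}\big(u_{4,K}^{n+1},u_{4,L}^{n+1};\delta u_{1,KL}^{n+1}\big)$. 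Consequently \eqref{eq:fvm-trunc} reduces to \eqref{eq:fvm-4}, while the remaining equations \eqref{eq:fvm-1}--\eqref{eq:fvm-3} and \eqref{eq:fvm-5} are identical in both systems. This closes the induction and yields the claim.

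There is essentially no analytical obstacle left at this stage; the two preceding lemmas already carry all the weight. The only aspect that deserves attention is checking that each of the ingredients entering \eqref{eq:fvm-trunc}, both the arguments $u_{4,K}^{n+1},u_{4,L}^{n+1}$ of $\tilde{\mathcal{G}}$ and the third slot $\delta\tilde{u}_{1,KL}^{n+1}$, is un-truncated as soon as $\boldsymbol{u}_K^{n+1}\in\mathcal{R}$. This was anticipated in the construction of the truncated flux (see the observation following \eqref{eq:flux-h5} that $\tilde{\mathcal{G}}=\mathcal{G}$ whenever $a,b\in[0,\beta_4]$), so the synthesis is clean and the theorem follows in a few lines.
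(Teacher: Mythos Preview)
Your proposal is correct and follows essentially the same approach as the paper's own proof: induction on $n$, invoking Lemma~\ref{lemma:exist-truncated} for existence of a truncated solution, Lemma~\ref{lemma:maxprin} for admissibility, and then the observation that the truncation is transparent on $\mathcal{R}$ so that \eqref{eq:fvm-trunc} coincides with \eqref{eq:fvm-4}. There is no meaningful difference between your argument and the paper's.
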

\begin{proof}
	Let us prove the theorem by induction on $n$. We see that the solution exists for $t_0$ and is given by $\boldsymbol{u}_K^{0} = \boldsymbol{u}^0|_K$, for all $K\in \mathcal{T}$, which is admissible by hypothesis. Assume that an admissible solution exists for $t=t_n$, then by Lemma \ref{lemma:exist-truncated}, there exists a solution $(\boldsymbol{u}_K^{n+1})_{K\in \mathcal{T}}$ to the truncated discrete problem and by employing Lemma \ref{lemma:maxprin} it follows that $\boldsymbol{u}_K^{n+1}\in \mathcal{R}$, for all $K\in \mathcal{T}$. But, since it is admissible then for all $K\in \mathcal{T}$ and $L\in \mathcal{N}(K)$, we have that $\tilde{u}_{i,K}^{n+1} = u_{i,K}^{n+1}$, $i=1,\dots,5$, and $\mathcal{\tilde{G}}\big(u_{4,K}^{n+1},u_{4,L}^{n+1};\delta \tilde{u}_{1,KL}^{n+1}\big) = \mathcal{G}\big(u_{4,K}^{n+1},u_{4,L}^{n+1};\delta u_{1,KL}^{n+1}\big)$. Therefore $(\boldsymbol{u}_K^{n+1})$ is an admissible solution of the original system \eqref{eq:fvm-1}-\eqref{eq:fvm-5}. The proof is completed for all times $t_n$ ($n \in \mathbb{N}$) by induction.
\end{proof}

\subsection{Discrete estimates}\label{sec:de}
In this section, following arguments analogous to those in \cite{EYMARD2000713,andreianov2011finite,coudiere2013analysis}, we establish the discrete estimates required for proving the convergence of scheme \eqref{eq:fvm-1}-\eqref{eq:fvm-5} towards a weak solution of \eqref{eq:model}. Let us consider the index set $\mathcal{J}:=\{1,3,4,5\}$.
\begin{lemma}\label{lemma:ae}
	Let $(\boldsymbol{u}_K^{n+1})_{K\in \mathcal{T},n\in \{0,\dots,N\}}$ be a solution of \eqref{eq:fvm-1}-\eqref{eq:fvm-5}. Then is a constant $\tilde{C}_1>0$ depending on the $L^2-$norm of the initial condition $\|\boldsymbol{u}_0\|_{L^2(\Omega)}$ and the parameters of the model, such that 
	\begin{align}
		&\dfrac{1}{2}\sum_{n=0}^{N}\Delta t \sum_{K\in \mathcal{T}}\sum_{L\in \mathcal{N}(K)} \tau_{K|L} \sum_{i\in \mathcal{J}}  (u_{i,L}^{n+1}-u_{i,K}^{n+1})^2\leq \tilde{C}_1. \label{eq:ae-bound-2}
	\end{align}
\end{lemma}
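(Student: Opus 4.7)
The plan is to obtain the claimed bound by the standard discrete energy technique: multiply each evolution equation for $i \in \mathcal{J}$ by $\Delta t\, u_{i,K}^{n+1}$, sum over $K \in \mathcal{T}$ and $n \in \{0,\dots,N\}$, and identify the gradient term on the right-hand side of the estimate as the dissipation produced by the diffusion.

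First, I would use the inequality $(a-b)a \ge \tfrac{1}{2}(a^2 - b^2)$ on the discrete time derivative to get a telescoping contribution $\tfrac{1}{2}\|u_{i,h}^{N+1}\|_{L^2(\Omega)}^2 - \tfrac{1}{2}\|u_{i,h}^{0}\|_{L^2(\Omega)}^2$. By Theorem~\ref{thm:exist-fvm} any discrete solution lies in $\mathcal{R}$, so both quantities are uniformly bounded (by $m(\Omega)\,\beta_i^{\,2}$ and by $\|u_{i,0}\|_{L^2(\Omega)}^2$ respectively). Second, discrete integration by parts on the diffusive flux yields, for each $i \in \mathcal{J}$,
\begin{equation*}
-\sum_{K\in\mathcal{T}} \Delta t\, u_{i,K}^{n+1} \sum_{L\in \mathcal{N}(K)} d_i \tau_{K|L}(u_{i,L}^{n+1}-u_{i,K}^{n+1}) = \frac{d_i}{2}\sum_{K\in\mathcal{T}}\sum_{L\in\mathcal{N}(K)} \tau_{K|L}(u_{i,L}^{n+1}-u_{i,K}^{n+1})^2,
\end{equation*}
so that summing over $n$ and over $i\in\mathcal{J}$ produces precisely (a positive multiple of) the left-hand side of \eqref{eq:ae-bound-2}.

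The main work is to absorb the chemotactic contribution appearing when testing \eqref{eq:fvm-4} against $u_{4,K}^{n+1}$, namely
\begin{equation*}
R := \sum_{n=0}^{N}\Delta t \sum_{K\in\mathcal{T}}\sum_{L\in\mathcal{N}(K)} \tau_{K|L}\,\mathcal{G}\bigl(u_{4,K}^{n+1},u_{4,L}^{n+1};\delta u_{1,KL}^{n+1}\bigr)\bigl(u_{4,L}^{n+1}-u_{4,K}^{n+1}\bigr).
\end{equation*}
Here I would use the bound \eqref{eq:flux-h5} (which applies to $\mathcal{G}$ since $\boldsymbol{u}_K^{n+1}\in\mathcal{R}$, so $\mathcal{G}=\tilde{\mathcal{G}}$) to get $|\mathcal{G}(\cdot,\cdot;\delta u_{1,KL}^{n+1})| \le M\,|\delta u_{1,KL}^{n+1}|$ with $M=\int_0^{\beta_4}|\chi'(s)|\,\mathrm{d}s$, and then apply Young's inequality:
\begin{equation*}
|R| \le \frac{d_4}{4}\sum_{n,K,L}\Delta t\,\tau_{K|L}(u_{4,L}^{n+1}-u_{4,K}^{n+1})^2 + \frac{M^2}{d_4}\sum_{n,K,L}\Delta t\,\tau_{K|L}(u_{1,L}^{n+1}-u_{1,K}^{n+1})^2.
\end{equation*}
The first term is absorbed into the $u_4$-diffusion dissipation (leaving a coefficient $d_4/4$). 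The second term is the delicate point: to absorb it I would first test the $u_1$-equation \eqref{eq:fvm-1} against $u_{1,K}^{n+1}$, scaled by a suitable factor $\mu>0$, producing $\mu d_1/2$ times the $u_1$ Dirichlet semi-norm; choosing $\mu$ so that $\mu d_1/2 > M^2/d_4$ (for instance $\mu = 4M^2/(d_1 d_4)$) ensures that the $u_1$ dissipation dominates the cross term. The reaction terms $\bar F_i(\boldsymbol{u}_K^n,\boldsymbol{u}_K^{n+1})$ tested against $u_{i,K}^{n+1}$ are uniformly bounded on $\mathcal{R}$, so after a Young's inequality of the form $ab\le \varepsilon a^2 + C_\varepsilon b^2$ their contribution is controlled by $\sum_n \Delta t \,\|u_{i,h}^{n+1}\|_{L^2}^2 \le (N+1)\Delta t\, m(\Omega)\, \beta_i^2 \le (T+\Delta t)\, m(\Omega)\, \beta_i^2$.

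Collecting all contributions and moving the absorbed terms to the left-hand side, one is left with
\begin{equation*}
\Bigl(\tfrac{1}{2}\min_{i\in\mathcal{J}}d_i - \varepsilon\Bigr)\sum_{n}\Delta t\sum_{K,L}\tau_{K|L}\sum_{i\in\mathcal{J}}(u_{i,L}^{n+1}-u_{i,K}^{n+1})^2 \le C\bigl(\|\boldsymbol{u}_0\|_{L^2(\Omega)}^2, T, \boldsymbol{\beta}, \text{parameters}\bigr),
\end{equation*}
which yields \eqref{eq:ae-bound-2} with a constant $\tilde C_1$ of the claimed form. The main obstacle, as highlighted, is the chemotactic cross-term: all other contributions are either sign-definite, telescoping, or bounded uniformly by the invariance of $\mathcal{R}$ together with the initial $L^2$ bound.
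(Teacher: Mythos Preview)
Your proposal is correct and follows the same discrete energy strategy as the paper: test each equation for $i\in\mathcal{J}$ by $\Delta t\,u_{i,K}^{n+1}$, telescope the time derivative via $(a-b)a\ge\tfrac12(a^2-b^2)$, integrate the diffusion by parts, and control the chemotactic cross-term and the reaction contributions. The only notable differences are in the details: the paper bounds the chemotaxis term through (H3)--(H4) (writing $|\mathcal{G}(a,b;c)|\le L_{\mathcal{G}}|c|(|a|+|b|)\le 2L_{\mathcal{G}}\beta_4|c|$) and an unweighted Young inequality, then simply asserts a positive constant $C_3$ in front of the gradient sum, whereas your weighted testing of the $u_1$-equation by a factor $\mu$ makes the absorption of the $u_1$-gradient cross-term explicit and avoids any implicit smallness relation between $d_1,d_4$ and $L_{\mathcal{G}}\beta_4$. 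Your handling of the reaction terms (uniform boundedness on $\mathcal{R}$, then $\sum_n\Delta t\|u_{i,h}^{n+1}\|_{L^2}^2\le (T+\Delta t)m(\Omega)\beta_i^2$) is also more direct than the paper's explicit expansion of each $\bar F_i$, but leads to the same conclusion.
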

\begin{proof}
	We start the proof by multiplying \eqref{eq:fvm-1}-\eqref{eq:fvm-5} by $\Delta t u_{i,K}^{n+1}$ and adding the result over $i\in \mathcal{J}, K\in \mathcal{T},$ and $n\in \{0,\dots,N\}$ to obtain the identity $X_1+X_2+X_3+X_4=0$, where
	\begin{equation*}
		\begin{aligned}
			X_1 &:= \sum_{n=0}^{N}\sum_{K\in \mathcal{T}} m(K)\sum_{i\in \mathcal{J}} (u_{i,K}^{n+1}-u_{i,K}^n)u_{i,K}^{n+1},\\ 
			X_2 &:= -\sum_{n=0}^{N}\Delta t\sum_{K\in \mathcal{T}} \sum_{L\in \mathcal{N}(K)} \tau_{K|L} \sum_{i\in \mathcal{J}} d_i (u_{i,L}^{n+1}-u_{i,K}^{n+1})u_{i,K}^{n+1},\\
			X_3 &:=   \sum_{n=0}^{N}\Delta t\sum_{K\in \mathcal{T}} \sum_{L\in \mathcal{N}(K)} \tau_{K|L} \mathcal{G}\big(u_{4,K}^{n+1},u_{4,L}^{n+1};\delta u_{1,KL}^{n+1}\big) u_{4,K}^{n+1},\\
			X_4 &:= - \sum_{n=0}^{N}\Delta t\sum_{K\in \mathcal{T}} m(K) \sum_{i\in \mathcal{J}} \bar{F}_{i}(\boldsymbol{u}_{K}^{n},\boldsymbol{u}_{K}^{n+1})u_{i,K}^{n+1}.
		\end{aligned}
	\end{equation*}
	By using the inequality ``$a(a-b)\geq \frac{1}{2}(a^2-b^2)$ for $a,b\in \mathbb{R}$'' we get a lower bound for $X_1$ given by:
	\[X_1\geq \dfrac{1}{2}\sum_{n=0}^{N}\sum_{K\in \mathcal{T}} m(K) \sum_{i\in \mathcal{J}} \Big((u_{i,K}^{n+1})^2-(u_{i,K}^n)^2\Big)=\dfrac{1}{2}\sum_{K\in \mathcal{T}} m(K) \sum_{i\in \mathcal{J}} \Big((u_{i,K}^{N+1})^2-(u_{i,K}^0)^2\Big).\]
	Now, by using the identity $2a(a-b) = a^2-b^2+(a-b)^2$ and gathering the edges we can write
	\begin{align*}
		X_2 &= \dfrac{\Delta t}{2} \sum_{n=0}^{N}\sum_{K\in \mathcal{T}} \sum_{L\in \mathcal{N}(K)} \tau_{K|L} \sum_{i\in \mathcal{J}} d_i \Big((u_{i,L}^{n+1})^2-(u_{i,K}^{n+1})^2+(u_{i,L}^{n+1}-u_{i,K}^{n+1})^2\Big)\\
		&= \dfrac{\Delta t}{2} \sum_{n=0}^{N}\sum_{K\in \mathcal{T}} \sum_{L\in \mathcal{N}(K)} \tau_{K|L} \sum_{i\in \mathcal{J}} d_i (u_{i,L}^{n+1}-u_{i,K}^{n+1})^2.
	\end{align*}
	Then employing summation by parts we get
	\begin{align*}
		X_3 &=  \sum_{n=0}^{N}\Delta t\sum_{K\in \mathcal{T}} \sum_{L\in \mathcal{N}(K)} \tau_{K|L} \mathcal{G}\big(u_{4,K}^{n+1},u_{4,L}^{n+1};\delta u_{1,KL}^{n+1}\big) u_{4,K}^{n+1}\\
		&=-\frac{ 1}{2} \sum_{n=0}^{N}\Delta t\sum_{K\in \mathcal{T}} \sum_{L\in \mathcal{N}(K)} \tau_{K|L} \mathcal{G}\big(u_{4,K}^{n+1},u_{4,L}^{n+1};\delta u_{1,KL}^{n+1}\big)(u_{4,L}^{n+1}-u_{4,K}^{n+1}).
	\end{align*}
	We recall that $\delta u_{1,KL}^{n+1} = u_{1,L}^{n+1}-u_{1,K}^{n+1}$; using in addition the assumptions (H2)-(H4) together with the boundedness of $u_{4,K}^{n+1}$, $K\in \mathcal{T}, n\in \{0,\dots,N\}$, and Young's inequality ``$ab\leq \frac{a^2}{2}+\frac{b^2}{2}$, for $a,b>0$'' we deduce an upper bound for $X_3$ given by
	\begin{align*}
		|X_3|&\leq \frac{1}{2} \sum_{n=0}^{N} \Delta t\sum_{K\in \mathcal{T}} \sum_{L\in \mathcal{N}(K)} \tau_{K|L} \big|\mathcal{G}\big(u_{4,K}^{n+1},u_{4,L}^{n+1};\delta u_{1,KL}^{n+1}\big)\big||u_{4,L}^{n+1}-u_{4,K}^{n+1}|\\
		&\leq \frac{ 1}{2} \sum_{n=0}^{N}\Delta t\sum_{K\in \mathcal{T}} \sum_{L\in \mathcal{N}(K)} \tau_{K|L} L_{\mathcal{G}} |u_{1,L}^{n+1}-u_{1,K}^{n+1}|\big(|u_{4,K}^{n+1}|+|u_{4,L}^{n+1}|\big) |u_{4,L}^{n+1}-u_{4,K}^{n+1}|\\
		&\leq  L_{\mathcal{G}} \beta_4 \sum_{n=0}^{N}\Delta t\sum_{K\in \mathcal{T}} \sum_{L\in \mathcal{N}(K)} \tau_{K|L}|u_{1,L}^{n+1}-u_{1,K}^{n+1}| |u_{4,L}^{n+1}-u_{4,K}^{n+1}|\\
		&\leq  \dfrac{L_{\mathcal{G}} \beta_4}{2} \sum_{n=0}^{N}\Delta t\sum_{K\in \mathcal{T}} \sum_{L\in \mathcal{N}(K)} \tau_{K|L}|u_{1,L}^{n+1}-u_{1,K}^{n+1}|^2+ \dfrac{L_{\mathcal{G}} \beta_4}{2} \sum_{n=0}^{N}\Delta t\sum_{K\in \mathcal{T}} \sum_{L\in \mathcal{N}(K)} \tau_{K|L}|u_{4,L}^{n+1}-u_{4,K}^{n+1}|^2.
	\end{align*}
	Finally, from Lemma \ref{lemma:maxprin} and Young's inequality we get
	\begin{align*}
		X_4 =& - \sum_{n=0}^{N}\Delta t\sum_{K\in \mathcal{T}} m(K) \Bigg\{ \left( r_1 (u_{3,K}^n)^2-\gamma(u_{4,K}^n) u_{1,K}^{n+1}-\tau_0 u_{1,K}^{n+1}\right) u_{1,K}^{n+1}\\
		&\quad +\left(\dfrac{\tau_S}{1+C (u_{1,K}^n)^{\nu}}u_{5,K}^n-du_{3,K}^{n+1}-r_2u_{1,K}^nu_{3,K}^{n+1}-r_1 u_{3,K}^n u_{3,K}^{n+1}\right)u_{3,K}^{n+1}\\
		&\quad+\left(\dfrac{\alpha_1 u_{1,K}^n}{1+\alpha_2 u_{1,K}^n}(\hat{m}-u_{4,K}^{n+1})u_{4,K}^{n}-\sigma u_{4,K}^{n+1}+\lambda_M\right)u_{4,K}^{n+1}+\left. \left(\dfrac{\tau_1 u_{1,K}^n}{1+\tau_2 u_{1,K}^n}u_{4,K}^n-\tau_3 u_{5,K}^{n+1}\right)u_{5,K}^{n+1}\right\}\\
		=& \sum_{n=0}^{N}\Delta t\sum_{K\in \mathcal{T}} m(K)\Big\{ [\gamma(u_{4,K}^n)+\tau_0](u_{1,K}^{n+1})^2+[d+r_2u_{1,K}^n+r_1 u_{3,K}^n](u_{3,K}^{n+1})^2\\
		&\quad +  \left(\dfrac{\alpha_1 u_{1,K}^n u_{4,K}^{n}}{1+\alpha_2 u_{1,K}^n}+\sigma\right)(u_{4,K}^{n+1})^2+\tau_3 (u_{5,K}^{n+1})^2 \Big\}\\
		& - \sum_{n=0}^{N}\Delta t\sum_{K\in \mathcal{T}} m(K) \Bigg( r_1 (u_{3,K}^n)^2 u_{1,K}^{n+1} +\dfrac{\tau_S}{1+C (u_{1,K}^n)^{\nu}} u_{5,K}^n u_{3,K}^{n+1}+\dfrac{\alpha_1 \hat{m} u_{1,K}^n}{1+\alpha_2 u_{1,K}^n}u_{4,K}^{n}u_{4,K}^{n+1}\\
		&\quad  +\lambda_{M,K} u_{4,K}^{n+1}+\dfrac{\tau_1 u_{1,K}^n}{1+\tau_2 u_{1,K}^n}u_{4,K}^{n}u_{5,K}^{n+1}\Bigg)\\
		&\geq -\dfrac{1}{2}\sum_{n=0}^{N}\Delta t\sum_{K\in \mathcal{T}} m(K)\Bigg\{r_1\beta_3 (u_{3,K}^n)^2+r_1\beta_3(u_{1,K}^{n+1})^2 +\tau_{S}(u_{5,K}^n)^2+\tau_{S}(u_{3,K}^{n+1})^2\\
		&\qquad+\left(\frac{\hat{m}\alpha_1}{\alpha_2}+\frac{\tau_1}{\tau_2}\right)(u_{4,K}^{n})^2+\left(\frac{\hat{m}\alpha_1}{\alpha_2}+1\right)(u_{4,K}^{n+1})^2+\lambda_{M,K}^2+\frac{\tau_1}{\tau_2} (u_{5,K}^{n+1})^2\Bigg\}\\
		&\geq-\dfrac{1}{2}\sum_{n=0}^{N}\Delta t\sum_{K\in \mathcal{T}} m(K) \|\lambda_M\|_{L^{\infty}(\Omega)}^2 -C_1 \sum_{K\in \mathcal{T}} m(K)\sum_{i\in \mathcal{J}} (u_{i,K}^0)^2-\dfrac{C_2}{2}\sum_{n=0}^{N}\Delta t\sum_{K\in \mathcal{T}} m(K)\sum_{i\in \mathcal{J}} (u_{i,K}^{n+1})^2
	\end{align*}
	where $C_i>0$ are constants.
	Collecting all the previous inequalities we arrive at
	\begin{equation}\label{eq:ae-1}
		\begin{aligned}
			\dfrac{1}{2}&\sum_{K\in \mathcal{T}} m(K) \sum_{i\in \mathcal{J}} (u_{i,K}^{N+1})^2 + C_3 \sum_{n=0}^{N}\Delta t\sum_{K\in \mathcal{T}} \sum_{L\in \mathcal{N}(K)} \tau_{K|L} \sum_{i\in \mathcal{J}} (u_{i,L}^{n+1}-u_{i,K}^{n+1})^2\\
			&\leq C_4 +\frac{C_2\Delta t}{2}\sum_{n=0}^{N}\sum_{K\in \mathcal{T}} m(K)\sum_{i\in \mathcal{J}} (u_{i,K}^{n+1})^2.
		\end{aligned}
	\end{equation}
	By combining Lemma \ref{lemma:maxprin} with \eqref{eq:ae-1}, one can deduce \eqref{eq:ae-bound-2}.
\end{proof}
\section{Convergence}\label{sec:conver}
In this section we will use the a priori estimates obtained in Section \ref{sec:de} to obtain the convergence of the FV method to a weak solution in the sense of Definition \ref{def:weaksol} of the system \eqref{eq:model}. To do so, let us consider a sequence of admissible meshes $(\mathcal{T}_m)_{m\geq 1}$ of $\Omega$ such that $\mathrm{size}(\mathcal{T}_m)\to 0$, as $m\to +\infty$ and let $(N_m)_{m\geq 1}$ an increasing sequence of integers, so we obtain a sequence of time steps $(\Delta t_m)_{m\geq 1}$ such that $\Delta t_m\to 0$, as $m\to +\infty$. During this section we will employ the notation $\boldsymbol{u}_{h_m} = (u_{1,h_m},\dots,u_{5,h_m})^{\mathrm{T}}$ for a piecewise constant solution of \eqref{eq:fvm-1} defined as \eqref{eq:piece-cons}. 
\subsection{Compactness argument}\label{sec:compact}
Let us define the translated space $Q_{T-\tau}:=\Omega\times (0,T-\tau)$, for all $\tau \in (0,T).$
\begin{lemma}[Time-translate estimates]\label{lemma:time-translate}
	Let $\Delta t_0>0$ small enough. Given a time-step $\Delta t_m \leq \Delta t_0$, then there exists a constant $C>0$ independent of $m$ and $\tau$ such that
	\begin{equation}\label{eq:time-translate}
		\iint_{Q_{T-\tau}} \big| u_{i,h_m}(\boldsymbol{x},t+\tau)-u_{i,h_m}(\boldsymbol{x},t)\big|^2 \, \mathrm{d} \boldsymbol{x}\, \mathrm{d}t\leq C(\tau+\Delta t_m), \quad i=1,3,4,5,
	\end{equation}
	for all $\tau \in (0,T)$.
\end{lemma}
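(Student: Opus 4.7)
The strategy is the classical one for finite volume time-translate estimates (see the references cited in Lemma~\ref{lemma:ae}): write $u_{i,h_m}(\boldsymbol{x},t+\tau)-u_{i,h_m}(\boldsymbol{x},t)$ as a telescoping sum of the discrete time increments, substitute the scheme \eqref{eq:fvm-1}--\eqref{eq:fvm-5} to replace each increment by diffusive, chemotactic and reaction contributions, and bound everything against the a priori controls of Lemmas~\ref{lemma:maxprin} and~\ref{lemma:ae}. Concretely, fix $i\in\mathcal{J}$ and for $t\in(0,T-\tau)$ let $n_0(t),n_1(t)$ be the unique indices with $t\in(t_{n_0},t_{n_0+1}]$ and $t+\tau\in(t_{n_1},t_{n_1+1}]$; set $\chi^n(t,\tau)=\mathbf{1}_{\{n_0<n\leq n_1\}}$ and $\Delta_K(t):=u_{i,K}^{n_1(t)+1}-u_{i,K}^{n_0(t)+1}$. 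The telescoping identity $\Delta_K(t)=\sum_{n=0}^{N}\chi^n(t,\tau)(u_{i,K}^{n+1}-u_{i,K}^n)$, multiplied against itself, gives
\begin{equation*}
\iint_{Q_{T-\tau}}\!\!|u_{i,h_m}(\boldsymbol{x},t+\tau)-u_{i,h_m}(\boldsymbol{x},t)|^2\,\mathrm{d}\boldsymbol{x}\,\mathrm{d}t=\sum_{n=0}^{N}\int_{0}^{T-\tau}\!\!\chi^n(t,\tau)\sum_{K\in\mathcal{T}}m(K)(u_{i,K}^{n+1}-u_{i,K}^n)\Delta_K(t)\,\mathrm{d}t.
\end{equation*}

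Into the inner sum I would substitute the scheme \eqref{eq:fvm-1}--\eqref{eq:fvm-5} and perform a discrete integration by parts across each interface $\sigma_{K|L}$, producing three pieces: a diffusive term $-\Delta t_m\,d_i\sum_{\sigma_{K|L}}\tau_{K|L}(u_{i,L}^{n+1}-u_{i,K}^{n+1})(\Delta_L(t)-\Delta_K(t))$, a reaction term $\Delta t_m\sum_K m(K)\bar F_i(\boldsymbol{u}_K^n,\boldsymbol{u}_K^{n+1})\Delta_K(t)$, and, only when $i=4$, an additional chemotactic term with $\mathcal{G}(u_{4,K}^{n+1},u_{4,L}^{n+1};\delta u_{1,KL}^{n+1})$ in place of the diffusive flux. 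Edgewise Cauchy--Schwarz controls the diffusive contribution by $\Delta t_m\,d_i\,|u_{i,h_m}^{n+1}|_{1,\mathcal{T}}\,|\Delta_h(t)|_{1,\mathcal{T}}$. The chemotactic contribution, via property (H4) and the $L^\infty$ bound $|u_{4,K}^{n+1}|\leq\beta_4$ from Lemma~\ref{lemma:maxprin}, admits the analogous bound $\Delta t_m\,L_{\mathcal{G}}\beta_4\,|u_{1,h_m}^{n+1}|_{1,\mathcal{T}}\,|\Delta_h(t)|_{1,\mathcal{T}}$. The reaction contribution is bounded by $C\Delta t_m\sum_Km(K)|\Delta_K(t)|$ with $C$ depending only on the model parameters and $\beta_1,\dots,\beta_5$.

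The factor $\tau+\Delta t_m$ arises from elementary bookkeeping: the indicator $\chi^n(\cdot,\tau)$ is supported in an interval of length at most $\tau+\Delta t_m$ (after accounting for the two endpoint steps), giving $\int_0^{T-\tau}\chi^n\,\mathrm{d}t\leq \tau+\Delta t_m$, and $\sum_{n=0}^{N}\chi^n(t,\tau)\leq \tau/\Delta t_m+1$. Plugging these into a Cauchy--Schwarz in $n$, and using $|\Delta_h(t)|_{1,\mathcal{T}}^2\leq 2(|u_{i,h_m}^{n_1(t)+1}|_{1,\mathcal{T}}^2+|u_{i,h_m}^{n_0(t)+1}|_{1,\mathcal{T}}^2)$ together with the discrete $L^2(0,T;H^1)$ control \eqref{eq:ae-bound-2} of Lemma~\ref{lemma:ae}, absorbs the diffusive and chemotactic factors and yields an overall bound of the form $C(\tau+\Delta t_m)$; the reaction piece is handled more simply by $|\Delta_K(t)|\leq 2\max_j\beta_j$ from Lemma~\ref{lemma:maxprin}, which produces a bound of order $\tau+\Delta t_m$ as well.

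The main obstacle is the chemotactic term in the $i=4$ equation, which is not diagonal in $u_4$ but couples it to the discrete gradient of $u_1$. The point is that Lemma~\ref{lemma:ae} provides precisely the discrete $L^2(0,T;H^1)$ control of $u_1$ needed to dominate the $|\delta u_{1,KL}^{n+1}|$ factor brought out by (H4); a double Cauchy--Schwarz, first over edges and then over time levels, closes the estimate simultaneously against the discrete $H^1$-energies of $u_1$ and $u_4$, with no additional smallness on $\Delta t_m$ beyond \eqref{eq:mild}.
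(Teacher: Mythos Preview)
Your proposal is correct and follows essentially the same route as the paper's proof: telescoping the time increment, substituting the scheme, performing a discrete integration by parts over edges, and closing against Lemma~\ref{lemma:maxprin} and the discrete $L^2(0,T;H^1)$ bound of Lemma~\ref{lemma:ae} via the counting identities for the indicator $\chi^n$. The only cosmetic difference is that the paper applies Young's inequality at the edge level to split directly into five square pieces $\mathcal{C}_{1,m},\dots,\mathcal{C}_{5,m}$ (diffusive, the two $n_0$/$n_1$ gradient terms, the chemotactic $|\delta u_{1,KL}^{n+1}|^2$ term, and the reaction term) and integrates each in $t$ separately, whereas you first apply Cauchy--Schwarz edgewise to obtain products of seminorms and then Cauchy--Schwarz in $n$; both arrangements yield the same $C(\tau+\Delta t_m)$ bound.
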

\begin{proof}
	We focus on proving the estimate \eqref{eq:time-translate} for $i=4$; the proof is analogous for $i=1,3,5$. Let us introduce the quantity 
	\begin{equation}
		\mathcal{C}_m(t) = \int_{\Omega} \big|u_{4,h_m}(\boldsymbol{x},t+\tau)-u_{4,h_m}(\boldsymbol{x},t) \big|^2 \, \mathrm{d}\boldsymbol{x}, \quad \text{ for all }t\in (0,T-\tau).
	\end{equation}
	We set $n_0(t) = [t/\Delta t_m]$ and $n_1(t) = [(t+\tau)/\Delta t_m]$, where $[x] = n$ for $x\in [n,n+1),$ $n\in \mathbb{N}$. We can then  rewrite $\mathcal{C}_m(t)$ as
	\begin{align*}
		\mathcal{C}_m(t) &= \sum_{K\in \mathcal{T}_m} m(K) \big(u_{4,K}^{n_1(t)}- u_{4,K}^{n_0(t)}\big)^2\leq  \sum_{K\in \mathcal{T}_m} \Big( \big(u_{4,K}^{n_1(t)}- u_{4,K}^{n_0(t)}\big)\times \sum_{t\leq n \Delta t_m<t+\tau} m(K)(u_{4,K}^{n+1}-u_{4,K}^n) \Big).
	\end{align*}
	Using the equation \eqref{eq:fvm-4}, Lemma \ref{lemma:maxprin}, summation by parts, along with the weighted Young inequality, we get
	\begin{align*}
		\mathcal{C}_m(t)&\leq \sum_{t\leq n \Delta t_m<t+\tau} \Delta t_m \sum_{K\in \mathcal{T}_m} \big(u_{4,K}^{n_1(t)}- u_{4,K}^{n_0(t)}\big) \Bigg(\sum_{L\in \mathcal{N}(K)} \tau_{K|L}\Big[ d_4 (u_{4,L}^{n+1}-u_{4,K}^{n+1}) +\mathcal{G}\big(u_{4,K}^{n+1},u_{4,L}^{n+1};\delta u_{1,KL}^{n+1}\big)\Big]\Bigg)\\
		&\quad + \sum_{t\leq n \Delta t_m<t+\tau} \Delta t_m \sum_{K\in \mathcal{T}_m} m(K)\Big(u_{4,K}^{n_1(t)}- u_{4,K}^{n_0(t)} \Big)\Big(\dfrac{\alpha_1 u_{1,K}^n}{1+\alpha_2 u_{1,K}^n}(\hat{m}-u_{4,K}^{n+1})u_{4,K}^{n} -\sigma u_{4,K}^{n+1}+\lambda_M\Big)\\
		&\leq \dfrac{1}{2}\sum_{t\leq n \Delta t_m<t+\tau} \Delta t_m \sum_{K\in \mathcal{T}_m} \sum_{L\in \mathcal{N}(K)} \tau_{K|L} \Bigg\{ \Big[  d_4 (u_{4,L}^{n+1}-u_{4,K}^{n+1})+\mathcal{G}\big(u_{4,K}^{n+1},u_{4,L}^{n+1};\delta u_{1,KL}^{n+1}\big)\Big]\\
		&\quad \times \Big[(u_{4,K}^{n_1(t)}-u_{4,L}^{n_1(t)})-(u_{4,K}^{n_0(t)}-u_{4,L}^{n_0(t)}) \Big]\Bigg\}\\
		&\quad + \sum_{t\leq n \Delta t_m<t+\tau} \Delta t_m \sum_{K\in \mathcal{T}_m} m(K)\Big(u_{4,K}^{n_1(t)}- u_{4,K}^{n_0(t)} \Big)\Big(\dfrac{\alpha_1 u_{1,K}^n}{1+\alpha_2 u_{1,K}^n}(\hat{m}-u_{4,K}^{n+1})u_{4,K}^{n} -\sigma u_{4,K}^{n+1}+\lambda_M\Big)\\
		&\leq \mathcal{C}_{1,m}(t)+\mathcal{C}_{2,m}(t)+\mathcal{C}_{3,m}(t)+\mathcal{C}_{4,m}(t)+\mathcal{C}_{5,m}(t),
	\end{align*}
	where we have defined,
	\begin{equation}
		\begin{aligned}
			\mathcal{C}_{1,m}(t) &= d_4\sum_{t\leq n \Delta t_m<t+\tau} \Delta t_m \sum_{K\in \mathcal{T}_m}\sum_{L\in \mathcal{N}(K)}\tau_{K|L}  \big|u_{4,L}^{n+1}-u_{4,K}^{n+1}\big|^2,\\
			\mathcal{C}_{2,m}(t) &= \Big(\frac{d_4}{2}+L_{\mathcal{G}}\beta_4\Big)\sum_{t\leq n \Delta t_m<t+\tau} \Delta t_m \sum_{K\in \mathcal{T}_m}\sum_{L\in \mathcal{N}(K)}\tau_{K|L}\big|u_{4,L}^{n_1(t)}-u_{4,K}^{n_1(t)}\big|^2,\\
			\mathcal{C}_{3,m}(t) &= \Big(\frac{d_4}{2}+L_{\mathcal{G}}\beta_4\Big)\sum_{t\leq n \Delta t_m<t+\tau} \Delta t_m \sum_{K\in \mathcal{T}_m}\sum_{L\in \mathcal{N}(K)}\tau_{K|L}\big|u_{4,L}^{n_0(t)}-u_{4,K}^{n_0(t)}\big|^2,\\
			\mathcal{C}_{4,m}(t) &= 2L_{\mathcal{G}}\beta_4 \sum_{t\leq n \Delta t_m<t+\tau} \Delta t_m \sum_{K\in \mathcal{T}_m}\sum_{L\in \mathcal{N}(K)}\tau_{K|L}  \big|u_{1,L}^{n+1}-u_{1,K}^{n+1}\big|^2,\\
			\mathcal{C}_{5,m}(t) &= \Big[\dfrac{\alpha_1}{\alpha_2} \hat{m}(2\hat{m}+\sigma)+\lambda_M\Big] \sum_{t\leq n \Delta t_m<t+\tau} \Delta t_m \sum_{K\in \mathcal{T}_m} m(K)\big| u_{4,K}^{n_1(t)}-u_{4,K}^{n_0(t)}\big|.
		\end{aligned}
	\end{equation}
	We now introduce the characteristic function $\zeta$ defined by $\zeta(n,t_1,t_2) = 1$, if $t_1<(n+1) \Delta t_m\leq t_2$ and $\zeta(n,t_1,t_2)=0$ otherwise. Then, for any sequence $(a^n)_{n\in \mathbb{N}}$ of non-negative numbers we have that
	\begin{equation}\label{eq:time-tras-ineq-1}
		\int_{0}^{T-\tau} \sum_{t\leq n \Delta t_m<t+\tau} a^n\, \mathrm{d}t\leq \sum_{n=0}^{\big[\frac{T}{\Delta t_m}\big]} a^n \int_0^{T-\tau} \zeta(n,\tau,t+\tau)\, \mathrm{d}t\leq \tau \sum_{n=0}^{\big[\frac{T}{\Delta t_m}\big]} a^n,
	\end{equation}
	and for any $\xi \in [0,\tau]$
	\begin{equation}\label{eq:time-tras-ineq-2}
		\int_0^{T-\tau} \sum_{t\leq n \Delta t_m<t+\tau} a^{[(t+\xi)/\Delta t_m]}\, \mathrm{d}t \leq \tau \sum_{n=0}^{\big[\frac{T}{\Delta t_m}\big]} a^n.
	\end{equation}
	From \eqref{eq:time-tras-ineq-1} we deduce that
	\begin{align*}
		\int_0^{T-\tau} \mathcal{C}_{1,m}(t)\, \mathrm{d} t &\leq  \sum_{n=0}^{N_m} \Delta t_m \int_0^{T-\tau} \zeta(n,t,t+\tau) \sum_{K\in \mathcal{T}_m}\sum_{L\in \mathcal{N}(K)}\tau_{K|L}  \big|u_{4,L}^{n+1}-u_{4,K}^{n+1}\big|^2\, \mathrm{d}t\\
		&\leq \tau  \sum_{n=0}^{N_m} \Delta t_m \sum_{K\in \mathcal{T}_m}\sum_{L\in \mathcal{N}(K)}\tau_{K|L}  \big|u_{4,L}^{n+1}-u_{4,K}^{n+1}\big|^2.
	\end{align*}
	In view of the discrete estimate \eqref{eq:ae-bound-2}, there exists a constant $C>0$ such that
	\begin{equation}\label{eq:time-tras-ineq-3}
		\int_0^{T-\tau} \mathcal{C}_{1,m}(t)\, \mathrm{d} t  \leq \tau C.
	\end{equation}
	Following the same lines yields
	\begin{equation}\label{eq:time-tras-ineq-4}
		\int_0^{T-\tau} \mathcal{C}_{4,m}(t)\, \mathrm{d} t  \leq \tau C,
	\end{equation}
	for some constant $C>0$. Next, we use \eqref{eq:time-tras-ineq-2} with $\xi = \tau$ for $\mathcal{A}_{2,m}(t)$, and with  $\xi = 0$ for $\mathcal{A}_{3,m}(t)$ to obtain
	\begin{align*}
		\int_0^{T-\tau} \mathcal{C}_{2,m}(t)\, \mathrm{d} t \leq \tau \sum_{n=0}^{N_m} \Delta t_m \sum_{K\in \mathcal{T}_m}\sum_{L\in \mathcal{N}(K)}\tau_{K|L}  \big|u_{4,L}^{n+1}-u_{4,K}^{n+1}\big|^2,
	\end{align*}
	and,
	\begin{align*}
		\int_0^{T-\tau} \mathcal{C}_{3,m}(t)\, \mathrm{d} t \leq \tau \sum_{n=0}^{N_m} \Delta t_m \sum_{K\in \mathcal{T}_m}\sum_{L\in \mathcal{N}(K)}\tau_{K|L}  \big|u_{4,L}^{n+1}-u_{4,K}^{n+1}\big|^2.
	\end{align*}
	Then, we employ \eqref{eq:ae-bound-2} to deduce that
	\begin{equation}
		\int_0^{T-\tau} \mathcal{C}_{2,m}(t)\, \mathrm{d} t\leq \tau C, \quad \int_0^{T-\tau} \mathcal{C}_{3,m}(t)\, \mathrm{d} t \leq \tau C.
	\end{equation}
	Finally, if $\hat{C} = \frac{\alpha_1}{\alpha_2} \hat{m}(2\hat{m}+\sigma)+\lambda_M$, then by using \eqref{eq:time-tras-ineq-2} with $\xi = \tau$ and $\xi = 0$, and the Lemma \ref{lemma:maxprin} we get
	\begin{equation}
		\begin{aligned}
			\int_0^{T-\tau} \mathcal{C}_{5,m}(t)\, \mathrm{d} t &\leq \hat{C} \int_0^{T-\tau} \sum_{t\leq n \Delta t_m<t+\tau} \Delta t_m \sum_{K\in \mathcal{T}_m} m(K)\Big(\big| u_{4,K}^{n_1(t)}\big|+\big| u_{4,K}^{n_0(t)}\big|\Big) \, \mathrm{d} t\\
			&\leq \tau\hat{C}  \sum_{n=0}^{N_m} \Delta t_m \sum_{K\in \mathcal{T}_m} m(K)\Big(\big| u_{4,K}^{n+1}\big|+\big| u_{4,K}^{n+1}\big|\Big)\\
			&\leq \tau 2 \beta_4 \hat{C}   \sum_{n=0}^{N_m} \Delta t_m \sum_{K\in \mathcal{T}_m} m(K)\\
			&\leq \tau C,
		\end{aligned}
	\end{equation}
	for some constant $C>0$. This ends the proof of the time translate estimate.
\end{proof}
\begin{lemma}[Space translate estimates]\label{lemma:space-translate}
	Let $\Delta t_0>0$ small enough. Given a time-step $\Delta t_m \leq \Delta t_0$, then there exists a constant $C>0$ independent of $m$ and $\tau$ such that, for all $\boldsymbol{y}\in \mathbb{R}^2$,
	\begin{equation}\label{eq:space-translate}
		\int_0^{T} \int_{\Omega_{\boldsymbol{y}}} \big|u_{i,h_m}(\boldsymbol{x}+\boldsymbol{y},t)-u_{i,h_m}(\boldsymbol{x},t) \big|^2 \, \mathrm{d} \boldsymbol{x}\, \mathrm{d}t \leq C \| \boldsymbol{y}\|_{2}\big( \|\boldsymbol{y}\|_{2}+2(K_{\Omega}-1)h_m\big),
	\end{equation}
	for $i=1,3,4,5$, where $\Omega_{\boldsymbol{y}} = \{\boldsymbol{x}\in \Omega: \boldsymbol{x}+\boldsymbol{y}\in \Omega\}$ and $K_{\Omega}$ is the numbers of sides of $\Omega$.
\end{lemma}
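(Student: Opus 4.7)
The plan is to follow the classical space-translate argument for admissible finite volume meshes, essentially reproducing the strategy of Lemma 9.3 in \cite{EYMARD2000713} and adapting it in the same spirit as in \cite{andreianov2011finite,coudiere2013analysis}. Since the argument uses only the discrete $H^1$-type bound \eqref{eq:ae-bound-2}, it suffices to treat the case $i=4$; the cases $i=1,3,5$ are identical. Fix $\boldsymbol{y}\in\mathbb{R}^2$ and $t\in(0,T)$, and set $n:=[t/\Delta t_m]$ so that $u_{4,h_m}(\cdot,t)|_K = u_{4,K}^{n+1}$ for every $K\in\mathcal{T}_m$.

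The first step is to rewrite $u_{4,h_m}(\boldsymbol{x}+\boldsymbol{y},t)-u_{4,h_m}(\boldsymbol{x},t)$ as a telescoping sum of interface jumps along the segment $[\boldsymbol{x},\boldsymbol{x}+\boldsymbol{y}]$. Introducing, for every interior edge $\sigma=\sigma_{K|L}\in\mathcal{E}$, the indicator $\chi_\sigma(\boldsymbol{x},\boldsymbol{y})$ which equals $1$ when the segment crosses $\sigma$ transversally and $0$ otherwise, we obtain
\begin{equation*}
u_{4,h_m}(\boldsymbol{x}+\boldsymbol{y},t)-u_{4,h_m}(\boldsymbol{x},t) \;=\; \sum_{\sigma=\sigma_{K|L}} c_\sigma(\boldsymbol{x},\boldsymbol{y})\,\chi_\sigma(\boldsymbol{x},\boldsymbol{y})\,\bigl(u_{4,L}^{n+1}-u_{4,K}^{n+1}\bigr),
\end{equation*}
with $c_\sigma\in\{-1,+1\}$ fixing the orientation. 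Applying Cauchy--Schwarz with weights $d_{K|L}$ gives
\begin{equation*}
\bigl|u_{4,h_m}(\boldsymbol{x}+\boldsymbol{y},t)-u_{4,h_m}(\boldsymbol{x},t)\bigr|^2 \;\leq\; \Bigl(\sum_\sigma \chi_\sigma(\boldsymbol{x},\boldsymbol{y})\,d_{K|L}\Bigr)\Bigl(\sum_\sigma \chi_\sigma(\boldsymbol{x},\boldsymbol{y})\,\tfrac{(u_{4,L}^{n+1}-u_{4,K}^{n+1})^2}{d_{K|L}}\Bigr).
\end{equation*}
A geometric argument shows that the first factor is bounded by the total length of the portion of the segment contained in $\Omega$, which, because the polygonal boundary $\partial\Omega$ has $K_\Omega$ sides, is at most $\|\boldsymbol{y}\|_2 + (K_\Omega-1)h_m$.

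The second step is to integrate in $\boldsymbol{x}\in\Omega_{\boldsymbol{y}}$ and use the standard swept-volume estimate $\int_{\Omega_{\boldsymbol{y}}}\chi_\sigma(\boldsymbol{x},\boldsymbol{y})\,\mathrm{d}\boldsymbol{x} \leq m(\sigma)\,|\boldsymbol{y}\cdot\boldsymbol{n}_{K|L}|\leq m(\sigma)\|\boldsymbol{y}\|_2$. Combining this with the pointwise bound on the first factor yields
\begin{equation*}
\int_{\Omega_{\boldsymbol{y}}}\bigl|u_{4,h_m}(\boldsymbol{x}+\boldsymbol{y},t)-u_{4,h_m}(\boldsymbol{x},t)\bigr|^2\,\mathrm{d}\boldsymbol{x} \;\leq\; \|\boldsymbol{y}\|_2\bigl(\|\boldsymbol{y}\|_2+2(K_\Omega-1)h_m\bigr)\sum_{\sigma=\sigma_{K|L}}\tau_{K|L}\,(u_{4,L}^{n+1}-u_{4,K}^{n+1})^2,
\end{equation*}
after appropriately accounting for the symmetry in the double sum over $(K,L)$ which generates the factor $2$ in front of $(K_\Omega-1)h_m$. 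Integrating in $t\in(0,T)$ (which merely produces a discrete sum $\sum_n \Delta t_m$ with bounded total) and invoking \eqref{eq:ae-bound-2} from Lemma \ref{lemma:ae} finishes the proof with $C$ depending on $d_4$, the $L^2$-norm of $\boldsymbol{u}_0$, and the parameters of the model.

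The main obstacle I anticipate is the careful bookkeeping in the boundary contribution: the segment $[\boldsymbol{x},\boldsymbol{x}+\boldsymbol{y}]$ may leave and re-enter $\Omega$ several times when $\Omega$ is non-convex, so justifying rigorously the bound $\|\boldsymbol{y}\|_2+(K_\Omega-1)h_m$ on the swept length requires partitioning the segment by its intersections with $\partial\Omega$ and controlling the number of such intersections by $K_\Omega$. The rest of the argument is purely algebraic once this geometric lemma is in place, and the discrete $H^1$-estimate from Section \ref{sec:de} supplies the remaining ingredient.
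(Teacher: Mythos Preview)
Your proposal is correct and follows exactly the same route as the paper, which simply invokes Lemma~9.3 of \cite{EYMARD2000713} to obtain the bound
\[
\int_0^{T}\!\!\int_{\Omega_{\boldsymbol{y}}}\big|u_{i,h_m}(\boldsymbol{x}+\boldsymbol{y},t)-u_{i,h_m}(\boldsymbol{x},t)\big|^2\,\mathrm{d}\boldsymbol{x}\,\mathrm{d}t \le \|\boldsymbol{y}\|_2\bigl(\|\boldsymbol{y}\|_2+2(K_\Omega-1)h_m\bigr)\sum_{n}\Delta t_m\sum_{K}\sum_{L\in\mathcal{N}(K)}\tau_{K|L}|u_{i,L}^{n+1}-u_{i,K}^{n+1}|^2
\]
and then applies the discrete estimate \eqref{eq:ae-bound-2}. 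Your sketch in fact supplies more of the geometric detail (telescoping across interfaces, Cauchy--Schwarz with weights $d_{K|L}$, and the swept-volume bound) than the paper's own proof, which leaves all of this to the cited reference.
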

\begin{proof}
	To establish this result, we follow the approach of Lemma~9.3 in Eymard et al.~\cite{EYMARD2000713}, to obtain
	\begin{align*}
		\int_0^{T} \int_{\Omega_{\boldsymbol{y}}}& \big|u_{i,h_m}(\boldsymbol{x}+\boldsymbol{y},t)-u_{i,h_m}(\boldsymbol{x},t) \big|^2 \, \mathrm{d} \boldsymbol{x}\, \mathrm{d}t \\
		&\leq \| \boldsymbol{y}\|_{2}\big( \|\boldsymbol{y}\|_{2}+2(K_{\Omega}-1)h_m\big) \sum_{n=0}^{N_m} \Delta t_m \sum_{K\in \mathcal{T}_m}\sum_{L\in \mathcal{N}(K)}\tau_{K|L}  \big|u_{i,L}^{n+1}-u_{i,K}^{n+1}\big|^2,
	\end{align*}
	for $i=1,3,4,5$. By using the estimate \eqref{eq:ae-bound-2}, one can obtain the space translate estimate \eqref{eq:space-translate}. This ends the proof of the lemma.
\end{proof}
\subsection{Convergence analysis}\label{sec:conv-ana}
\begin{theorem}[Convergence Towards an Admissible Weak Solution]\label{thm:conv}
	Assume that $\boldsymbol{u}_0 = (u_{1,0},\dots,u_{5,0})^{\mathrm{T}}$ is such that $u_{i,0}\in H^1(\Omega)$, for $i=1,3,4,5$ and $\boldsymbol{u}_0(\boldsymbol{x})\in \mathcal{R}$, for a.e. $\boldsymbol{x}\in \Omega$. Let $(\boldsymbol{u}_{h_m})_{m\geq 1}$ be a family of solutions such that $h_m\to 0$, as $m\to \infty$. There exists $u_{i}\in L^2(0,T;H^1(\Omega))$, $i=1,3,4,5$ and $u_2\in L^2(\Omega_T)$ such that, up to a subsequence
	\begin{enumerate}
		\item[\textit{a)}] $u_{i,h_m} \longrightarrow u_{i}$ strongly in $L^p(\Omega_T)$ and a.e. in $\Omega_T$ as $m\to \infty$, for all $1\leq p<\infty$, $i=1,3,4,5$.
		\item[\textit{b)}] $u_{2,h_m} \rightharpoonup u_{2}$ weakly in $L^2(\Omega_T)$ as $m\to \infty$.
		\item[\textit{c)}] $\nabla_{\mathcal{T}_m} u_{i,h_m} \rightharpoonup \nabla u_{i}$, weakly in $[L^2(\Omega_T)]^2$ as $m\to \infty$, for $i=1,3,4,5$.
		\item[\textit{d)}] Moreover, the limit $\boldsymbol{u}=(u_{1},\dots,u_{5})^{\mathrm{T}}$ is a weak solution (in the sense of Definition \ref{def:weaksol}) of the problem \eqref{eq:model}.
	\end{enumerate}
\end{theorem}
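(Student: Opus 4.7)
The plan is to combine the uniform $L^{\infty}$ bound from Lemma \ref{lemma:maxprin}, the discrete $H^{1}$-type estimate \eqref{eq:ae-bound-2}, and the translate estimates of Lemmas \ref{lemma:time-translate}--\ref{lemma:space-translate} with a Kolmogorov--Riesz--Fréchet compactness argument in order to extract strongly convergent subsequences of $(u_{i,h_m})$ for $i\in \mathcal{J}=\{1,3,4,5\}$, and then to pass to the limit in the weak formulation of the scheme. Concretely, I would first fix $i\in \mathcal{J}$ and note that $(u_{i,h_m})_m$ is uniformly bounded in $L^{\infty}(\Omega_T)\subset L^{2}(\Omega_T)$, with uniformly small space- and time-translates (after the customary extension by zero outside $\Omega_T$, plus a boundary-layer estimate controlled by $h_m$). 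Kolmogorov's compactness criterion then provides $u_i\in L^{2}(\Omega_T)$ such that, along a subsequence, $u_{i,h_m}\to u_i$ strongly in $L^{2}(\Omega_T)$ and a.e., and interpolation with the $L^{\infty}$ bound upgrades this to strong convergence in every $L^{p}(\Omega_T)$, $1\le p<\infty$, proving (a). For $u_2$ there is no gradient estimate, but Lemma \ref{lemma:maxprin} gives $0\le u_{2,h_m}\le \beta_2$, so weak-$*$ $L^{\infty}$ compactness yields (b). For (c), the estimate \eqref{eq:ae-bound-2} bounds $\nabla_{\mathcal{T}_m}u_{i,h_m}$ in $[L^{2}(\Omega_T)]^2$; weak compactness then gives $\nabla_{\mathcal{T}_m}u_{i,h_m}\rightharpoonup \boldsymbol{v}_i$, and using the standard duality identification on admissible meshes (as in Lemma 9.3 of \cite{EYMARD2000713}), $\boldsymbol{v}_i=\nabla u_i$ and $u_i\in L^{2}(0,T;H^{1}(\Omega))$.

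To prove (d), I would multiply each discrete equation \eqref{eq:fvm-1}--\eqref{eq:fvm-5} by $\Delta t\, \psi_i(\boldsymbol{x}_K,t_n)$ for $\psi_i\in \mathcal{D}(\overline{\Omega}\times [0,T))$ and sum over $K\in \mathcal{T}_m$ and $n$. For the accumulation terms, a discrete Abel summation by parts transforms the telescoping sum into $-\iint_{\Omega_T}u_{i,h_m}\,\partial_t\psi_i^{(m)}\,\mathrm{d}\boldsymbol{x}\,\mathrm{d}t - \int_{\Omega} u_{i,h_m}(\cdot,0)\psi_i^{(m)}(\cdot,0)\,\mathrm{d}\boldsymbol{x}$, where $\psi_i^{(m)}$ is the discrete reconstruction of $\psi_i$; regularity of $\psi_i$ plus strong (or weak for $i=2$) convergence of $u_{i,h_m}$ passes this to the limit. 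For the diffusion terms, rewrite $-d_i\sum_{K,L}\tau_{K|L}(u_{i,L}^{n+1}-u_{i,K}^{n+1})\psi_i(\boldsymbol{x}_K,t_n)$ by symmetrizing over edges as a discrete integral of $d_i\nabla_{\mathcal{T}_m}u_{i,h_m}\cdot \nabla_{\mathcal{T}_m}\psi_i^{(m)}$, and use (c) together with strong convergence of $\nabla_{\mathcal{T}_m}\psi_i^{(m)}\to \nabla \psi_i$ to obtain $\iint d_i\nabla u_i\cdot \nabla\psi_i$. The reaction terms $\bar{F}_i(\boldsymbol{u}_K^{n},\boldsymbol{u}_K^{n+1})$ mix levels $n$ and $n+1$, but since $\Delta t_m\to 0$ and the $L^{\infty}$ bound gives dominated convergence, both piecewise constant reconstructions converge a.e.\ to the same limit $\boldsymbol{u}$; Lipschitz continuity of $\gamma$ and of the rational nonlinearities then yields $\bar{F}_i(\boldsymbol{u}_{h_m}^{n},\boldsymbol{u}_{h_m}^{n+1})\to F_i(\boldsymbol{u})$ strongly in $L^{2}$, which handles the right-hand sides.

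The main obstacle, as is usual for chemotaxis FV schemes, is passing to the limit in the nonlinear convective flux
\begin{equation*}
  \sum_{n}\Delta t\sum_{K,L}\tau_{K|L}\,\mathcal{G}\bigl(u_{4,K}^{n+1},u_{4,L}^{n+1};\delta u_{1,KL}^{n+1}\bigr)\,\psi_4(\boldsymbol{x}_K,t_n),
\end{equation*}
which should converge to $-\iint_{\Omega_T}\chi(u_4)\nabla u_1\cdot \nabla\psi_4$. Here I would follow the strategy of Andreianov--Bendahmane--Saad \cite{andreianov2011finite}: after symmetrization, split $\mathcal{G}(u_{4,K}^{n+1},u_{4,L}^{n+1};\cdot)$ as $\mathcal{G}(u_{4,K}^{n+1},u_{4,K}^{n+1};\cdot)$, which equals $\chi(u_{4,K}^{n+1})\delta u_{1,KL}^{n+1}$ by (H3), plus an error bounded via (H4) by $L_{\mathcal{G}}|\delta u_{1,KL}^{n+1}|\,|u_{4,L}^{n+1}-u_{4,K}^{n+1}|$; the error term goes to zero by Cauchy--Schwarz combined with \eqref{eq:ae-bound-2} and the strong $L^{2}$ convergence of $u_{4,h_m}$ (which forces the discrete $H^{1}$ jumps of $u_4$ to vanish in the relevant inner products). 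The principal part is then a weak-times-strong product of $\nabla_{\mathcal{T}_m}u_{1,h_m}\rightharpoonup \nabla u_1$ against $\chi(u_{4,h_m})\to \chi(u_4)$ strongly in $L^{2}$, giving the desired limit. Combining all these ingredients shows that $(u_1,\dots,u_5)$ satisfies \eqref{eq:weak-u1}--\eqref{eq:weak-u5}, and the $\mathcal{R}$-valuedness follows a.e.\ from Lemma \ref{lemma:maxprin}.
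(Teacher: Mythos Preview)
Your proposal is correct and follows essentially the same route as the paper: Kolmogorov compactness via the translate estimates for (a), $L^\infty$ boundedness for (b), weak compactness of discrete gradients plus the duality identification for (c), and for (d) the Andreianov--Bendahmane--Saad splitting of the chemotaxis flux via (H3)--(H4), with the consistency error controlled by Cauchy--Schwarz and \eqref{eq:ae-bound-2}. One small sharpening: the vanishing of the flux error is not driven by the strong $L^2$ convergence of $u_{4,h_m}$ per se, but rather by the fact that $\|\bar u_{4,h_m}-\underline{u}_{4,h_m}\|_{L^2(\Omega_T)}\to 0$, which follows directly from the discrete $H^1$ bound \eqref{eq:ae-bound-2} together with $h_m\to 0$ (each diamond has measure $\sim d_{K|L}^2\,\tau_{K|L}$); the paper makes exactly this observation before applying Cauchy--Schwarz against the bounded $\nabla_{\mathcal{T}_m}u_{1,h_m}$.
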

\begin{proof}
	We start by showing the convergence \textit{a)} for $i=4$. We apply Kolmogorov’s compactness criterion (see \cite{EYMARD2000713}, Theorem 14.1) as a tool to analyze the sequence $(u_{4,h_m})_{m\in \mathbb{N}}$. To do so, we define $N = 3$, $q= 2$, and $p(u_{4,h_m})=\hat{u}_{4,h_m}$, where $\hat{u}_{4}$ is defined by $\hat{u}_{4,h_m} = u_{4,h_m}$ within $\Omega_T$ and $\hat{u}_{4,h_m}=0$ outside of $\Omega_T$. The first condition of Kolmogorov’s compactness criterion is guaranteed by definition of $\hat{u}_{4}$ and the second condition follows from Lemma \ref{lemma:ae}. Thus, to complete the proof, it remains to verify the third condition of Kolmogorov’s theorem. Now, for any $\boldsymbol{y}\in \mathbb{R}^2$ and $\tau \in \mathbb{R}$, the triangle inequality yields
	\begin{align*}
		\big\|\hat{u}_{4,h_m}(\cdot+\boldsymbol{y},\cdot+\tau)-\hat{u}_{4,h_m}(\cdot,\cdot)\big\|_{L^2(\mathbb{R}^{N})}&\leq \big\|\hat{u}_{4,h_m}(\cdot+\boldsymbol{y},\cdot+\tau)-\hat{u}_{4,h_m}(\cdot+\boldsymbol{y},\cdot)\big\|_{L^2(\mathbb{R}^{N})}\\
		&\,\, +\|\hat{u}_{4,h_m}(\cdot+\boldsymbol{y},\cdot)-\hat{u}_{4,h_m}(\cdot,\cdot)\big\|_{L^2(\mathbb{R}^{N})}.
	\end{align*}
	Thanks to the translate estimates given by Lemma \ref{lemma:time-translate} and \ref{lemma:space-translate} we have that $\big\|\hat{u}_{4,h_m}(\cdot+\boldsymbol{y},\cdot+\tau)-\hat{u}_{4,h_m}(\cdot,\cdot)\big\|_{L^2(\mathbb{R}^{N})}\to 0$, as $\boldsymbol{y}\to 0$ and $\tau\to 0$. This guarantees the compactness of the sequence $(u_{4,h_m})_{m}$ in $L^2(\Omega_T)$. Then, by using the Kolmogorov theorem, there exists a subsequence still denoted by $(u_{4,h_m})_{m}$, and a function $u_{4}\in L^2(\Omega_T)$ such that $u_{4,h_m}\longrightarrow u_{4}$ strongly in $L^2(\Omega_T)$ and a.e. in $\Omega_T$. Since this sequence is bounded in $L^{\infty}(\Omega_T)$ this convergence also holds in $L^{p}(\Omega_T),$ for all $p\in [1,+\infty)$. Following the same approach, and using the translation estimates \eqref{eq:time-translate}, \eqref{eq:space-translate} together with the $L^{\infty}$ bound of $u_{i,h_m}$ for $i=1,3,5$, we obtain, after extracting a subsequence, that $u_{i,h_m}\longrightarrow u_{i}$ strongly in $L^p(\Omega_T)$ and almost everywhere in $\Omega_T$, for all $p\in [1,\infty)$ and $i=1,3,5$. This concludes the proof of \textit{(a)}.
	
	We now proceed to demonstrate the second convergence, \textit{(b)}. Due to Lemma \ref{lemma:maxprin} the sequence $(u_{2,h_m})_{m}$ is uniformly bounded in $L^2(\Omega_T)$. Thus, this sequence converges weakly in $L^2(\Omega_T)$, up to a possibly unlabeled subsequence, to a function $u_2$ in $L^2(\Omega_T)$.
	
	We establish now the item \textit{c)}. Let $i\in \{1,3,4,5\}$ fixed. By using the discrete estimate \eqref{eq:ae-bound-2}, we can establish that the sequence $\Big(\nabla_{\mathcal{T}_m} u_{i,h_m}\Big)_m$ is uniformly bounded in $[L^2(\Omega_T)]^2$. Therefore, up to a possibly
	unlabeled subsequence, this sequence converges weakly to a function $p_i^{*}\in [L^2(\Omega_T)]^2$. Then, we identify $\nabla u_i$ by $p_i^{*}$ by using the following convergence result (see, e.g., \cite{chainais2003finite,bendahmane2014convergence})
	\begin{align*}
		\int_0^{T} \int_{\Omega} \Big( \nabla_{\mathcal{T}_m} u_{i,h_m}\cdot \phi_i + u_{i,h_m} \nabla \cdot \phi_i\Big)\, \mathrm{d}\boldsymbol{x}\, \mathrm{d}t\longrightarrow 0, \quad \text{ as }m\to +\infty, \quad \forall \phi_i \in [\mathcal{D}(\Omega_T)]^2.
	\end{align*}
	This ends the proof of \textit{c)}.
	
	Finally, we establish \textit{(d)}, that is, we identify the function $\boldsymbol{u} = (u_1,\dots,u_5)^{\mathrm{T}}$, obtained from the arguments above, as a weak solution of the continuous problem \eqref{eq:model} in the sense of Definition \ref{def:weaksol}. Let us focus on the proof of convergence for $i=2,4$. The convergence of the rest of the equations is similar.
	
	\subsubsection{Convergence of the discrete plaque oligomers equation}\label{sec:conv-u2} 
	We consider $\psi\in \mathcal{D}(\overline{\Omega}\times [0,T))$ and denote by $\psi_K^{n} = \psi(\boldsymbol{x}_K,t_n),$ for all $K\in \mathcal{T}$ and $n\in \{0,\dots, N_m\}$. Multiplying the equation \eqref{eq:fvm-2} by $\Delta t_m \psi_K^{n+1}$ and summing over $K\in \mathcal{T}_m$ and $n\in \{0,\dots,N_m\}$ yields $A_m=B_m+C_m$, where
	\begin{equation}
		\begin{aligned}
			A_m&=\sum_{n=0}^{N_m} \sum_{K\in \mathcal{T}_m}m(K)(u_{2,K}^{n+1}-u_{2,K}^{n}) \psi_K^{n+1},\\
			B_m&=\sum_{n=0}^{N_m} \Delta t_m \sum_{K\in \mathcal{T}_m}m(K) \gamma(u_{4,K}^{n}) u_{1,K}^{n} \psi_K^{n+1},\\
			C_m&=-\tau_p \sum_{n=0}^{N_m} \Delta t_m \sum_{K\in \mathcal{T}_m}m(K) u_{2,K}^{n+1} \psi_K^{n+1}.
		\end{aligned}
	\end{equation}
	Then, by using summation by parts in time and noticing that $\psi_K^{N_m+1} = 0,$ for all $K\in \mathcal{T}_m$ we get
	\begin{equation}
		\begin{aligned}\label{eq:conv-1}
			A_m &= - \sum_{n=0}^{N_m} \sum_{K\in \mathcal{T}_m}m(K) u_{2,K}^{n+1} (\psi_{K}^{n+1}-\psi_K^{n})-\sum_{K\in \mathcal{T}_m}m(K) u_{2,K}^{0} \psi_K^{0}\\
			&=-\iint_{\Omega_T} u_{2,h_m}(\boldsymbol{x},t) \partial_t \psi(\boldsymbol{x}_K,t) \, \mathrm{d}\boldsymbol{x}\, \mathrm{d}t -  \int_{\Omega} u_{2,0}(\boldsymbol{x}) \psi(\boldsymbol{x}_K,0) \, \mathrm{d}\boldsymbol{x}:= -A_m^{(1)}-A_m^{(2)}. 
		\end{aligned}
	\end{equation}
	Then, using the regularity properties of $\partial_t \psi$ we obtain
	\begin{align*}
		\left|A_m^{(1)}-\iint_{\Omega_T} u_{2}(\boldsymbol{x},t) \partial_t \psi(\boldsymbol{x},t) \, \mathrm{d}\boldsymbol{x}\, \mathrm{d}t\right|&\leq  \iint_{\Omega_T}\big|u_{2,h_m}(\boldsymbol{x},t) \partial_t \psi(\boldsymbol{x}_K,t)-u_2(\boldsymbol{x},t)\partial_t \psi(\boldsymbol{x},t)\big| \, \mathrm{d}\boldsymbol{x}\, \mathrm{d}t \\
		& \leq \|u_{2,h_m}\|_{L^{\infty}(\Omega_T)} \iint_{\Omega_T} \big| \partial_t \psi(\boldsymbol{x}_K,t)-\partial_t \psi(\boldsymbol{x},t)\big| \, \mathrm{d}\boldsymbol{x}\, \mathrm{d}t \\
		&\qquad+\iint_{\Omega_T} \big|u_{2,h_m}(\boldsymbol{x},t)\partial_t \psi(\boldsymbol{x},t)-u_2(\boldsymbol{x},t)\partial_t \psi(\boldsymbol{x},t)\big|\, \mathrm{d}\boldsymbol{x}\, \mathrm{d}t\\
		&\leq \iint_{\Omega_T} \big|u_{2,h_m}(\boldsymbol{x},t)\partial_t \psi(\boldsymbol{x},t)-u_2(\boldsymbol{x},t)\partial_t \psi(\boldsymbol{x},t)\big|\, \mathrm{d}\boldsymbol{x}\, \mathrm{d}t\\
		&\quad + C \|u_{2,h_m}\|_{L^{\infty}(\Omega_T)} h_m,
	\end{align*}
	for some constant $C>0$. Thus, using the regularity of the function $\psi$, the boundedness of the sequence $(u_{2,h_m})_m$ in $L^{\infty}(\Omega_T)$, and the weak convergence in $L^2(\Omega_T)$ of $u_{2,h_m}$ towards $u_2$  it follows that
	\begin{equation}\label{eq:conv-2}
		A_{m}^{(1)} \longrightarrow \iint_{\Omega_T} u_{2}(\boldsymbol{x},t) \partial_t \psi(\boldsymbol{x},t) \, \mathrm{d}\boldsymbol{x}\, \mathrm{d}t, \quad \text{ as }m\to +\infty.
	\end{equation}
	In the same way, by using the Lipschitz continuity of the function $\psi$, we observe that,
	\begin{align*}
		\left|A_m^{(2)}-\int_{\Omega} u_{2,0}(\boldsymbol{x}) \psi(\boldsymbol{x},0) \, \mathrm{d}\boldsymbol{x}\right|&\leq \int_{\Omega} |u_{2,0}(\boldsymbol{x})||\psi(\boldsymbol{x}_K,0)-\psi(\boldsymbol{x},0)| \,  \mathrm{d}\boldsymbol{x}\\
		& \leq \|u_{2,0}\|_{L^{\infty}(\Omega_T)}\int_{\Omega}|\psi(\boldsymbol{x}_K,0)-\psi(\boldsymbol{x},0)|\,  \mathrm{d}\boldsymbol{x}\\
		&\leq C \|u_{2,0}\|_{L^{\infty}(\Omega_T)} h_m,
	\end{align*}
	for some constant $C>0$. This yields
	\begin{equation}\label{eq:conv-3}
		A_{m}^{(2)} \longrightarrow \int_{\Omega} u_{2,0}(\boldsymbol{x}) \psi(\boldsymbol{x},0) \, \mathrm{d}\boldsymbol{x},\quad \text{ as }m\to +\infty.
	\end{equation}
	From \eqref{eq:conv-1}, \eqref{eq:conv-2}, and \eqref{eq:conv-3} it follows that
	\begin{equation}\label{eq:conv-am}
		A_m \longrightarrow -\iint_{\Omega_T} u_{2}(\boldsymbol{x},t) \partial_t \psi(\boldsymbol{x},t) \, \mathrm{d}\boldsymbol{x}\, \mathrm{d}t-\int_{\Omega} u_{2,0}(\boldsymbol{x}) \psi(\boldsymbol{x},0) \, \mathrm{d}\boldsymbol{x},\quad \text{ as }m\to +\infty.
	\end{equation}
	Now, we focus on $B_m$. We define $\psi_{h_m}(\boldsymbol{x},t) = \psi_{K}^{n}$, for all $\boldsymbol{x}\in K$ and $t\in [t_n,t_{n+1})$ and we observe that
	\begin{align*}
		B_m &= \iint_{\Omega_T} \gamma\big(u_{4,h_m}(\boldsymbol{x},t-\Delta t_m)\big) u_{1,h_m}(\boldsymbol{x},t-\Delta t_m) \psi_{h_m}(\boldsymbol{x},t) \, \mathrm{d}\boldsymbol{x}\, \mathrm{d}t,
	\end{align*}
	where we have defined $u_{i,h_m}(\boldsymbol{x},t-\Delta t_m)=u_{i,h_m}(\boldsymbol{x},t)$, for all $t\in [0,\Delta t_m]$, $i=1,4$. Then, by employing triangle and Cauchy-Schwarz inequalities, Lemma \ref{lemma:maxprin}, and regularity of the function $\psi$ we arrive at
	{\begin{equation}\label{eq:conv-4}
			\begin{aligned}
				\left|B_m \right.&\left.- \iint_{\Omega_T} \gamma(u_4(\boldsymbol{x},t))u_1(\boldsymbol{x},t) \psi(\boldsymbol{x},t)\, \mathrm{d}\boldsymbol{x}\, \mathrm{d}t\right|\\
				&\leq \iint_{\Omega_T} \big|\gamma\big(u_{4,h_m}(\boldsymbol{x},t-\Delta t_m)\big) \big|\big|u_{1,h_m}(\boldsymbol{x},t-\Delta t_m)\psi_{h_m}(\boldsymbol{x},t) - u_1(\boldsymbol{x},t) \psi(\boldsymbol{x},t)\big|\mathrm{d}\boldsymbol{x}\, \mathrm{d}t\\
				&\quad +  \iint_{\Omega_T}  \big|\gamma\big(u_{4,h_m}(\boldsymbol{x},t-\Delta t_m)-\gamma\big(u_{4}(\boldsymbol{x},t)\big)\big| |u_1(\boldsymbol{x},t) \psi(\boldsymbol{x},t)|\,\mathrm{d}\boldsymbol{x}\, \mathrm{d}t\\
				&\leq \|\gamma(u_{4,h_m})\|_{L^{\infty}(\Omega)} \|\psi_{h_m}\|_{L^2(\Omega_T)}\left(\iint_{\Omega_T}|u_{1,h_m}(\boldsymbol{x},t-\Delta t_m)-u_1(\boldsymbol{x},t)|^2\, \mathrm{d}\boldsymbol{x}\, \mathrm{d}t \right)^{1/2}\\
				&\quad + \|\gamma(u_{4,h_m})\|_{L^{\infty}(\Omega)} \|u_1\|_{L^2(\Omega_T)} \|\psi_{h_m}-\psi\|_{L^2(\Omega_T)}\\
				&\quad +\|u_1\|_{L^2(\Omega)}\|\psi\|_{L^{\infty}(\Omega)} \left(\iint_{\Omega_T}|\gamma\big(u_{4,h_m}(\boldsymbol{x},t-\Delta t_m)\big)-\gamma\big(u_4(\boldsymbol{x},t)\big)|^2\, \mathrm{d}\boldsymbol{x}\, \mathrm{d}t \right)^{1/2}.
			\end{aligned}
	\end{equation}}
	Let us introduce the terms
	\begin{align*}
		B_m^{(1)}&:=\left(\iint_{\Omega_T}|u_{1,h_m}(\boldsymbol{x},t-\Delta t_m)-u_1(\boldsymbol{x},t)|^2\, \mathrm{d}\boldsymbol{x}\, \mathrm{d}t \right)^{1/2},\\
		B_m^{(2)}&:= \left(\iint_{\Omega_T}|\gamma\big(u_{4,h_m}(\boldsymbol{x},t-\Delta t_m)\big)-\gamma\big(u_4(\boldsymbol{x},t)\big)|^2\, \mathrm{d}\boldsymbol{x}\, \mathrm{d}t \right)^{1/2}.
	\end{align*}
	By using the time-translate estimate \eqref{eq:time-translate} we get
	\begin{equation}\label{eq:conv-5}
		\begin{aligned}
			B_m^{(1)}&\leq \left(\iint_{\Omega_T}|u_{1,h_m}(\boldsymbol{x},t-\Delta t_m)-u_{1,h_m}(\boldsymbol{x},t)|^2\, \mathrm{d}\boldsymbol{x}\, \mathrm{d}t \right)^{1/2}\\
			&\quad +\left(\iint_{\Omega_T}|u_{1,h_m}(\boldsymbol{x},t)-u_1(\boldsymbol{x},t)|^2\, \mathrm{d}\boldsymbol{x}\, \mathrm{d}t \right)^{1/2} \leq C \sqrt{\Delta t_m}+\|u_{1,h_m}-u_1\|_{L^2(\Omega)}.
		\end{aligned}
	\end{equation}
	In the same way, by employing the time-translate estimate \eqref{eq:time-translate} and the Lipschitz continuity property \eqref{eq:prop-gamma} of the function $\gamma$, one gets
	\begin{equation}
		\begin{aligned}\label{eq:conv-6}
			B_m^{(2)}&\leq \left(\iint_{\Omega_T}|\gamma\big(u_{4,h_m}(\boldsymbol{x},t-\Delta t_m)\big)-\gamma\big(u_{4,h_m}(\boldsymbol{x},t)\big)|^2\, \mathrm{d}\boldsymbol{x}\, \mathrm{d}t \right)^{1/2}\\
			&\quad +\left(\iint_{\Omega_T}|\gamma\big(u_{4,h_m}(\boldsymbol{x},t)\big)-\gamma\big(u_4(\boldsymbol{x},t)\big)|^2\, \mathrm{d}\boldsymbol{x}\, \mathrm{d}t \right)^{1/2} \leq \gamma_1 C \sqrt{\Delta t_m}+\gamma_1\|u_{4,h_m}-u_4\|_{L^2(\Omega_T)}
		\end{aligned}
	\end{equation}
	Therefore, using the regularity of $\psi$, together with the boundedness of $(\gamma(u_{4,h_m}))_m$ in $L^{\infty}(\Omega_T)$, the boundedness of $u_1$ in $L^2(\Omega_T)$, and the strong convergence of $u_{i,h_m}$ to $u_i$ in $L^2(\Omega_T)$ for $i=1,4$, we deduce from \eqref{eq:conv-4}, \eqref{eq:conv-5}, and \eqref{eq:conv-6} that
	\begin{equation}\label{eq:conv-bm}
		B_m \longrightarrow \iint_{\Omega_T} \gamma\big(u_4(\boldsymbol{x},t)\big)u_1(\boldsymbol{x},t) \psi(\boldsymbol{x},t)  \, \mathrm{d}\boldsymbol{x}\, \mathrm{d}t,\quad \text{ as }m\to +\infty.
	\end{equation}
	Next, for the term $C_m$ we observe that
	\begin{equation*}
		C_m = -\tau_p\iint_{\Omega_T} u_{2,h_m}(\boldsymbol{x},t) \psi_{h_m}(\boldsymbol{x},t) \mathrm{d}\boldsymbol{x}\, \mathrm{d}t.
	\end{equation*}
	By following the estimates for $B_m$ and employing the regularity properties of $\psi$, together with the weak convergence of the sequence $(u_{2,h_m})_m$ in $L^2(\Omega_T)$, we can deduce that
	\begin{equation}\label{eq:conv-cm}
		C_m \longrightarrow -\tau_p \iint_{\Omega_T} u_2(\boldsymbol{x},t) \psi(\boldsymbol{x},t)  \, \mathrm{d}\boldsymbol{x}\, \mathrm{d}t,\quad \text{ as }m\to +\infty.
	\end{equation}
	From \eqref{eq:conv-am}, \eqref{eq:conv-bm}, and \eqref{eq:conv-cm} it follows that $u_2$ satisfies \eqref{eq:weak-u2}.
	\subsubsection{Convergence of the discrete microglial cells equation}
	In the same way, let $\psi\in \mathcal{D}(\overline{\Omega}\times [0,T))$ and denote by $\psi_K^{n} = \psi(\boldsymbol{x}_K,t_n),$ for all $K\in \mathcal{T}$ and $n\in \{0,\dots, N_m\}$. Multiplying the equation \eqref{eq:fvm-4} by $\Delta t_m \psi_K^{n+1}$ and summing over $K\in \mathcal{T}_m$ and $n\in \{0,\dots,N_m\}$ we get $A_m'+D_m'+C_m' = D_m'$, where
	\begin{equation}
		\begin{aligned}
			A_m'&=\sum_{n=0}^{N_m} \sum_{K\in \mathcal{T}_m}m(K)(u_{4,K}^{n+1}-u_{4,K}^{n}) \psi_K^{n+1},\\
			B_m'&=-d_4 \sum_{n=0}^{N_m} \Delta t_m \sum_{K\in \mathcal{T}_m}\sum_{L\in \mathcal{N}(K)}\tau_{K|L} (u_{4,L}^{n+1}-u_{4,K}^{n+1})  \psi_K^{n+1},\\
			C_m'&=\sum_{n=0}^{N_m} \Delta t_m\sum_{L\in \mathcal{N}(K)} \tau_{K|L} \mathcal{G}\big(u_{4,K}^{n+1},u_{4,L}^{n+1};\delta u_{1,KL}^{n+1}\big)  \psi_K^{n+1},\\
			D_m'&=\sum_{n=0}^{N_m} \Delta t_m\sum_{K\in \mathcal{T}_m} m(K) \bar{F}_{4}(\boldsymbol{u}_{K}^{n},\boldsymbol{u}_{K}^{n+1})\psi_K^{n+1}.
		\end{aligned}
	\end{equation}
	Regarding the time-evolution term $A_m'$, we apply the same approach as in Section \ref{sec:conv-u2}, which yields the following convergence
	\begin{equation}\label{eq:conv-amprima}
		A_m' \longrightarrow -\iint_{\Omega_T} u_{4}(\boldsymbol{x},t) \partial_t \psi(\boldsymbol{x},t) \, \mathrm{d}\boldsymbol{x}\, \mathrm{d}t-\int_{\Omega} u_{4,0}(\boldsymbol{x}) \psi(\boldsymbol{x},0) \, \mathrm{d}\boldsymbol{x},\quad \text{ as }m\to +\infty.
	\end{equation}
	In addition, by using the Lipschitz continuity properties of the function $\bar{\boldsymbol{F}}_4$ and the same arguments as in Section \ref{sec:conv-u2} for the terms $B_m$ and $C_m$, we can prove the following convergence
	\begin{equation}\label{eq:conv-dmprima}
		D_m' \longrightarrow \iint_{\Omega_T} {F}_{4}(\boldsymbol{u}(\boldsymbol{x},t)) \psi(\boldsymbol{x},t) \, \mathrm{d}\boldsymbol{x} \, \mathrm{d}t, \quad \text{ as }m\to +\infty.
	\end{equation}
	Therefore, we only need to handle the diffusion term $B_m'$ and the convection term $C_m'$. For the diffusive part we integrate by parts and having into account that $\tau_{K|L}=2m(T_{KL})/d_{K|L}^2$ we use the definition of the discrete gradient \eqref{eq:grad-disc} to obtain 
	\begin{equation}\label{eq:conv-7}
		\begin{aligned}
			B_m' &= d_4 \sum_{n=0}^{N_m} \Delta t_m \sum_{K\in \mathcal{T}_m}\sum_{L\in \mathcal{N}(K)}\tau_{K|L} (u_{4,L}^{n+1}-u_{4,K}^{n+1}) (\psi_L^{n+1}-\psi_K^{n+1})\\
			&=\dfrac{d_4}{2} \sum_{n=0}^{N_m} \Delta t_m \sum_{K\in \mathcal{T}_m}\sum_{L\in \mathcal{N}(K)} m(T_{KL}) \left[2\left(\dfrac{u_{4,L}^{n+1}-u_{4,K}^{n+1}}{d_{K|L}}\right)\cdot \boldsymbol{n}_{K|L}\right] \left[2\left(\dfrac{\psi_{L}^{n+1}-\psi_{K}^{n+1}}{d_{K|L}}\right)\cdot \boldsymbol{n}_{K|L}\right]\\
			&=\dfrac{d_4}{2} \sum_{n=0}^{N_m} \Delta t_m  \int_{\Omega} \nabla_{\mathcal{T}_m} u_{4,\mathcal{T}_m}^{n+1}\cdot \nabla_{\mathcal{T}_m} \psi_{\mathcal{T}_m}^{n+1}\, \mathrm{d}\boldsymbol{x}.
		\end{aligned}
	\end{equation}
	Now, let us define $\boldsymbol{x}_{KL} = \theta \boldsymbol{x}_K+(1-\theta)\boldsymbol{x}_L$, for $0<\theta<1$ some point of the segment $\llbracket\boldsymbol{x}_K,\boldsymbol{x}_L\rrbracket$ such that
	\begin{equation*}
		\psi(\boldsymbol{x}_{L},t_{n+1})-\psi(\boldsymbol{x}_{K},t_{n+1}) = d_{K|L} \nabla \psi (\boldsymbol{x}_{KL},t_{n+1})\cdot \boldsymbol{n}_{K|L}.
	\end{equation*}
	Then we have that
	\begin{equation*}
		\nabla_{\mathcal{T}_m} \psi_{\mathcal{T}_m}^{n+1} = 2\left(\dfrac{\psi_{L}^{n+1}-\psi_K^{n+1}}{d_{K|L}}\right)\cdot \boldsymbol{n}_{K|L} = 2\left(\dfrac{\psi(\boldsymbol{x}_L,t_{n+1})-\psi(\boldsymbol{x}_K,t_{n+1})}{d_{K|L}}\right)\cdot \boldsymbol{n}_{K|L} = 2\nabla \psi(\boldsymbol{x}_{KL},t_{n+1}).
	\end{equation*}
	Let us define $(\nabla \psi)_{h_m}(\boldsymbol{x},t) = \nabla \psi(\boldsymbol{x}_{KL},t_{n+1}),$ for all $(\boldsymbol{x},t)\in T_{KL}\times(t_n,t_{n+1})$. Then from \eqref{eq:conv-7} it follows that
	\begin{equation}
		B_m' = d_4 \iint_{\Omega_T} \nabla_{\mathcal{T}_m} u_{4,\mathcal{T}_m}(\boldsymbol{x},t)\cdot (\nabla \psi)_{h_m}(\boldsymbol{x},t)\, \mathrm{d}\boldsymbol{x}.
	\end{equation}
	Now, by employing the weak convergence \textit{c)} of discrete gradient of the microglial cells $u_{4,h_n}$ and the regularity properties of the test function $\psi$ allow us to deduce that
	\begin{equation}\label{eq:conv-bmprima}
		B_m'\longrightarrow d_4 \iint_{\Omega_T} \nabla u_4(\boldsymbol{x},t) \cdot \nabla \psi(\boldsymbol{x},t) \, \mathrm{d}\boldsymbol{x}\, \mathrm{d} t.
	\end{equation}
	Finally, we focus on the convergence of the convective term. We adopt similar ideas to \cite{andreianov2011finite,alotaibi2024computational} so we first prove that $|C_m-C_m^{*}|\to 0$, as $m\to +\infty$, where
	\begin{equation*}
		C_m^{*} = -\iint_{\Omega_T} \chi(\underbar{$u$}_{4,h_m}(\boldsymbol{x},t)) \nabla_{\mathcal{T}_m} u_{1,h_m}(\boldsymbol{x},t) \cdot (\nabla \psi)_{h_m}(\boldsymbol{x},t)\, \mathrm{d}\boldsymbol{x}\, \mathrm{d}t,
	\end{equation*}
	where $\underbar{$u$}_{4,h_m}$ is defined by $\underbar{$u$}_{4,h_m}(\boldsymbol{x},t) := u_{4,KL}^{n+1} := \min(u_{4,K}^{n+1},u_{4,L}^{n+1}),$ for $(\boldsymbol{x},t)\in T_{KL}\times(t_n,t_{n+1}).$ From the assumptions (H2)-(H4) of the numerical flux function $\mathcal{G}$, we get
	\begin{equation*}
		\begin{aligned}
			\Big|\mathcal{G}\big(u_{4,K}^{n+1},u_{4,L}^{n+1};\delta u_{1,KL}^{n+1}\big) - \chi(u_{4,KL}^{n+1})\delta u_{1,KL}^{n+1}\Big| &=  \Big|\mathcal{G}\big(u_{4,K}^{n+1},u_{4,L}^{n+1};\delta u_{1,KL}^{n+1}\big) - \mathcal{G}\big(u_{4,K}^{n+1},u_{4,K}^{n+1};\delta u_{1,KL}^{n+1}\big)\Big|\\
			&\leq C\big|\delta u_{1,KL}^{n+1}\big| \big|u_{4,L}^{n+1}-u_{4,K}^{n+1}\big|.
		\end{aligned}
	\end{equation*}
	Define now  $\bar{u}_{4,h_m}$ is defined by $\bar{u}_{4,h_m}(\boldsymbol{x},t) := u_{4,KL}^{n+1} := \max(u_{4,K}^{n+1},u_{4,L}^{n+1}),$ for $(\boldsymbol{x},t)\in T_{KL}\times(t_n,t_{n+1}).$ Then, from the inequality above one gets
	\begin{equation*}
		|C_m-C_m^{*}|\leq C \iint_{\Omega_T} \big| \bar{u}_{4,h_m}(\boldsymbol{x},t)-\underbar{$u$}_{4,h_m}(\boldsymbol{x},t)\big| \big|\nabla_{\mathcal{T}_m} u_{1,h_m}(\boldsymbol{x},t) \cdot (\nabla \psi)_{h_m}(\boldsymbol{x},t)\big|\, \mathrm{d}\boldsymbol{x}\, \mathrm{d}t. 
	\end{equation*}
	Thanks to the apriori estimate \eqref{eq:ae-bound-2} we notice that $| \bar{u}_{4,h_m}-\underbar{$u$}_{4,h_m}|\longrightarrow 0$, a.e. in $\Omega_T$.  By using Cauchy-Schwarz inequality with the uniform bound for $\nabla_{\mathcal{T}_m} u_{1,h_m}$ given by \eqref{eq:ae-bound-2}, it follows that $|C_m-C_m^{*}|\longrightarrow 0$ as $m\to +\infty$. In addition, we have that $\underbar{$u$}_{4,h_m}\leq u_{4,h_m}\leq \bar{u}_{4,h_m}$ and $u_{4,h_m}\longrightarrow u$ a.e. on $\Omega_T$. Then, by the continuity of the sensitivity function $\chi$, we obtain that $\chi\big(\underbar{$u$}_{4,h_m}\big) \longrightarrow \chi(u)$ a.e. on $\Omega_T$ and in $L^p(\Omega_T)$, for $1\leq p<+\infty$. Consequently, by the weak convergence \textit{c)}, we get that
	\begin{equation}\label{eq:conv-cmprima}
		C_m\longrightarrow -\iint_{\Omega_T} \chi(u_4(\boldsymbol{x},t)) \nabla u_1(\boldsymbol{x},t) \cdot \nabla \psi(\boldsymbol{x},t)\, \mathrm{d}\boldsymbol{x}\, \mathrm{d}t.
	\end{equation}
	From \eqref{eq:conv-amprima},\eqref{eq:conv-dmprima}, \eqref{eq:conv-bmprima}, and \eqref{eq:conv-cmprima}, it follows that $u_4$ satisfies \eqref{eq:weak-u4}. This concludes the proof.
\end{proof}
\section{Dynamical consistency with the SH model}\label{sec:dc}
In this section, we show that the FV scheme \eqref{eq:rhs-disc}–\eqref{eq:fvm-5} is dynamically consistent with the SH model \eqref{eq:modelhomo}, in the sense that key properties of the continuous system, such as boundedness, equilibrium points, and the local stability of the disease-free equilibrium, are preserved by the discrete scheme.
\subsection{Nonstandard discretization}
To describe this discretization approach let us consider a general Cauchy problem of the form
\begin{equation}\label{eq:general-ode}
	\begin{aligned}
		\dfrac{d\boldsymbol{u}}{dt}&=\boldsymbol{f}(\boldsymbol{u}), \textrm{ in }[0,T]\\
		\boldsymbol{u}(t_0)&=\boldsymbol{u}^0,
	\end{aligned}
\end{equation}
where $T$ is a positive real number, $\boldsymbol{u}=(u_1,u_2, \dots, u_N)^{\mathrm{T}}:[0,T]\to \mathbb{R}^N$, and the function $\boldsymbol{f}=(f_1,f_2,\dots,f_n)^{\mathrm{T}}:\mathbb{R}^N\to \mathbb{R}^N$ is differentiable at $\boldsymbol{u}^0\in \mathbb{R}^n$. First, we employ the discretization of the temporal domain $[0,T]$ described in Section \ref{sec:spacetimedisc}, i.e. we set $t_n=n \Delta t $, where $\Delta t=T/M$ is the size step, and $M$ is a fixed positive integer. A numerical scheme with step size $\Delta t$ that approximate the solution $\boldsymbol{u}(t_n)$ of the system \eqref{eq:general-ode}
can be write as
\begin{equation}\label{eq:general-approx}
	\boldsymbol{\mathcal{D}}_{\Delta t}(\boldsymbol{u}^n)=\boldsymbol{F}_{\Delta t}(\boldsymbol{f};\boldsymbol{u}^n),
\end{equation}
where, $\boldsymbol{\mathcal{D}}_{\Delta t}(\boldsymbol{u}^n)\approx \dfrac{d\boldsymbol{u}}{dt}\Big|_{t=t_n}$, $\boldsymbol{u}^n\approx \boldsymbol{u}(t_n)$, and $\boldsymbol{F}_{\Delta t}(\boldsymbol{f};\boldsymbol{u}^n)$ approximates the right side of the system (\ref{eq:general-ode}).
\begin{definition}\label{def:NSFD}
	The scheme give by (\ref{eq:general-approx}) is called a Non-Standard Finite Difference method (NSFD) if at least one of the following conditions is satisfied,
	\begin{enumerate}
		\item $\boldsymbol{\mathcal{D}}_{\Delta t}(\boldsymbol{u}^n)=\dfrac{\boldsymbol{u}^{n+1}-\boldsymbol{u}^n}{\varphi(\Delta t)}$, where $\varphi$ is a non-negative real valued function which satisfies, \[\varphi(\Delta t)=\Delta t+\mathcal{O}(\Delta t^{2}).\] Some examples of this kind of functions are: 
		\begin{equation}\label{eq6}
			\varphi (\Delta t) = \frac{1-e^{-\lambda \Delta t}} {\lambda}, \, \lambda> 0, \, \, \varphi (\Delta t) = e^{\Delta t} -1, \text { see \cite{mickens1993nonstandard,mickens2000applications}.}
		\end{equation}
		\item $\boldsymbol{F}_{\Delta t}(\boldsymbol{f};\boldsymbol{u}^n)=\boldsymbol{G}(\boldsymbol{u}^n,\boldsymbol{u}^{n+1},\Delta t)$, where $\boldsymbol{G}(\boldsymbol{u}^n,\boldsymbol{u}^{n+1},\Delta t)$ is a non-local approximation of the right side of the system (\ref{eq:general-ode}).
	\end{enumerate}
\end{definition}
Now, considering the FV scheme \eqref{eq:rhs-disc}-\eqref{eq:fvm-5} but neglecting the discretization of the spatial operators, we can approximate the system of equations \eqref{eq:modelhomo} as 
\begin{equation}\label{eq:nsfd1}
	\begin{aligned}
		\frac{u_1^{n+1}-u_1^n}{\Delta t} &= r_1 (u_3^n)^2-\gamma(u_4^n) u_1^{n+1}-\tau_0 u_1^{n+1}, \\
		\frac{u_2^{n+1}-u_2^n}{\Delta t}&=\gamma(u_4^n) u_1^{n}-\tau_p u_2^{n+1}\\
		\frac{u_3^{n+1}-u_3^n}{\Delta t} &=\dfrac{\tau_S}{1+C (u_1^n)^{\nu}}u_5^n-du_3^{n+1}-r_2u_1^nu_3^{n+1}-r_1 u_3^n u_3^{n+1},\\
		\frac{u_4^{n+1}-u_4^n}{\Delta t}&=\dfrac{\alpha_1 u_1^n}{1+\alpha_2 u_1^n}(\hat{m}-u_4^{n+1})u_4^{n}-\sigma u_4^{n+1}+\lambda_M,\\
		\frac{u_5^{n+1}-u_5^n}{\Delta t}&= \dfrac{\tau_1 u_1^n}{1+\tau_2 u_1^n}u_4^n-\tau_3 u_5^{n+1}.
	\end{aligned}
\end{equation}
We notice that this discretization coincides with the traditional NSFD approach described in Definition \ref{def:NSFD} with $\varphi(\Delta t) = \Delta t$, so from now on we will refer to the scheme as a ``NSFD method''.
\subsection{Properties of the NSFD method}
We first demonstrate that the NSFD method preserves the invariance of the rectangle $\mathcal{R}$. This is stated in the following proposition.
\begin{prop}\label{lemma:nsfd-inv}
	Let $\hat{m}\geq \frac{\lambda_M}{\sigma}$, then the NSFD method \eqref{eq:nsfd1} is dynamically consistent with respect to the invariance of the rectangle $\mathcal{R}$  for all the values of the step size $\Delta t$, that is $\boldsymbol{u}^n\in \mathcal{R}$ for all $n\geq 1$ whenever $\boldsymbol{u}^0\in \mathcal{R}$.
\end{prop}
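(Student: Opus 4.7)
The plan is to argue by induction on $n$, exploiting the fact that the scheme \eqref{eq:nsfd1} is linear and fully implicit in each updated unknown, so that every $u_i^{n+1}$ can be written in closed form as a ratio with positive denominator. First I will solve each equation for $u_i^{n+1}$: for instance
\begin{equation*}
u_1^{n+1}=\frac{u_1^{n}+\Delta t\, r_1(u_3^{n})^2}{1+\Delta t(\gamma(u_4^{n})+\tau_0)},\qquad
u_2^{n+1}=\frac{u_2^{n}+\Delta t\,\gamma(u_4^{n})u_1^{n}}{1+\Delta t\,\tau_p},
\end{equation*}
and analogously for $u_3^{n+1},u_4^{n+1},u_5^{n+1}$. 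The base step $n=0$ is given, and under the inductive assumption $\boldsymbol{u}^n\in\mathcal{R}$, non-negativity of $\boldsymbol{u}^{n+1}$ is immediate because every numerator is a sum of non-negative terms and every denominator exceeds $1$ (all structural coefficients are positive and $\gamma\geq\gamma_{\min}>0$).

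The core of the proof is then to check the upper bounds $u_i^{n+1}\le\beta_i$. I would do this componentwise, mirroring the continuous contraction argument of Proposition~\ref{prop:inv-R} and using the algebraic relations \eqref{eq:rel-beta}. For $u_1^{n+1}$, bounding $u_3^{n}\le\beta_3$ and $\gamma(u_4^{n})\ge\gamma_{\min}$ yields
\begin{equation*}
u_1^{n+1}\le\frac{\beta_1+\Delta t\,r_1\beta_3^{\,2}}{1+\Delta t(\gamma_{\min}+\tau_0)}=\frac{\beta_1+\Delta t(\gamma_{\min}+\tau_0)\beta_1}{1+\Delta t(\gamma_{\min}+\tau_0)}=\beta_1,
\end{equation*}
where I used $r_1\beta_3^{\,2}=(\gamma_{\min}+\tau_0)\beta_1$. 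The same pattern works for $u_2,u_3,u_5$: after bounding the numerator terms by means of the induction hypothesis, the relations $\gamma_{\max}\beta_1=\tau_p\beta_2$, $\tau_S\beta_5=d\beta_3$, and $\tfrac{\tau_1}{\tau_2}\beta_4=\tau_3\beta_5$ convert each fraction into $\frac{\beta_i+\Delta t\, c_i\beta_i}{1+\Delta t\,c_i}=\beta_i$.

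The only step that requires a slightly different argument is $u_4^{n+1}\le\hat{m}=\beta_4$. Writing $B:=\frac{\alpha_1 u_1^n}{1+\alpha_2 u_1^n}u_4^n\ge 0$, the scheme gives
\begin{equation*}
u_4^{n+1}=\frac{u_4^n+\Delta t(B\hat{m}+\lambda_M)}{1+\Delta t(B+\sigma)},
\end{equation*}
so $u_4^{n+1}\le\hat{m}$ is equivalent to $u_4^n-\hat{m}\le\Delta t(\sigma\hat{m}-\lambda_M)$. The left side is $\le 0$ by the inductive hypothesis, while the assumption $\hat{m}\ge\lambda_M/\sigma$ makes the right side $\ge 0$; hence the inequality holds for every $\Delta t>0$. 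This is the only place where the hypothesis $\hat{m}\ge\lambda_M/\sigma$ is actually used, and it is what yields the unconditional character of the bound. I do not anticipate a genuine obstacle: the proof is a direct verification, and the main subtlety is simply to realize that the NSFD denominators are designed precisely so that the continuous relations \eqref{eq:rel-beta} translate into the telescoping identity $\tfrac{\beta_i+\Delta t c_i\beta_i}{1+\Delta t c_i}=\beta_i$ on the discrete level, yielding invariance without any CFL-type restriction on $\Delta t$.
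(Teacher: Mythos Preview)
Your proposal is correct and follows essentially the same approach as the paper: induction on $n$, writing the scheme in explicit ratio form, immediate non-negativity, and the telescoping identity $\tfrac{\beta_i+\Delta t\,c_i\beta_i}{1+\Delta t\,c_i}=\beta_i$ via the relations \eqref{eq:rel-beta} for $i=1,2,3,5$. The only cosmetic difference is in the $u_4$ step: the paper bounds $u_4^{n+1}\le f(u_4^n)$ for an increasing auxiliary function $f$ with $f(\hat m)=\hat m$, whereas you rearrange the inequality $u_4^{n+1}\le\hat m$ directly; both arguments are equivalent and use the hypothesis $\hat m\ge\lambda_M/\sigma$ in the same place.
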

\begin{proof}
	We proceed by induction on $n$. By hypothesis $\boldsymbol{u}^0\in \mathcal{R}$, so let us assume that $\boldsymbol{u}^n\in \mathcal{R}$. Rearranging the scheme \eqref{eq:nsfd1} in explicit form we obtain that,
	\begin{equation}\label{eq:nsfd2}
		\begin{aligned}
			u_1^{n+1}&= \dfrac{u_1^n+\Delta t r_1 (u_3^n)^2}{1+\Delta t(\gamma(u_4^n)+\tau_0)},\,\, u_2^{n+1}=\dfrac{u_2^n+\Delta t\gamma(u_4^n) u_1^{n}}{1+\Delta t \tau_p }, \,\, u_3^{n+1}= \dfrac{u_3^{n}+ \dfrac{\Delta t\tau_S}{1+C (u_1^n)^{\nu}}u_5^n}{1+\Delta t(d+r_2u_1^n+r_1 u_3^n)},\\
			u_4^{n+1}&=\dfrac{u_4^n+\dfrac{\alpha_1 u_1^n }{1+\alpha_2 u_1^n}\Delta t \hat{m} u_4^{n}+\Delta t\lambda_M }{1+\Delta t\left(\dfrac{\alpha_1 u_1^n}{1+\alpha_2 u_1^n} u_4^n+\sigma \right)}, \, \, u_5^{n+1}=\dfrac{u_5^n+\Delta t \dfrac{\tau_1 u_1^n}{1+\tau_2 u_1^n }u_4^n}{1+\Delta t \tau_3}.
		\end{aligned}
	\end{equation}
	From these equations and for the fact that $\gamma(s)>0$, for all $s\geq 0$, it is clear that $u_{i}^{n+1}\geq 0$, for $i=1,\dots,5$. We need to prove now that $u_{i}^{n+1}\leq \beta_i$, for $i=1,\dots,5$. Let us focus first on $u_4^{n+1}$. From \eqref{eq:nsfd2} and the hypothesis $\hat{m}\geq \frac{\lambda_M}{\sigma}$ it follows that
	\begin{align*}
		u_{4}^{n+1} \leq \dfrac{u_4^n+\dfrac{\alpha_1 u_1^n }{1+\alpha_2 u_1^n}\Delta t \hat{m} u_4^{n}+\Delta t\sigma \hat{m} }{1+\Delta t\left(\dfrac{\alpha_1 u_1^n}{1+\alpha_2 u_1^n} u_4^n+\sigma \right)} = \dfrac{\left(1+\dfrac{\alpha_1 u_1^n }{1+\alpha_2 u_1^n}\Delta t \hat{m} \right)u_4^n+\Delta t\sigma \hat{m} }{1+\Delta t\left(\dfrac{\alpha_1 u_1^n}{1+\alpha_2 u_1^n} u_4^n+\sigma \right)}= f(u_4^n),
	\end{align*}
	where $f:[0,+\infty)\longrightarrow (0,+\infty)$ is the increasing function \eqref{eq:funf} defined in the proof of Lemma \ref{lemma:maxprin}. Now, by the induction hypothesis we know that $u_{4}^{n}\leq \hat{m}$, then $u_4^{n+1}\leq f(u_{4}^n)\leq f(\hat{m})=\hat{m}$. In this way we prove that $u_4^{n+1}\leq \beta_4$. Now, from \eqref{eq:rel-beta} and \eqref{eq:nsfd2}, the induction hypothesis $\boldsymbol{u}^n\in \mathcal{R}$ and the fact that $\gamma_{\min}\leq \gamma(s)\leq \gamma_{\max}$, for all $s\geq 0$ we see  that,
	\begin{align*}
		u_1^{n+1}&\leq \dfrac{\beta_1+\Delta t r_1 \beta_3^2}{1+\Delta t (\gamma_{\min}+\tau_0)} = \dfrac{\beta_1+\Delta t (\gamma_{\min}+\tau_0) \beta_1}{1+\Delta t (\gamma_{\min}+\tau_0)}=\beta_1,\\
		u_2^{n+1}&\leq \dfrac{\beta_2+\Delta t\gamma_{\max} \beta_1}{1+\Delta t \tau_p } = \dfrac{\beta_2+\Delta t\tau_p \beta_2}{1+\Delta t \tau_p} = \beta_2,\\
		u_3^{n+1}&\leq \dfrac{\beta_3+ \Delta t\tau_S \beta_5}{1+\Delta t d} = \dfrac{\beta_3+ \Delta t d \beta_3}{1+\Delta t d} = \beta_3,\\
		u_5^{n+1}&\leq \dfrac{\beta_5+\Delta t \dfrac{\tau_1}{\tau_2 }\beta_4}{1+\Delta t \tau_3} = \dfrac{\beta_5+\Delta t \tau_3\beta_5}{1+\Delta t \tau_3}=\beta_5.
	\end{align*}
\end{proof}
Next, we show that the sets of equilibria described in Proposition~\ref{prop:equi} are maintained by the discrete NSFD method. To this end, we set $\gamma(u)=\gamma_0$, for all $u$ and $\tau_0=0$. The result reads as follows.
\begin{prop}
	The NSFD method \eqref{eq:nsfd1} is dynamically consistent
	with respect to the equilibrium points of the model \eqref{eq:model} for all the values of the step size $\Delta t$.
\end{prop}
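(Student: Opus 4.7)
The plan is to prove that a vector $\boldsymbol{u}^{*}=(u_1^{*},\dots,u_5^{*})^{\mathrm{T}}$ is an equilibrium of the NSFD scheme \eqref{eq:nsfd1} (i.e.\ a constant sequence $\boldsymbol{u}^{n+1}=\boldsymbol{u}^{n}=\boldsymbol{u}^{*}$ for all $n$) if and only if it is an equilibrium of the continuous SH system \eqref{eq:modelhomo}, i.e.\ $\boldsymbol{F}(\boldsymbol{u}^{*})=\boldsymbol{0}$. The central observation driving the argument is that the semi-implicit reaction discretization in \eqref{eq:rhs-disc} was deliberately designed so that every nonlinear product appearing in $\boldsymbol{F}$ is split into a factor at time $t_n$ and a factor at time $t_{n+1}$; when both time levels coincide, these discrete terms reduce back to the continuous reaction terms. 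This will immediately give independence from $\Delta t$.

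First, I would substitute $\boldsymbol{u}^{n+1}=\boldsymbol{u}^{n}=\boldsymbol{u}^{*}$ into \eqref{eq:nsfd1}. The left-hand sides $(u_i^{n+1}-u_i^{n})/\Delta t$ all vanish identically, so the scheme reduces to the algebraic system $\bar{\boldsymbol{F}}(\boldsymbol{u}^{*},\boldsymbol{u}^{*})=\boldsymbol{0}$. Next, using $\gamma(u)\equiv \gamma_0$ and $\tau_0=0$, I would check component by component that $\bar{F}_i(\boldsymbol{u}^{*},\boldsymbol{u}^{*})=F_i(\boldsymbol{u}^{*})$ for $i=1,\dots,5$. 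For instance, $\bar{F}_1$ collapses to $r_1 (u_3^{*})^2-\gamma_0 u_1^{*}$; $\bar{F}_2$ to $\gamma_0 u_1^{*}-\tau_p u_2^{*}$; in $\bar{F}_3$ the split products $r_2 u_1^{n}u_3^{n+1}$ and $r_1 u_3^{n}u_3^{n+1}$ become $r_2 u_1^{*}u_3^{*}$ and $r_1 (u_3^{*})^2$; in $\bar{F}_4$ the factor $(\hat{m}-u_{4}^{n+1})u_{4}^{n}$ becomes $(\hat{m}-u_{4}^{*})u_{4}^{*}$; and $\bar{F}_5$ collapses to the original expression in $F_5$.

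These identities $\bar{\boldsymbol{F}}(\boldsymbol{u}^{*},\boldsymbol{u}^{*})=\boldsymbol{F}(\boldsymbol{u}^{*})$ yield both implications at once: any equilibrium of the continuous system satisfies $\boldsymbol{F}(\boldsymbol{u}^{*})=\boldsymbol{0}$ and hence is a fixed point of the scheme for every $\Delta t>0$; conversely, any fixed point of \eqref{eq:nsfd1} satisfies $\bar{\boldsymbol{F}}(\boldsymbol{u}^{*},\boldsymbol{u}^{*})=\boldsymbol{0}$, which coincides with $\boldsymbol{F}(\boldsymbol{u}^{*})=\boldsymbol{0}$. In particular, the disease-free equilibrium $\mathcal{E}_0=(0,0,0,\lambda_M/\sigma,0)$ is preserved, and so is any positive equilibrium supplied by Proposition \ref{prop:equi}. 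There is no conceptual obstacle in this argument; the only mild care required is to check each nonlinear term of $\bar{\boldsymbol{F}}$ individually to confirm that no product has been rearranged in a way that would break the identity $\bar{\boldsymbol{F}}(\boldsymbol{u}^{*},\boldsymbol{u}^{*})=\boldsymbol{F}(\boldsymbol{u}^{*})$, a routine verification built into the very design of the scheme.
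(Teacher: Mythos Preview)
Your argument is correct. Both you and the paper reduce the question to showing that the fixed-point condition of the scheme coincides with $\boldsymbol{F}(\boldsymbol{u}^*)=\boldsymbol{0}$, but the routes differ slightly. The paper first solves \eqref{eq:nsfd1} for $u_i^{n+1}$ to obtain the explicit form \eqref{eq:nsfd2}, then rewrites each update as $u_i^{n+1}=u_i^{n}+\Delta t\,G_i(\boldsymbol{u}^n)F_i(\boldsymbol{u}^n)$ with strictly positive multipliers $G_i$, and concludes from $G_i>0$ that $F_i(\boldsymbol{u}^n)=0$. Your approach bypasses this algebra: you stay with the implicit form \eqref{eq:nsfd1}, set both time levels equal, and verify directly that $\bar{F}_i(\boldsymbol{u}^*,\boldsymbol{u}^*)=F_i(\boldsymbol{u}^*)$ by inspection of each split product. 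Your route is more elementary and avoids deriving the explicit update; the paper's $G_iF_i$ decomposition, on the other hand, is reused immediately afterwards in Lemma~\ref{lemma:jacobian} to compute the discrete Jacobian, so it serves double duty there.
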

\begin{proof}
	In order to find the equilibrium points of the NSFD scheme \eqref{eq:nsfd1} we need to solve the system $\boldsymbol{u}^{n+1} = \boldsymbol{u}^n$, i.e
	$u_i^{n+1}=u_i^{n}$, for all $i=1,\dots,5$. We start by expressing the system \eqref{eq:nsfd2} in the form
	\begin{equation}\label{eq:nsfd3}
		\begin{aligned}
			u_i^{n+1}&=u_i^{n}+\Delta t G_i(\boldsymbol{u}^n) F_i(\boldsymbol{u}^n), \quad i=1,\dots,5,
		\end{aligned}
	\end{equation}
	where 
	\begin{equation}
		\begin{aligned}
			G_1(\boldsymbol{u}^n)&= \dfrac{1 }{1+\Delta t \gamma_0}, \, \, G_2(\boldsymbol{u}^n)=\dfrac{1}{1+\Delta t \tau_p }, \, \, G_3(\boldsymbol{u}^n)= \dfrac{1}{1+\Delta t(d+r_2u_1^n+r_1 u_3^n)},\\
			G_4(\boldsymbol{u}^n)&=\dfrac{1 }{1+\Delta t\left(\dfrac{\alpha_1 u_1^n}{1+\alpha_2 u_1^n} u_4^n+\sigma \right)}, \, \, G_5(\boldsymbol{u}^n)=\dfrac{1}{1+\Delta t \tau_3}.
		\end{aligned}
	\end{equation}
	By using \eqref{eq:nsfd3} we get that $G_i(\boldsymbol{u}^n) F_i(\boldsymbol{u}^n)=0$. Being $G_i$ positive functions, it follows that $F_i(\boldsymbol{u}^n) =0$, for $i=1,\dots,5$. Therefore, the NSFD system \eqref{eq:nsfd2} and the continuous-time model \eqref{eq:modelhomo} have the same sets of equilibria.
\end{proof}
We now analyze the local stability of the disease-free equilibrium point $\mathcal{E}_0$ for the NSFD method. We state some previous lemmas.
\begin{lemma}\label{lemma:jacobian}
	Let $\mathcal{E} \in \mathbb{R}^5$ be an equilibrium point, and $\mathcal{J}^{D}(\mathcal{E})$ be the Jacobian matrix evaluated at $\mathcal{E}$ for the discrete system. Then, the following identity holds
	\begin{equation}\label{eq:jac-dis}
		\mathcal{J}_{ij}^{D}(\mathcal{E}) = \delta_{ij}+G_i(\mathcal{E})	\mathcal{J}_{ij}^{C}(\mathcal{E}),
	\end{equation}
	for all $i,j$, where $\mathcal{J}^{C}(\mathcal{E})$ is the Jacobian matrix of the continuous problem evaluated at $\mathcal{E}$.
\end{lemma}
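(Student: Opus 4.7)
The plan is direct and computational. I would begin by reading off from the rearranged form \eqref{eq:nsfd3} that the NSFD update is the smooth map $\Phi:\mathbb{R}^5\to\mathbb{R}^5$ with components
\begin{equation*}
\Phi_i(\boldsymbol{u}) \;=\; u_i + \Delta t\, G_i(\boldsymbol{u})\, F_i(\boldsymbol{u}), \qquad i=1,\dots,5,
\end{equation*}
so that by construction $\mathcal{J}^{D}(\mathcal{E}) = D\Phi(\mathcal{E})$. Then a single application of the product rule gives, for all $i,j$,
\begin{equation*}
\frac{\partial \Phi_i}{\partial u_j}(\boldsymbol{u}) \;=\; \delta_{ij} \;+\; \Delta t\,\frac{\partial G_i}{\partial u_j}(\boldsymbol{u})\,F_i(\boldsymbol{u}) \;+\; \Delta t\, G_i(\boldsymbol{u})\,\frac{\partial F_i}{\partial u_j}(\boldsymbol{u}).
\end{equation*}

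The decisive step is now the invocation of the previous proposition, which shows that the NSFD scheme and the continuous model share the same set of equilibria; hence $F_i(\mathcal{E}) = 0$ for every $i$. This single identity kills the middle term above at $\boldsymbol{u}=\mathcal{E}$, so only the Kronecker contribution and the term $G_i(\mathcal{E})\,\partial_j F_i(\mathcal{E})$ survive. Identifying $\partial_j F_i(\mathcal{E})$ with the $(i,j)$ entry of $\mathcal{J}^{C}(\mathcal{E})$ yields \eqref{eq:jac-dis} (the $\Delta t$ factor being absorbed into the convention for $G_i$ adopted in the statement, consistently with the displayed formulas \eqref{eq:nsfd2}).

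There is no genuine obstacle here; the proof is essentially a one-line computation. The substantive content of the lemma is entirely concentrated in the fact that the derivatives of the non-standard denominators $G_i$ do not enter the final formula, which is exactly the consequence of $F_i(\mathcal{E})=0$. Without this vanishing, one would pick up additional terms of the form $\Delta t\,\partial_j G_i(\mathcal{E})\,F_i(\mathcal{E})$ and the elegant factorization $\mathcal{J}^{D}_{ij} = \delta_{ij}+G_i(\mathcal{E})\,\mathcal{J}^{C}_{ij}(\mathcal{E})$ would break down. This is precisely the structural property that will make the subsequent local stability analysis of the disease-free equilibrium reducible to the continuous Jacobian by a diagonal rescaling plus shift.
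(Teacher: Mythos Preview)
Your proof is correct and follows exactly the same route as the paper: differentiate the explicit form \eqref{eq:nsfd3} using the product rule, then invoke $F_i(\mathcal{E})=0$ to kill the $\partial_j G_i$ term. Your parenthetical remark about the $\Delta t$ factor being absorbed into $G_i$ is apt, since the paper silently switches conventions between the definition of $G_i$ in the equilibrium-points proposition and the formulas for $G_i(\mathcal{E}_0)$ in the stability proposition.
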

\begin{proof}
	If $\mathcal{E}$ is an equilibrium point, then $F_i(\mathcal{E})=0$, for $i=1,\dots,5$. So, from equations \eqref{eq:nsfd3} we obtain
	\begin{equation*}\label{eq:jac-dis-2}
		\mathcal{J}_{ij}^{D}(\mathcal{E}) = \delta_{ij}+G_i(\mathcal{E})\dfrac{\partial F_i}{\partial u_j} (\mathcal{E})+\dfrac{\partial G_i}{\partial u_j} (\mathcal{E})F_i(\mathcal{E}) = \delta_{ij}+G_i(\mathcal{E})	\mathcal{J}_{ij}^{C}(\mathcal{E}).
	\end{equation*}
	This concludes the proof.
\end{proof}

\begin{lemma}\label{lemma:disc-stability}
	Consider the non-linear system $X_{t+1} = \Psi(X_t)$, where $\Psi:\mathbb{R}^n \to \mathbb{R}^n$ is a $C^1$-diffeomorphism with a fixed point, $X_0$. Then a steady-state equilibrium, $X_0$, is locally asymptotically stable if and only if the moduli of all eigenvalues of the Jacobian matrix, $\mathcal{J}^{D}(X_0)$, are smaller than one.
\end{lemma}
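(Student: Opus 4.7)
The plan is to prove the two directions separately, treating the sufficient direction via a contraction-mapping argument in an adapted norm, and the necessary direction via a linearization argument showing that an eigenvalue of modulus at least one prevents contraction on a suitable invariant subspace.

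For sufficiency, I would invoke the classical fact that for any matrix $A\in\mathbb{R}^{n\times n}$ and any $\varepsilon>0$ there exists a norm $\|\cdot\|_*$ on $\mathbb{R}^n$ whose induced operator norm satisfies $\|A\|_*\le \rho(A)+\varepsilon$, where $\rho(A)$ denotes the spectral radius. Setting $A=\mathcal{J}^D(X_0)$ and choosing $\varepsilon$ so small that $\alpha:=\|A\|_*<1$ is possible precisely because every eigenvalue has modulus strictly less than one. Next I would Taylor-expand $\Psi$ around the fixed point, writing
\begin{equation*}
\Psi(X_0+h)-X_0 = A h + R(h), \qquad \|R(h)\|_*=o(\|h\|_*)\ \text{as}\ h\to 0.
\end{equation*}
Picking $\delta>0$ small enough that $\|R(h)\|_*\le \tfrac{1-\alpha}{2}\|h\|_*$ on the ball $\{\|h\|_*\le\delta\}$, I would conclude
\begin{equation*}
\|\Psi(X_0+h)-X_0\|_* \le \tfrac{1+\alpha}{2}\|h\|_*,
\end{equation*}
so the iterates beginning in this ball contract geometrically to $X_0$, yielding local asymptotic stability.

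For necessity I would argue contrapositively: assume some eigenvalue $\lambda$ of $\mathcal{J}^D(X_0)$ has $|\lambda|\ge 1$ and show that $X_0$ is not locally asymptotically stable. Taking a (possibly complex) eigenvector $v$ and analysing $\Psi^n(X_0+\varepsilon v)$, the linear part produces a component of size $\varepsilon|\lambda|^n\|v\|$ which does not decay; a standard Hartman--Grobman-type argument, applied to the $C^1$-diffeomorphism $\Psi$, lets the nonlinear remainder be controlled on the unstable/neutral invariant subspace so that orbits starting arbitrarily close to $X_0$ fail to converge to $X_0$. For $|\lambda|>1$ this produces actual expansion; for $|\lambda|=1$ the conclusion is that the linear part is already non-contracting and so asymptotic convergence cannot hold uniformly in a neighborhood.

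The main obstacle is the boundary case $|\lambda|=1$ in the necessity direction, which in general requires center-manifold techniques rather than a pure linearization argument, and strictly speaking leaves room for the nonlinear terms to drive asymptotic behavior. Since the lemma is stated as a classical fact about discrete dynamical systems, I would either handle this case by a careful Lyapunov argument tailored to $\Psi$, or, more expediently, cite a standard reference (e.g., Elaydi's \emph{An Introduction to Difference Equations}) where this equivalence is proved in full generality for $C^1$-diffeomorphisms. In the application to the NSFD scheme of the next subsection, the combination of this lemma with the identity \eqref{eq:jac-dis} of Lemma~\ref{lemma:jacobian} will be the key tool for comparing the discrete and continuous stability conditions at $\mathcal{E}_0$.
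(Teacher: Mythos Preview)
The paper does not prove this lemma at all: it is stated as a classical fact from discrete dynamical systems and immediately used in the proposition that follows. Your approach therefore goes well beyond what the paper does.

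Your sufficiency argument (adapted norm with $\|A\|_*\le\rho(A)+\varepsilon$ followed by a Taylor estimate) is correct and standard. For the necessity direction, your instinct is right that the boundary case $|\lambda|=1$ is the obstruction, and in fact the ``only if'' direction as stated is not literally true without extra hypotheses: there exist $C^1$ maps (e.g.\ $x\mapsto x-x^3$ near $0$) for which the linearization has an eigenvalue on the unit circle yet the fixed point is locally asymptotically stable. So your proposed Hartman--Grobman/Lyapunov route cannot close this gap, because the gap is in the statement, not in your method. Your fallback of citing a standard reference (Elaydi) is exactly the spirit in which the paper uses the lemma, and is the appropriate resolution here; in the application to $\mathcal{E}_0$ only the sufficiency direction is actually invoked, since all computed eigenvalues lie strictly inside the unit disk.
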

The following result is obtained, in agreement with Proposition~\ref{prop:sta-dis-free}. 
\begin{prop}
	The NSFD method \eqref{eq:nsfd1} is dynamically consistent with respect to the stability of the disease-free equilibrium $\mathcal{E}_0=(0,0,0,\lambda_M/\sigma,0)$ of the model \eqref{eq:modelhomo}, that is, $\mathcal{E}_0$ is locally asymptotically stable for every choice of positive parameters and for all step size values $\Delta t$.
\end{prop}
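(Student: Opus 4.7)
The plan is to reduce stability to an eigenvalue modulus computation via Lemma~\ref{lemma:disc-stability}: it suffices to show that every eigenvalue of the discrete Jacobian $\mathcal{J}^{D}(\mathcal{E}_0)$ has modulus strictly smaller than one. Lemma~\ref{lemma:jacobian} expresses the entries of $\mathcal{J}^{D}(\mathcal{E}_0)$ in terms of those of the continuous Jacobian $\mathcal{J}^{C}(\mathcal{E}_0)$ and the positive weights $G_i(\mathcal{E}_0)$ from \eqref{eq:nsfd3}; crucially, the two matrices share the same sparsity pattern, so the structural argument that yields the continuous spectrum carries over verbatim to the discrete one.

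First I compute $\mathcal{J}^{C}(\mathcal{E}_0)$ by direct differentiation of $F_1,\dots,F_5$ at $\mathcal{E}_0=(0,0,0,\lambda_M/\sigma,0)$. The resulting $5\times 5$ matrix has diagonal $(-\gamma_0,-\tau_p,-d,-\sigma,-\tau_3)$, and its only off-diagonal nonzero entries sit in column~$1$ (from $\partial_{u_1}F_2,\partial_{u_1}F_4,\partial_{u_1}F_5$) and at position $(3,5)$ (the entry $\tau_S$ from $\partial_{u_5}F_3$). Because columns $2$, $3$, and $4$ of $\lambda I-\mathcal{J}^{C}(\mathcal{E}_0)$ each contain exactly one nonzero entry, successive cofactor expansions along those columns collapse the characteristic polynomial to the product $(\lambda+\gamma_0)(\lambda+\tau_p)(\lambda+d)(\lambda+\sigma)(\lambda+\tau_3)$, in agreement with Proposition~\ref{prop:sta-dis-free}.

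Since $\mathcal{J}^{D}(\mathcal{E}_0)$ inherits that same sparsity pattern, the identical cofactor expansion factors its characteristic polynomial as a product of five linear factors determined by its diagonal entries. Computing each diagonal entry through Lemma~\ref{lemma:jacobian} and the explicit form of $G_i$ in \eqref{eq:nsfd3} (at $\mathcal{E}_0$ each denominator collapses because $u_1=u_3=0$), the five eigenvalues simplify to the compact form
\begin{equation*}
\mu_i=\frac{1}{1+\Delta t\,c_i},\qquad c_i\in\{\gamma_0,\tau_p,d,\sigma,\tau_3\},\quad i=1,\dots,5.
\end{equation*}
Since all $c_i>0$ and $\Delta t>0$, each $\mu_i$ lies in $(0,1)$; Lemma~\ref{lemma:disc-stability} then yields local asymptotic stability of $\mathcal{E}_0$ for every positive parameter choice and every step size $\Delta t>0$.

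The only real obstacle is recognizing the almost-triangular structure of $\mathcal{J}^{C}(\mathcal{E}_0)$; once it is identified, the characteristic polynomial factors without any assumption relating the model parameters, and the NSFD rescaling preserves both the factorization and the strict positivity of the eigenvalues. No step-size restriction appears because the denominators $1+\Delta t\,c_i$ remain strictly greater than one for every $\Delta t>0$, which is precisely the unconditional dynamical-consistency property that the NSFD philosophy is designed to deliver.
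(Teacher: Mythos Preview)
Your proof is correct and follows essentially the same route as the paper: compute $\mathcal{J}^{C}(\mathcal{E}_0)$, apply Lemma~\ref{lemma:jacobian} to obtain $\mathcal{J}^{D}(\mathcal{E}_0)$, read off the diagonal eigenvalues $\mu_i=1/(1+\Delta t\,c_i)$ with $c_i\in\{\gamma_0,\tau_p,d,\sigma,\tau_3\}$, and conclude via Lemma~\ref{lemma:disc-stability}. Your only addition is an explicit justification, via the column-sparsity observation, of why the eigenvalues coincide with the diagonal entries, which the paper simply asserts.
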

\begin{proof}
	The jacobian matrix of the continuous system \eqref{eq:modelhomo} at $\mathcal{E}_0$ is
	\begin{equation*}
		\mathcal{J}^{C}(\mathcal{E}_0) = 
		\begin{pmatrix}
			-\gamma_0&0&0&0&0\\
			\gamma_0&-\tau_p&0&0&0\\
			0&0&-d&0&\tau_S\\
			\alpha_1\left(\hat{m}-\frac{\lambda_M}{\sigma}\right)\frac{\lambda_M}{\sigma}&0&0&-\sigma&0\\
			\tau_1\frac{\lambda_M}{\sigma}&0&0&0&-\tau_3
		\end{pmatrix}.
	\end{equation*}
	In addition, we observe that 
	\begin{equation}
		\begin{aligned}
			G_1(\mathcal{E}_0)&=\dfrac{\Delta t }{1+\gamma_0\Delta t}, \,\,  G_2(\mathcal{E}_0) = \dfrac{\Delta t}{1+\tau_p \Delta t  }, \, \, G_3(\mathcal{E}_0) = \dfrac{\Delta t}{1+d\Delta t},\\
			G_4(\mathcal{E}_0)&=\dfrac{\Delta t}{1+\sigma \Delta t },\, \, G_5(\mathcal{E}_0)=\dfrac{\Delta t}{1+\Delta t \tau_3}.
		\end{aligned}
	\end{equation}
	Then by Lemma \ref{lemma:jacobian} we get that the Jacobian of the discrete system \eqref{eq:nsfd2} evaluated at  disease-free equilibrium  $\mathcal{E}_0$ is 
	\begin{equation*}
		\mathcal{J}^{D}(\mathcal{E}_0) = 
		\begin{pmatrix}
			1-\frac{ \gamma_0 \Delta t}{1+\gamma_0\Delta t}&0&0&0&0\\
			\gamma_0&1-\frac{\tau_p\Delta t }{1+\tau_p \Delta t  },&0&0&0\\
			0&0&1-\frac{d \Delta t}{1+d\Delta t}&0&\tau_S\\
			\alpha_1\left(\hat{m}-\frac{\lambda_M}{\sigma}\right)\frac{\lambda_M}{\sigma}&0&0&1-\frac{\sigma \Delta t}{1+\sigma \Delta t }&0\\
			\tau_1\frac{\lambda_M}{\sigma}&0&0&0&1-\frac{\tau_3\Delta t}{1+\Delta t \tau_3}
		\end{pmatrix},
	\end{equation*}
	which has eigenvalues $$\lambda_1=\frac{1}{1+\gamma_0\Delta t}, \,\, \lambda_2= \frac{1}{1+\tau_p \Delta t  }  \,\, \lambda_3= \frac{1}{1+d \Delta t  }, \,\, \lambda_4=  \frac{1}{1+\sigma \Delta t  }, \,\, \lambda_5=  \frac{1}{1+\tau_3\Delta t }.$$
	We observe that $|\lambda_i|<1$, for $i=1,\dots,5$. So, according to Lemma \ref{lemma:disc-stability} the equilibrium $\mathcal{E}_0$ is locally asymptotically stable.
\end{proof}
\section{Numerical examples}\label{sec:numexa}
The main objective of this section is to present a series of examples that demonstrate the robustness of the constructed FV scheme. Throughout all the tests, certain model parameters are kept fixed while others are varied in order to explore different dynamical scenarios and to compare the results with those reported in the existing literature. In Table \ref{tab:par}, we specify the fixed and variable parameters along with their corresponding units. These values are taken from the works of Pujo-Menjouet \textit{et al.} \cite{andrade2020modeling,ciuperca2024qualitative,estavoyer2025spatial}. The meshes used in the 2D examples were generated with the Gmsh software \cite{geuzaine2009gmsh}, employing a Frontal-Delaunay algorithm that produces meshes satisfying the orthogonality condition stated in Definition \ref{def:am}.
\begin{table}[t]
	\centering
	\begin{tabular}{c|c|c||c|c|c}
		\toprule
		Parameter& Value & Units& Parameter& Value & Units\\
		\midrule
		$r_1$      &    0.1    & $L\,(\mathrm{mol})^{-1}(\mathrm{month})^{-1}$ & $\alpha_1 $&   1   & $L^2\,(\mathrm{mol})^{-2}(\mathrm{month})^{-1}$ \\
		$r_2$      &    0.1    & $L\,(\mathrm{mol})^{-1}(\mathrm{month})^{-1}$ & $\alpha_2 $&   1   & $L\,(\mathrm{mol})^{-1}$\\
		$d$        & variable  &            $(\mathrm{month})^{-1}$            & $\lambda_M$& 0.001 & $\mathrm{mol}\,L^{-1}\,(\mathrm{months})^{-1}$\\
		$\gamma_0$ &    0.05   &            $(\mathrm{month})^{-1}$            & $\hat{m}$  &   1 & $\mathrm{mol}\,L^{-1}$\\
		$\tau_1$   &      1    &            $L\,(\mathrm{mol})^{-1}$           & $\sigma$   & 0.001 & $(\mathrm{months})^{-1}$\\
		$\tau_2$   &      1    &            $L\, (\mathrm{month})^{-1}$        & $D_1$      & 1 & $m^2\,(\mathrm{months})^{-1}$\\
		$\tau_3$   &      1    &            $(\mathrm{month})^{-1}$            & $D_I$      & 1 & $m^2\,(\mathrm{months})^{-1}$\\
		$\tau_p$   &   0.03    &            $(\mathrm{month})^{-1}$            & $\nu_1$    & 1 & $m^2\,(\mathrm{months})^{-1}$\\
		$\tau_S$   &      1    &            $(\mathrm{month})^{-1}$            & $\nu_2$    & 1 & $m^2\,(\mathrm{months})^{-1}$\\
		$C$        &      1    &            $L^n\, (\mathrm{mol})^{-n}$        & $\alpha$   & variable & $m^2\,L\,(\mathrm{mol})^{-1}$\\ 
		$\nu$        &      2    &                      $-$                      &&&\\ \bottomrule
	\end{tabular}
	\caption{Parameters used in the numerical examples, along with their corresponding values and units.}
	\label{tab:par}
\end{table}
\subsection{Preliminaries} Let us detailed the numerical implementation of FV scheme \eqref{eq:fvm-1}-\eqref{eq:rhs-disc}. Given an admissible mesh $\mathcal{T}$, let us denote by $N_e$ the numbers of control volumes of $\mathcal{T}$ and by $\mathbf{u}_i^n\in \mathbb{R}^{N_e}$, $i=1,\dots,5$, the vectors of unknowns at time $t=t^n$. To state the algorithm, we define the following vectors of $\mathbb{R}^{N_e}$ which are evaluated at time $t=t_n$:
\begin{equation*}
	\boldsymbol{\gamma}^n:=\big(\gamma(u_{4,K}^n\big)_{K=1}^{N_e},\,\, \mathbf{a}^n:= \left(\dfrac{\tau_S}{1+C(u_{1,K}^n)^{\nu}}\right)_{K=1}^{N_e}, \,\, \mathbf{c}^n:=\left(\dfrac{\alpha_1 u_{1,K}^n}{1+\alpha_2 u_{1,K}^n}\right)_{K=1}^{N_e},\,\, \mathbf{b}^n:=\left(\dfrac{\tau_1 u_{1,K}^n}{1+\tau_2 u_{1,K}^n}\right)_{K=1}^{N_e},
\end{equation*}
and the constant vector $\boldsymbol{\lambda}_M:=(\lambda_{M,K})_{K=1}^{N_e}$. We also introduce the Hadamard product $\mathbf{u}*\mathbf{v}$, which is defined for vectors $\mathbf{u},\mathbf{v}\in \mathbb{R}^{N_e}$ as $(\mathbf{u}*\mathbf{v})_j = u_jv_j,$ for $j=1,\dots,N_e$, we denote by $\mathrm{diag}(\mathbf{v})$ the diagonal matrix with the vector $\mathbf{v}\in \mathbb{R}^{N_e}$ in the diagonal, by $\mathbf{I}\in \mathbb{R}^{N_e\times N_e}$ the identity matrix of size $N_e\times N_e$, and by $\mathbf{1}\in \mathbb{R}^{N_e}$ a vector of ones of size $N_e$. Let $\mathcal{L}\in \mathbb{R}^{N_e\times N_e}$ be the discretization of the Laplacian operator and $\mathcal{C}(\boldsymbol{u}_4;\boldsymbol{u}_1)$ the discretization of the numerical flux approximating the term $\nabla \cdot (\chi(u_4) \nabla u_1)$. Within this notation, the equation \eqref{eq:fvm-1} for the $A\beta$-oligomers becomes
\begin{equation*}
	\dfrac{u_{1,K}^{n+1}-u_{1,K}^n}{\Delta t}-d_1 (\mathcal{L} \mathbf{u}_1^{n+1})_K =  r_1 (u_{3,K}^n)^2-\gamma(u_{4,K}^n) u_{1,K}^{n+1}-\tau_0 u_{1,K}^{n+1},
\end{equation*}
which is linear in $\mathbf{u}_1^{n+1}$. Then, by rearranging the terms in vector form, we obtain
\begin{equation}\label{eq:disc-fvm1}
	[\mathbf{I}+\Delta t d_1 \mathcal{L}+\mathrm{diag}(\boldsymbol{\gamma}^n+\tau_0 \mathbf{1})]\mathbf{u}_1^{n+1} = \mathbf{u}_1^{n}+\Delta t r_1 \big(\mathbf{u}_3^n *\mathbf{u}_3^n\big). 
\end{equation}
We also notice that equations \eqref{eq:fvm-3} for oligomers and \eqref{eq:fvm-5} for interleukins, are linear in $\mathbf{u}_3^{n+1}$ and  $\mathbf{u}_5^{n+1}$, respectively, so we can write
\begin{equation}\label{eq:disc-fvm35}
	\begin{aligned}
		[\mathbf{I}+\Delta t d_3 \mathcal{L}+\mathrm{diag}(d+r_2\mathbf{u}_1^n+r_1\mathbf{u}_3^n)]\mathbf{u}_3^{n+1} &= \mathbf{u}_3^{n}+\Delta t \big(\mathbf{a}^n* \mathbf{u}_5^n\big),\\
		[\mathbf{I}+\Delta t d_5 \mathcal{L}+\Delta t \tau_3\mathbf{I}]\mathbf{u}_3^{n+1} &= \mathbf{u}_5^{n}+\Delta t \big(\mathbf{b}^n* \mathbf{u}_4^n\big).
	\end{aligned}
\end{equation}
The second equation \eqref{eq:fvm-2} for the Amyloids plaques can be put directly in vector form as
\begin{equation}\label{eq:disc-fvm2}
	[(1+\Delta \tau_p)\mathbf{I}] \mathbf{u}_2^{n+1} = \mathbf{u}_2^n+\Delta t\big( \boldsymbol{\gamma}^n* \mathbf{u}_1^n\big).
\end{equation}
For the nonlinear system \eqref{eq:fvm-4} of microglial cells, we employ a Newton–Raphson solver. To describe it, let us set $\mathbf{M}:=\mathbf{u}_4^{n+1}$. Given $\mathbf{u}_1^{n+1}$, the nonlinear system \eqref{eq:fvm-4} can be put in vector form as
\begin{equation}\label{eq:disc-fvm4}
	\mathbf{M}-\mathbf{u}_4^{n}-\Delta td_4 \mathcal{L}\,\mathbf{M} +\Delta t \mathcal{C}(\mathbf{M};\mathbf{u}_1^{n+1})-\bar{\mathbf{F}}_4(\mathbf{u}^n,\mathbf{M}) = 0,
\end{equation}
where 
\begin{equation*}
	\bar{\mathbf{F}}_4(\mathbf{u}^n,\mathbf{M}) = \diag(\mathbf{s}^n)(\hat{m}\mathbf{1}-\mathbf{M})-\sigma \mathbf{M}+\boldsymbol{\lambda}_M, \quad \mathbf{s}^n:=\mathbf{c}^n* \mathbf{u}_4^n.
\end{equation*}
We define the nonlinear residual $\mathbf{R}$ as the left-hand side of \eqref{eq:disc-fvm4}. So we look for $\mathbf{u}_4^{n+1}$ such that $\mathbf{R}\mathbf(\mathbf{u}_4^{n+1})=0$. We can also compute the Jacobian of the approximation by
\begin{equation}\label{eq:jac-nr}
	J(\mathbf{M})= \dfrac{\partial \mathbf{R}}{\partial \mathbf{M}} = \mathbf{I}-\Delta d_4 \mathcal{L}+ \Delta t\dfrac{\partial \mathcal{C}}{\partial \mathbf{M}}(\mathbf{M};\mathbf{u}_1^{n+1})+\Delta t \mathrm{diag}(\mathbf{s}^n+\sigma \mathbf{1}).
\end{equation}
Summarizing the above, we can formulate an algorithm for the coupled FV scheme, which is detailed in Algorithm 1.
\begin{algorithm}[t]\label{eq:algo}
	\caption{Semi-implicit FV scheme }
	\begin{algorithmic}[1]
		\Require $\mathbf{u}_1^0,\mathbf{u}_2^0,\mathbf{u}_3^0,\mathbf{u}_4^0,\mathbf{u}_5^0$, matrix $\mathcal{L}$, parameters, tolerance     $\varepsilon$
		\State $\mathbf{u}_i^n\gets \mathbf{u}_i^0$, $i=1,\dots,5$,  \, $n \gets 0$
		\State 
		$\Delta t\gets \mathrm{CFL}\cdot \min\left\{\frac{1}{\gamma_{\max}},\frac{1}{r_1\beta_3+\tau_S},\frac{\alpha_2}{\alpha_2+\hat{m}\alpha_1},\frac{\tau_2}{\tau_1}\right\}$
		\While{$t<T$} 
		\State Solve linear systems \eqref{eq:disc-fvm1}, \eqref{eq:disc-fvm2}, \eqref{eq:disc-fvm35}:
		$\mathbf{A}_i \mathbf{x}_i
		= \mathbf{b}_i,  \, i=1,2,3,5.$
		\State $\mathbf{u}_i^{n+1}\gets \mathbf{x}_i$, $i=1,2,3,5$
		\State Initialize $\mathbf{u}_4^{(0)} \gets \mathbf{u}_4^{\,n}$
		
		\For{$j = 0,1,2,\dots$}
		
		\State Build the residual from \eqref{eq:disc-fvm4}
		
		\State $\mathbf{R}(\mathbf{u}_4^{(j)}) \gets
		\mathbf{u}_4^{(j)} - \mathbf{u}_4^{\,n}
		- \Delta t\,d_4 \mathcal{L}\,\mathbf{u}_4^{(j)}
		+ \Delta t\,\mathcal{C}\big(\mathbf{u}_4^{(j)};\mathbf{u}_1^{\,n+1}\big)
		- \Delta t\,\bar{\mathbf{F}}_4\big(\mathbf{u}^{\,n},\mathbf{u}_4^{(j)}\big)$
		
		\State Compute the residual norm $\|\mathbf{R}(\mathbf{u}_4^{(j)})\|$
		
		\If{$\|\mathbf{R}(\mathbf{u}_4^{(j)})\| < \varepsilon$}
		\State \textbf{break}
		\EndIf
		
		\State Build the Jacobian $J(\mathbf{u}_4^{(j)})$ from \eqref{eq:jac-nr}
		
		\State Solve the linear system:
		$J(\mathbf{u}_4^{(j)})\,\delta \mathbf{u}_4^{(j)}
		= -\mathbf{R}(\mathbf{u}_4^{(j)})$
		
		\State Update: $\mathbf{u}_4^{(j+1)}\gets \mathbf{u}_4^{(j)} + \delta \mathbf{u}_4^{(j)}.$
		
		\EndFor
		
		\State Set $\mathbf{u}_4^{\,n+1} \gets \mathbf{u}_4^{(j+1)}$
		\State $t\gets t+ \Delta t$, \,  $n\gets n+1$
		\EndWhile
		\Ensure $\mathbf{u}_1^N,\mathbf{u}_2^N,\mathbf{u}_3^N,\mathbf{u}_4^N,\mathbf{u}_5^N$
	\end{algorithmic}
\end{algorithm}
For all examples, we implemented this numerical scheme using the MATLAB software. For the solution of the linear systems \eqref{eq:disc-fvm1}, \eqref{eq:disc-fvm35}, \eqref{eq:disc-fvm2} we make use of the backslash MATLAB function. 
\begin{figure}[t]
	\centering
	\includegraphics[scale=0.25]{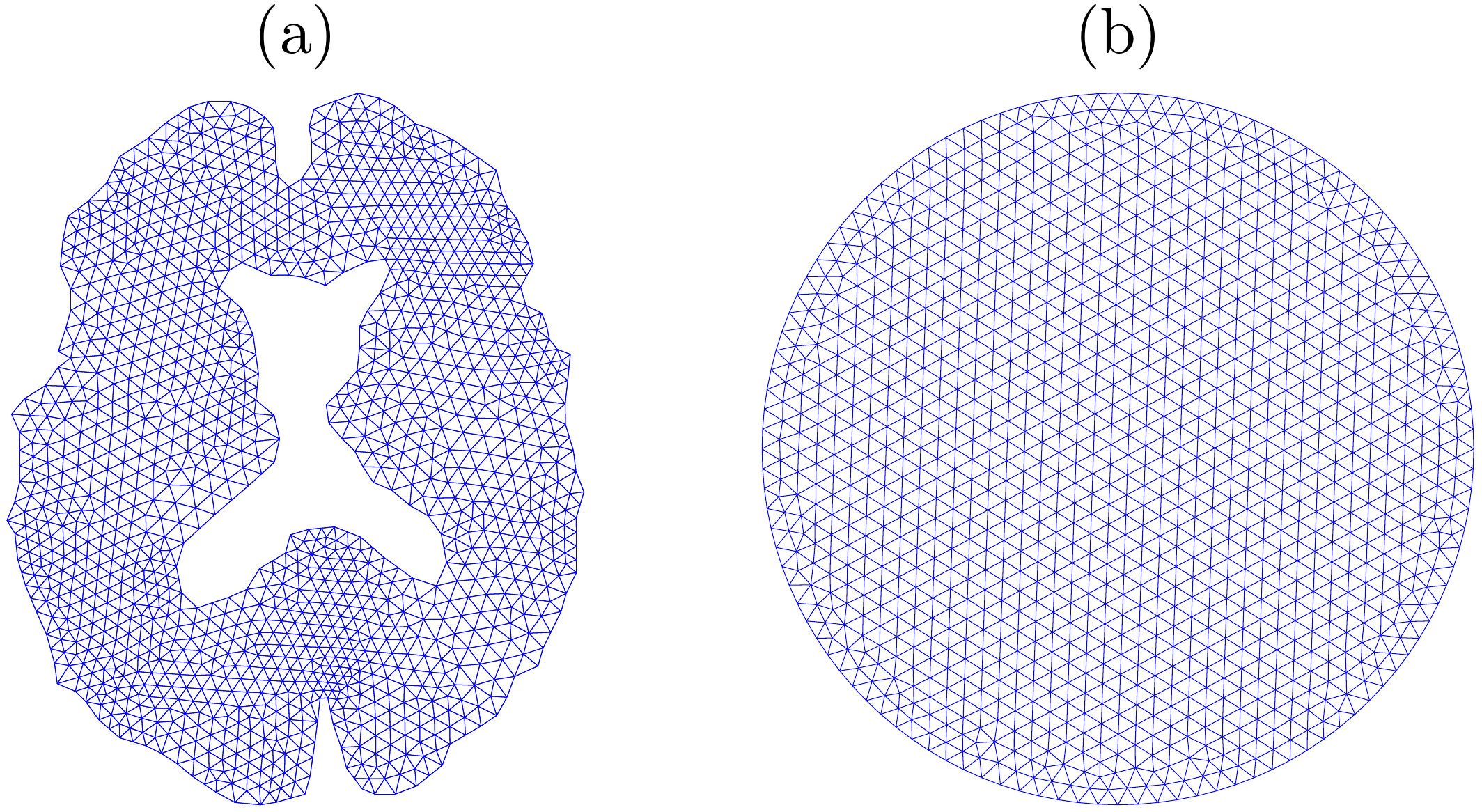}
	\caption{(a) Brain-shaped domain and reference mesh with $2658$ triangles. (b) Circular domain and reference mesh with $2358$ triangles.}\label{fig:meshes}
\end{figure}
\subsection{Example 1. Robustness of NSFD discretization.} 
In this example, we focus on the NSFD scheme \eqref{eq:nsfd2}, which approximates the solution of the spatially homogeneous model \eqref{eq:modelhomo} studied in \cite{ciuperca2024qualitative}. The goal is to test the robustness of the method with respect to the time step size. To do so, we consider a sequence of $\Delta t \in \{0.5,1.3,2\}$ and perform simulations until $T=200$. We compare the stability of the numerical solution obtained by NSFD with the traditional explicit Euler scheme. 
In this test, we consider $d=0.15$ and the rest of the parameters as in Table \ref{tab:par}. Within these parameters, the invariant rectangle $\mathcal{R}$ is defined by 
\begin{equation}\label{eq:rtangle1}
	\mathcal{R} = [0,88.1]\times[0,148.14]\times [0, 6.66]\times [0,1]\times [0, 1],
\end{equation}
and the condition $\hat{m}\geq \lambda_M/\sigma$ is fulfilled. We set an initial condition given by $\boldsymbol{u}^{0} = (0.0004,0,0.003,1,0.4)^{\mathrm{T}}$ belonging to $\mathcal{R}$. According to \cite{ciuperca2024qualitative}, this configuration leads the system to converge toward a positive equilibrium. In Figure \ref{fig:exa1}, we present the numerical results obtained using the NSFD scheme \eqref{eq:nsfd2} (right column) and the Euler scheme (left column). For the first time step, $\Delta t = 0.5$, we observe (first row) that both schemes remain stable and the numerical solutions lie within the invariant rectangle $\mathcal{R}$. However, for larger time steps, $\Delta t = 1.3$ and $\Delta t = 2$ (second and third rows, respectively), spurious oscillations appear in the 
{explicit}
Euler scheme, clearly violating the preservation of the invariant region of $\mathcal{R}$. In contrast, our NSFD scheme remains stable with respect to the time step $\Delta t$, in the sense that it preserves the dynamical properties of the continuous system regardless of the chosen time step.
\begin{figure}[ht!]
	\centering
	\includegraphics[scale=0.5]{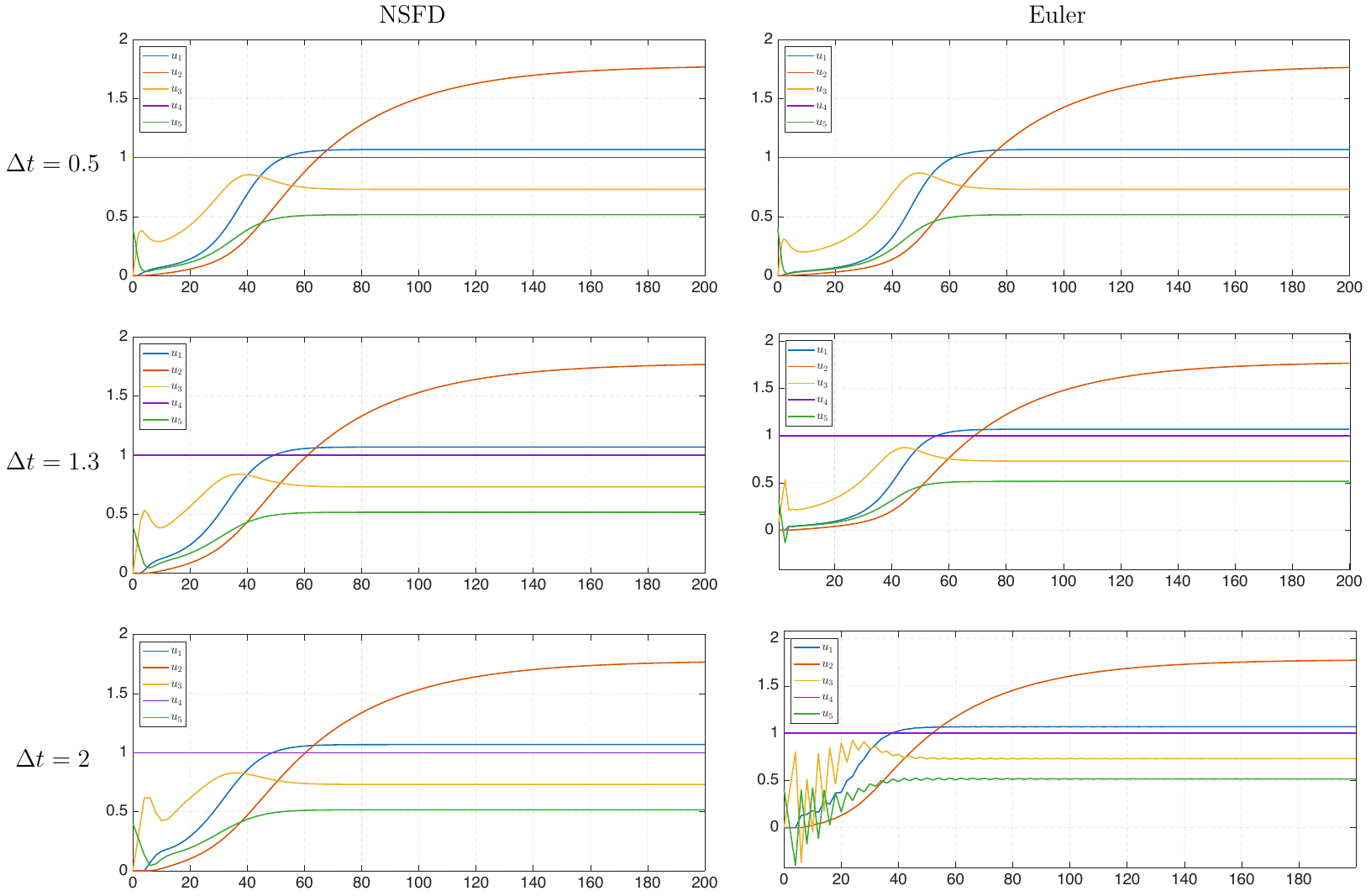}
	\caption{Numerical solutions for the five populations $u_i(t)$, $i=1,\dots,5$ for the SH model \eqref{eq:modelhomo}, with $0\leq t\leq 200$ computed with NSFD (first column) and Euler (second column) numerical schemes with step sizes $\Delta t\in \{0.5,1.3,2\}$.}\label{fig:exa1}
\end{figure}

\subsection{Example 2. Chemotaxis effect}  In this example, we study the chemotactic response of microglial cells to an increased population of oligomers. This chemotactic effect leads to the activation of microglial cells in the presence of oligomers, triggering an inflammatory reaction characterized by the production of interleukins. Furthermore, we highlight the ability of our scheme to operate effectively on complex geometrical domains. To this end, we consider a brain-shaped domain, as shown in Figure \ref{fig:meshes} (a), with a triangulation consisting of $12289$ elements satisfying the conditions in Definition \ref{def:am}. We employ $d=0.15$, $\alpha=24$ and the rest parameters of the parameters as in Table \ref{tab:par}. The initial condition $\boldsymbol{u}^0$ introduces small random perturbations in the oligomers $u_1$ around the value $0.1$, while the remaining variables are initialized with spatially homogeneous or localized distributions. In particular, $u_4$ presents two Gaussian peaks centered at points $\boldsymbol{x}_1$ and $\boldsymbol{x}_2$ in $\Omega$ (see first column of Figure \ref{fig:exa2}), representing localized concentrations of microglial cells within the domain, i.e. we set $\boldsymbol{u}^{0}=(u_{1,K}^{0},\dots,u_{5,K}^{0})_{K\in \mathcal{T}}$  is set as $u_{1,K}^{0}=0.1+ 0.01(1-2 r_K)$, $r_K\sim \mathcal{U}(0,1)$, $u_{2,K}^0 =0, u_{3,K}^0 = 0.1$, $$u_{4,K}^0 = 0.5+0.05\exp\left(-\dfrac{\|\boldsymbol{x}_K-\boldsymbol{x}_1\|^2}{40}\right)+0.05\exp\left(-\dfrac{\|\boldsymbol{x}_K-\boldsymbol{x}_2\|^2}{40}\right),$$ $u_{5,K} = 0.2$, for each $K\in \mathcal{T}$.The simulation is carried out up to $T = 50$, and the numerical results are presented in Figure \ref{fig:exa2} for specific intermediate times. We observe that the concentration of microglial cells increases in correspondence with the spatial distribution of oligomers. Furthermore, the invariance of the rectangle \eqref{eq:rtangle1} is preserved by the FV scheme \eqref{eq:fvm-1}-\eqref{eq:fvm-5}.
\begin{figure}[ht!]
	\centering
	\includegraphics[scale=0.18]{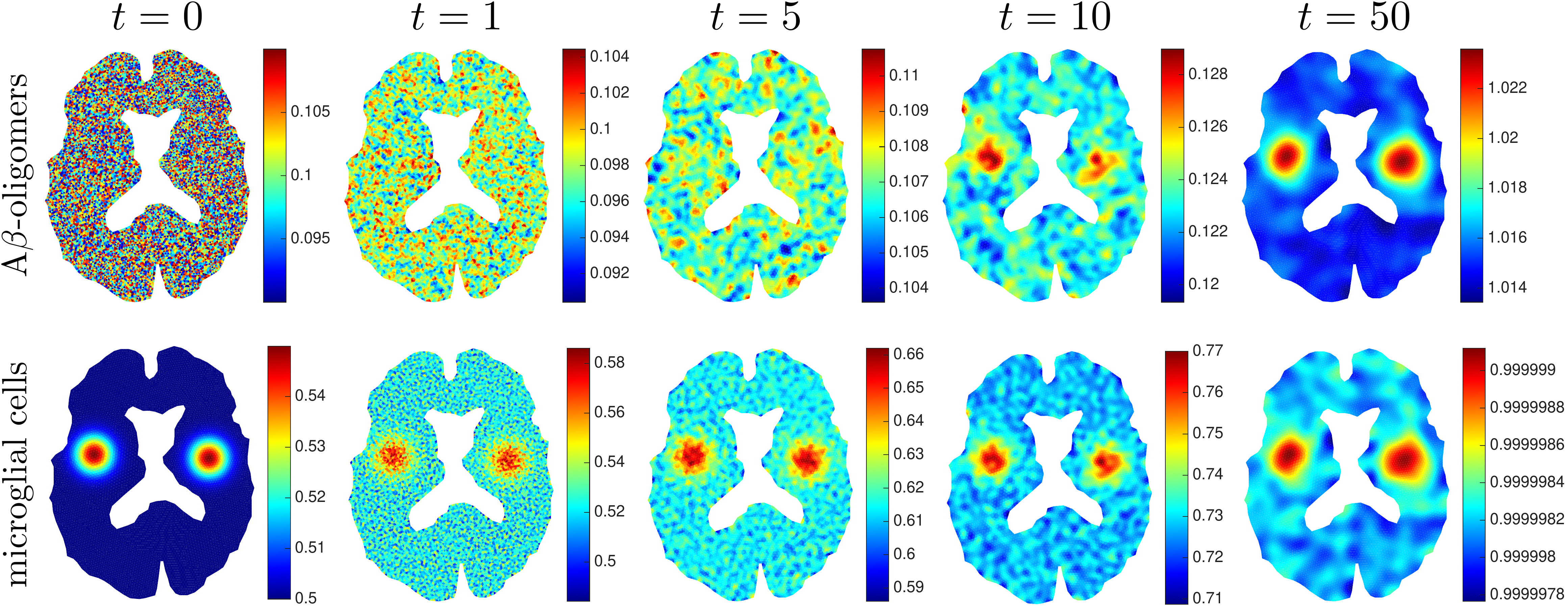}
	\caption{Numerical solutions of the concentrations of $A\beta$-oligomers $u_1(\boldsymbol{x},t)$ (first row) and microglial cells $u_4(\boldsymbol{x},t)$ (second row) for model \eqref{eq:model} over the time interval $0 \leq t \leq 50$. Snapshots are shown at $t = 0$ (initial condition), $t = 1$, $t = 5$, $t = 10$, and $t = 50$.}\label{fig:exa2}
\end{figure}
\subsection{Example 3. Turing pattern formation.}  In this example, we aim to study the formation of Turing patterns reported in \cite{estavoyer2025spatial} for model \eqref{eq:model}, with the sensitivity function given by $\chi(u) = \alpha u$. We remark that the convergence proof for this particular choice of $\chi$ can be established in a manner analogous to the proof presented in this work for $\chi$ defined by \eqref{eq:chifun}. We aim to investigate whether the FV scheme can reproduce the two types of patterns reported in \cite{estavoyer2025spatial}, namely stripe and dot patterns. To do so, we set a circular domain as shown in Figure \ref{fig:meshes} (b), with a triangulation consisting of $9237$ elements satisfying the conditions in Definition \ref{def:am}. We set initial conditions as a perturbation of the positive equilibrium point $\mathcal{E}^{*} = (1.0686,1.7739,0.7310,1.0,0.5166)$, i.e. we set $\boldsymbol{u}_K^0 = \mathcal{E}^{*}+0.001(1-2r_K)$, $r_K\sim \mathcal{U}(0,1)$, for all $K\in \mathcal{T}$. Notice that, in this example we have that $\boldsymbol{u}^0\notin \mathcal{R}$, therefore, we cannot expect the numerical solution produced by our FV scheme to remain within $\mathcal{R}$, as illustrated in Figures \ref{fig:exa3a}-\ref{fig:exa3b}. We set $d=0.15$ and $\alpha=24$ (stripes pattern, Figure \ref{fig:exa3a}) and $\alpha=40$ (dot pattern, Figure \ref{fig:exa3b}); the remaining parameters are taken from Table \ref{tab:par} and we simulate until $T=2000$. 
\begin{figure}[ht!]
	\centering
	\includegraphics[scale=0.31]{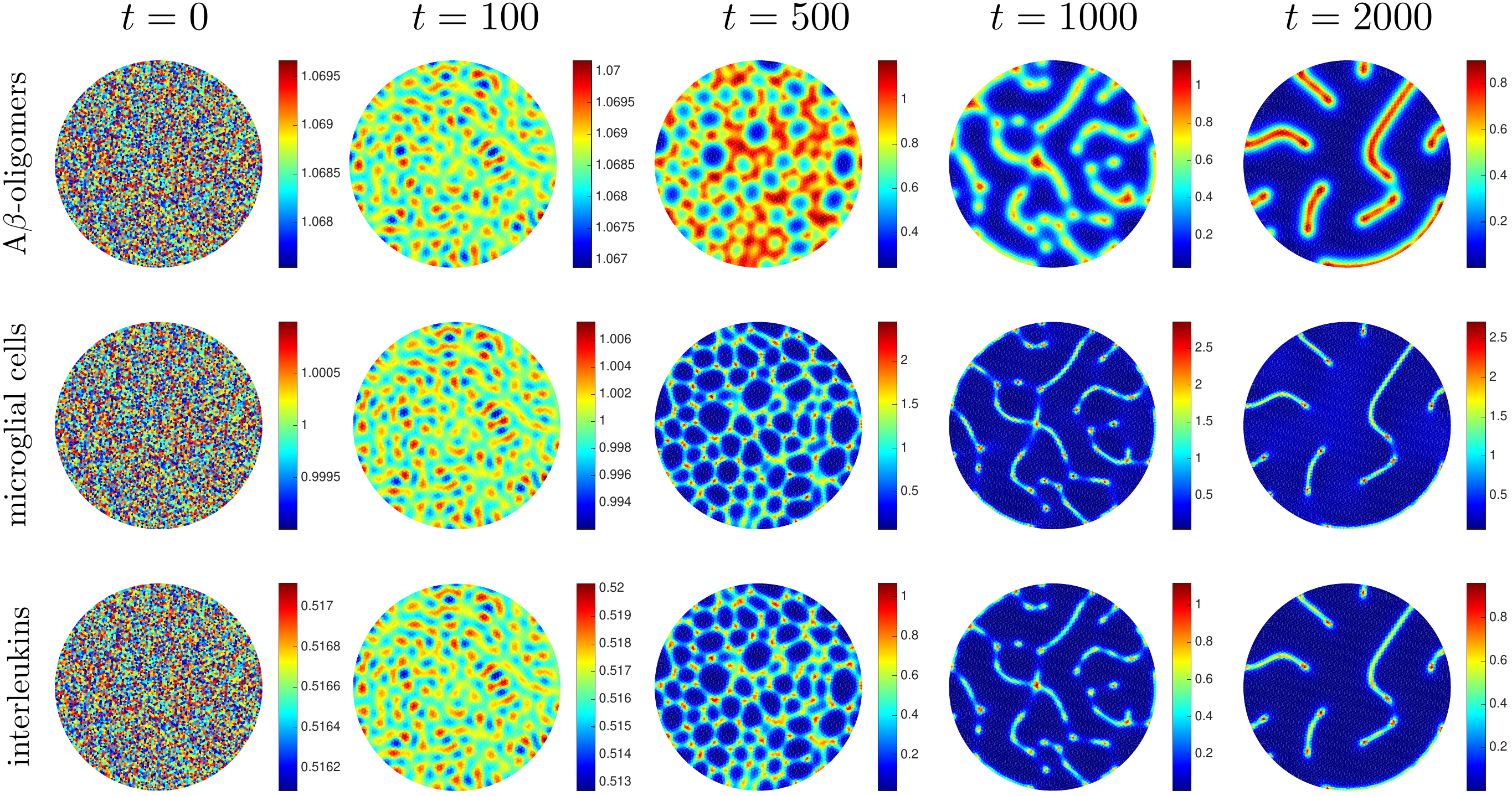}
	\caption{Numerical solutions of the concentrations of $A\beta$-oligomers $u_1(\boldsymbol{x},t)$ (first row), microglial cells $u_4(\boldsymbol{x},t)$ (second row), and interleukins $u_5(\boldsymbol{x},t)$ (third row) for model \eqref{eq:model} over the time interval $0 \leq t \leq 2000$. Snapshots are shown at $t = 0$ (initial condition), $t = 100$, $t = 500$, $t = 1000$, and $t = 2000$.}\label{fig:exa3a}
\end{figure}
\begin{figure}[ht!]
	\centering
	\includegraphics[scale=0.31]{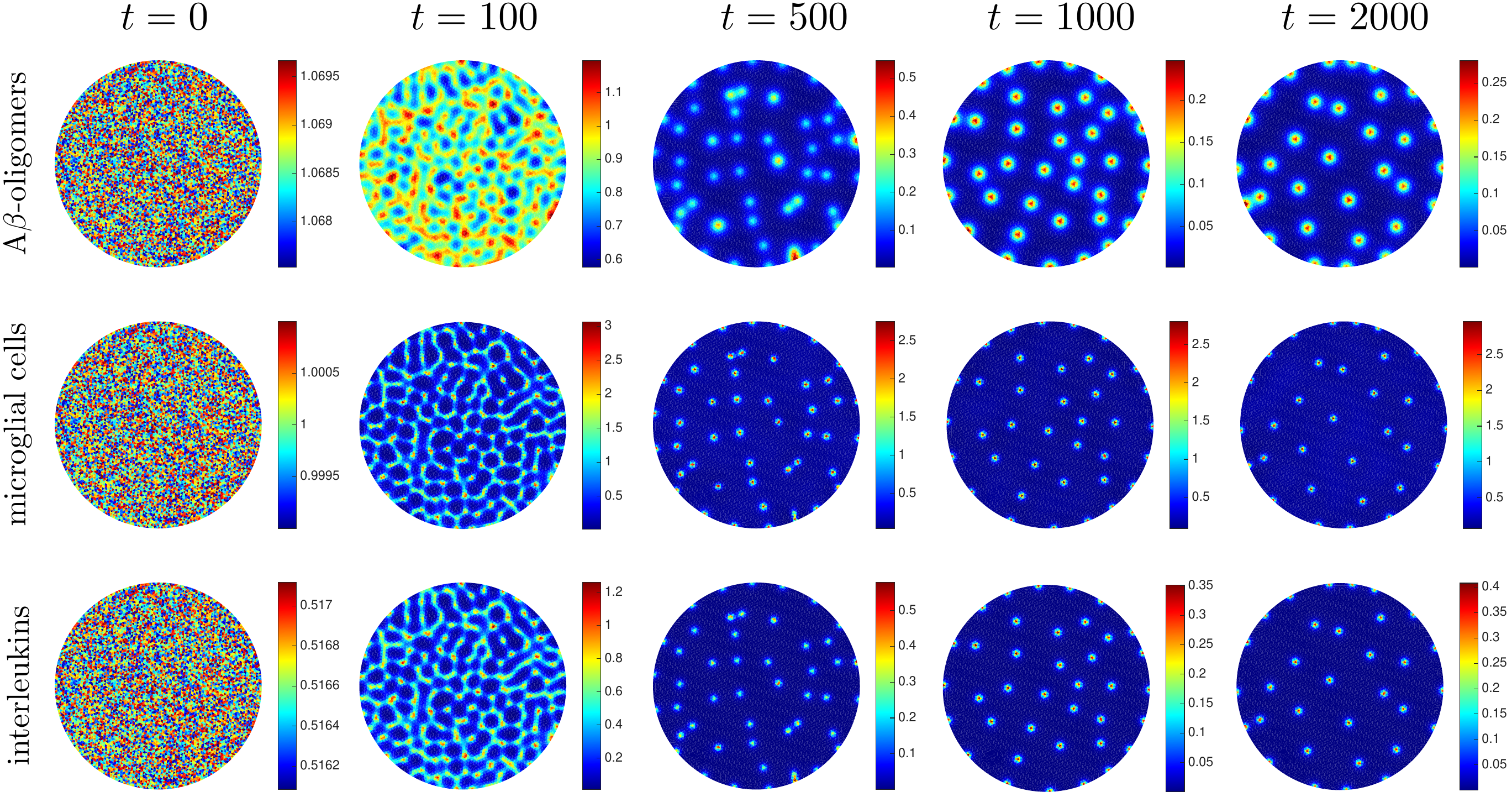}
	\caption{Numerical solutions of the concentrations of $A\beta$-oligomers $u_1(\boldsymbol{x},t)$ (first row), microglial cells $u_4(\boldsymbol{x},t)$ (second row), and interleukins $u_5(\boldsymbol{x},t)$ (third row) for model \eqref{eq:model} over the time interval $0 \leq t \leq 2000$. Snapshots are shown at $t = 0$ (initial condition), $t = 100$, $t = 500$, $t = 1000$, and $t = 2000$.}\label{fig:exa3b}
\end{figure}
\section{Conclusions}  \label{sec:conc} 
In this work, we have developed and analyzed a finite volume scheme for a reaction–diffusion–chemotaxis model describing the interactions among A$\beta$-monomers, A$\beta$-oligomers, microglial cells, interleukins, and neurons in the progression of Alzheimer’s disease. The first contribution of this paper is to present a modified version of the model addressed in \cite{ciuperca2024qualitative} by incorporating a more general sensitivity function $\chi$. This modification yields a more realistic description of microglial chemotactic movement toward A$\beta$-oligomer concentrations, as the sensitivity naturally vanishes once the microglial population reaches its recruitment threshold $\hat{m}$. Next, we propose a novel semi-implicit FV scheme to approximate the resulting system and then, by employing a priori $L^2$-estimates and compactness arguments, we show that a sequence of discrete solutions produced by this scheme converges to an admissible weak solution of the full chemotaxis-inflammation model, as stated in Theorem \ref{thm:conv}. The FV method incorporates nonstandard finite difference (NSFD) techniques for reaction terms, ensuring that the discrete dynamics reproduce the qualitative behavior of the SH ordinary differential system. We show in Section \ref{sec:dc} that the method preserves: positivity and boundedness of populations (i.e. the invariance of the rectangle $\mathcal{R}$), the equilibrium points, and the local stability of the disease-free equilibrium.

In Section \ref{sec:numexa}, through a series of numerical tests, the FV scheme demonstrates robustness with respect to geometry of the domain, time-step size, and parameter variations. Moreover, the numerical results successfully capture microglial chemotaxis toward A$\beta$-oligomer concentrations, consistent with biological mechanisms and existing literature \cite{sakono_amyloid_2010,forloni_alzheimers_2018}. Furthermore, the method is shown to reproduce the stripe and dot Turing patterns reported previously for this model \cite{estavoyer2025spatial}, demonstrating that the discretization is not only stable but also able to capture complex spatial morphologies.

Finally, we are interested in performing a numerical study of a more general model in which a concentration of A$\beta$-proto-oligomers of different sizes is considered \cite{ciuperca2024qualitative}. Furthermore, we seek to establish high-order numerical schemes preserving the dynamic of the continuous model.
\section*{Acknowledgments}
JBC is supported by the National Agency for Research and Development, ANID-Chile through the Scholarship Program, Becas Doctorado Nacional 2022, 21221387 and for PID2023-146836NB-I00, granted by MCIN/ AEI /10.13039/501100011033. NT is supported by the project EUR SPECTRUM with the initiative IDEX of Université de Côte d'Azur. MS, NT and LMV are supported by the Center for Mathematical Modeling (CMM) BASAL fund FB210005 for center of excellence from ANID-Chile.  MS is supported by the Fondecyt project 1220869.


\bibliographystyle{unsrt}
\bibliography{bibliography-bvst}

\begin{thebibliography}{10}

\bibitem{haass_soluble_2007}
Christian Haass and Dennis~J Selkoe.
\newblock Soluble protein oligomers in neurodegeneration: lessons from the
  {A}lzheimer's amyloid $\beta$-peptide.
\newblock {\em Nature reviews Molecular cell biology}, 8(2):101--112, 2007.

\bibitem{sakono_amyloid_2010}
Masafumi Sakono and Tamotsu Zako.
\newblock Amyloid oligomers: formation and toxicity of {A}$\beta$ oligomers.
\newblock {\em The FEBS journal}, 277(6):1348--1358, 2010.

\bibitem{sengupta_role_2016}
Urmi Sengupta, Ashley~N Nilson, and Rakez Kayed.
\newblock The role of amyloid-$\beta$ oligomers in toxicity, propagation, and
  immunotherapy.
\newblock {\em EBioMedicine}, 6:42--49, 2016.

\bibitem{soto_unfolding_2003}
Claudio Soto.
\newblock Unfolding the role of protein misfolding in neurodegenerative
  diseases.
\newblock {\em Nature Reviews Neuroscience}, 4(1):49--60, 2003.

\bibitem{cohen_proliferation_2013}
Samuel~IA Cohen, Sara Linse, Leila~M Luheshi, Erik Hellstrand, Duncan~A White,
  Luke Rajah, Daniel~E Otzen, Michele Vendruscolo, Christopher~M Dobson, and
  Tuomas~PJ Knowles.
\newblock Proliferation of amyloid-$\beta$42 aggregates occurs through a
  secondary nucleation mechanism.
\newblock {\em Proceedings of the National Academy of Sciences},
  110(24):9758--9763, 2013.

\bibitem{forloni_alzheimers_2018}
Gianluigi Forloni and Claudia Balducci.
\newblock Alzheimer’s {D}isease, {O}ligomers, and {I}nflammation.
\newblock {\em Journal of Alzheimer’s Disease}, 62(3):1261--1276, 2018.

\bibitem{kinney_inflammation_2018}
Jefferson~W Kinney, Shane~M Bemiller, Andrew~S Murtishaw, Amanda~M Leisgang,
  Arnold~M Salazar, and Bruce~T Lamb.
\newblock Inflammation as a central mechanism in {A}lzheimer's disease.
\newblock {\em Alzheimer's \& Dementia: Translational Research \& Clinical
  Interventions}, 4:575--590, 2018.

\bibitem{al-ghraiybah_glial_2022}
Nour~F Al-Ghraiybah, Junwei Wang, Amer~E Alkhalifa, Andrew~B Roberts, Ruchika
  Raj, Euitaek Yang, and Amal Kaddoumi.
\newblock Glial cell-mediated neuroinflammation in {A}lzheimer’s disease.
\newblock {\em International Journal of Molecular Sciences}, 23(18):10572,
  2022.

\bibitem{lopategui_cabezas_role_2014}
I~Lopategui Cabezas, A~Herrera Batista, and G~Pent{\'o}n Rol.
\newblock The role of glial cells in {A}lzheimer disease: potential therapeutic
  implications.
\newblock {\em Neurolog{\'\i}a (English Edition)}, 29(5):305--309, 2014.

\bibitem{ciuperca2024qualitative}
Ionel Ciuperca, Laurent Pujo-Menjouet, Leon Matar-Tine, Nicolas Torres, and
  Vitaly Volpert.
\newblock A qualitative analysis of an {A$\beta$}-monomer model with
  inflammation processes for {Alzheimer’s} disease.
\newblock {\em Royal Society Open Science}, 11(5):231536, 2024.

\bibitem{estavoyer2025spatial}
Maxime Estavoyer, Nicolas Torres, Julien Blohm, Malay Banerjee, and Laurent
  Pujo-Menjouet.
\newblock Spatial pattern analysis of a {A$\beta$}-monomer model with
  inflammation processes for alzheimer’s disease.
\newblock {\em Mathematical Modelling of Natural Phenomena}, 20:22, 2025.

\bibitem{hillen2001global}
Thomas Hillen and Kevin Painter.
\newblock Global existence for a parabolic chemotaxis model with prevention of
  overcrowding.
\newblock {\em Advances in Applied Mathematics}, 26(4):280--301, 2001.

\bibitem{perthame2009existence}
Beno{\^\i}t Perthame and Anne-Laure Dalibard.
\newblock Existence of solutions of the hyperbolic {Keller-Segel} model.
\newblock {\em Transactions of the american mathematical society},
  361(5):2319--2335, 2009.

\bibitem{andreianov2011finite}
Boris Andreianov, Mostafa Bendahmane, and Mazen Saad.
\newblock Finite volume methods for degenerate chemotaxis model.
\newblock {\em Journal of computational and applied mathematics},
  235(14):4015--4031, 2011.

\bibitem{mickens1993nonstandard}
Ronald~E Mickens.
\newblock {\em Nonstandard finite difference models of differential equations}.
\newblock World Scientific, 1993.

\bibitem{andrade2020modeling}
Martin Andrade-Restrepo, Paul Lemarre, Laurent Pujo-Menjouet, Leon~Matar Tine,
  and Ionel Ciuperca.
\newblock Modeling the spatial propagation of {A$\beta$} oligomers in
  alzheimer’s disease.
\newblock {\em ESAIM: ProcS}, 67:30--45, 2020.

\bibitem{ciuperca_alzheimers_2017}
Ionel~S. Ciuperca, Matthieu Dumont, Abdelkader Lakmeche, Pauline Mazzocco,
  Laurent Pujo-Menjouet, Human Rezaei, and Léon~M. Tine.
\newblock Alzheimer's disease and prion: An \textit{in vitro} mathematical
  model.
\newblock {\em Discrete and Continuous Dynamical Systems - B},
  24(10):5225--5260, 2019.

\bibitem{hao2016mathematical}
Wenrui Hao and Avner Friedman.
\newblock Mathematical model on alzheimer’s disease.
\newblock {\em BMC systems biology}, 10(1):108, 2016.

\bibitem{torres2025optimal}
Nicolas Torres, Emilio Molina, and Laurent Pujo-Menjouet.
\newblock An optimal control problem for anti-inflammatory treatments of
  alzheimer’s disease: N. torres et al.
\newblock {\em Journal of Mathematical Biology}, 91(1):1, 2025.

\bibitem{el2025modelling}
Wissam El~Hajj, Laurent Pujo-Menjouet, L{\'e}on~Matar Tine, and Vitaly Volpert.
\newblock Modelling of anti-inflammatory treatment in the alzheimer disease:
  optimal regimen and outcome.
\newblock {\em Bulletin of Mathematical Biology}, 87(12):1--31, 2025.

\bibitem{matthaus_diffusion_2006}
Franziska Matth{ä}us.
\newblock Diffusion versus network models as descriptions for the spread of
  prion diseasess in the brain.
\newblock {\em J. Theor. Biol}, 240:104--113, 2006.

\bibitem{matthaus_spread_2009}
Franziska Matth{ä}us.
\newblock The spread of prion diseases in the brain—models of reaction and
  transport on networks.
\newblock {\em Journal of Biological Systems}, 17(04):623--641, 2009.

\bibitem{bertsch_alzheimers_2016}
Michiel Bertsch, Bruno Franchi, Norina Marcello, Maria~Carla Tesi, and Andrea
  Tosin.
\newblock Alzheimer's disease: a mathematical model for onset and progression.
\newblock {\em Mathematical medicine and biology: {A} journal of the IMA},
  34(2):193--214, 2017.

\bibitem{angelini2013finite}
Oph{\'e}lie Angelini, Konstantin Brenner, and Danielle Hilhorst.
\newblock A finite volume method on general meshes for a degenerate parabolic
  convection--reaction--diffusion equation.
\newblock {\em Numerische Mathematik}, 123(2):219--257, 2013.

\bibitem{coudiere2013analysis}
Yves Coudi{\`e}re, Mazen Saad, and Alexandre Uzureau.
\newblock Analysis of a finite volume method for a bone growth system in vivo.
\newblock {\em Computers \& Mathematics with Applications}, 66(9):1581--1594,
  2013.

\bibitem{foucher2018convergence}
Fran{\c{c}}oise Foucher, Moustafa Ibrahim, and Mazen Saad.
\newblock Convergence of a positive nonlinear control volume finite element
  scheme for solving an anisotropic degenerate breast cancer development model.
\newblock {\em Computers \& Mathematics with Applications}, 76(3):551--578,
  2018.

\bibitem{alotaibi2024computational}
Manal Alotaibi, Francoise Foucher, Moustafa Ibrahim, and Mazen Saad.
\newblock Computational modeling of early-stage breast cancer progression using
  {TPFA} method: {A} numerical investigation.
\newblock {\em Applied Numerical Mathematics}, 198:236--257, 2024.

\bibitem{bendahmane2009convergence}
Mostafa Bendahmane and Mauricio Sepúlveda.
\newblock Convergence of a finite volume scheme for nonlocal reaction-diffusion
  systems modelling an epidemic disease.
\newblock {\em Discrete and Continuous Dynamical Systems - B}, 11(4):823--853,
  2009.

\bibitem{anaya2009mathematical}
V~Anaya, Mostafa Bendahmane, and M~Sep{\'u}lveda.
\newblock Mathematical and numerical analysis for reaction-diffusion systems
  modeling the spread of early tumors.
\newblock {\em Boletin de la Sociedad Espanola de Matem{\'a}tica Aplicada},
  47:55--62, 2009.

\bibitem{anaya2010numerical}
Veronica Anaya, Mostafa Bendahmane, and Mauricio Sepulveda.
\newblock A numerical analysis of a reaction--diffusion system modeling the
  dynamics of growth tumors.
\newblock {\em Mathematical Models and Methods in Applied Sciences},
  20(05):731--756, 2010.

\bibitem{anaya2015numerical}
Ver{\'o}nica Anaya, Mostafa Bendahmane, and Mauricio Sep{\'u}lveda.
\newblock Numerical analysis for a three interacting species model with
  nonlocal and cross diffusion.
\newblock {\em ESAIM: Mathematical Modelling and Numerical Analysis},
  49(1):171--192, 2015.

\bibitem{burger2020numerical}
Raimund B{\"u}rger, Rafael Ordonez, Mauricio Sep{\'u}lveda, and Luis~Miguel
  Villada.
\newblock Numerical analysis of a three-species chemotaxis model.
\newblock {\em Computers \& Mathematics with Applications}, 80(1):183--203,
  2020.

\bibitem{corti2023discontinuous}
Mattia Corti, Francesca Bonizzoni, Alfio~M Quarteroni, Paola~F Antonietti,
  et~al.
\newblock Discontinuous {Galerkin} methods for fisher-kolmogorov equation with
  application to $\alpha$-synuclein spreading in parkinson’s disease.
\newblock {\em Computer Methods in Applied Mechanics and Engineering},
  417:116450, 2023.

\bibitem{antonietti2024discontinuous}
Paola~F Antonietti, Francesca Bonizzoni, Mattia Corti, and Agnese Dall’Olio.
\newblock Discontinuous {Galerkin} approximations of the heterodimer model for
  protein--protein interaction.
\newblock {\em Computer Methods in Applied Mechanics and Engineering},
  431:117282, 2024.

\bibitem{pederzoli2025coupled}
Valentina Pederzoli, Mattia Corti, Davide Riccobelli, and Paola~F Antonietti.
\newblock A coupled mathematical and numerical model for protein spreading and
  tissue atrophy applied to {A}lzheimer’s disease.
\newblock {\em Computer Methods in Applied Mechanics and Engineering},
  444:118118, 2025.

\bibitem{wang2025mixed}
Gang Wang, Mingchao Cai, Xiaojing Dong, and Yinnian He.
\newblock A mixed virtual element method for the four-field poroelasticity
  problem on polygonal meshes and its simulation in brain edema.
\newblock {\em Numerical Methods for Partial Differential Equations},
  41(6):e70052, 2025.

\bibitem{nino2021convergence}
Viviana Ni{\~n}o-Celis, Diego~A Rueda-G{\'o}mez, and {\'E}lder~J
  Villamizar-Roa.
\newblock Convergence and positivity of finite element methods for a haptotaxis
  model of tumoral invasion.
\newblock {\em Computers \& Mathematics with Applications}, 89:20--33, 2021.

\bibitem{lopez2023numerical}
Jorge~L L{\'o}pez-Agredo, Diego~A Rueda-G{\'o}mez, and {\'E}lder~J
  Villamizar-Roa.
\newblock Numerical analysis of a mathematical model describing the evolution
  of hypoxic glioma cells.
\newblock {\em Computers \& Mathematics with Applications}, 131:138--157, 2023.

\bibitem{lopez2023theoretical}
Jorge~L L{\'o}pez-Agredo, Diego~A Rueda-G{\'o}mez, and {\'E}lder~J
  Villamizar-Roa.
\newblock Theoretical and numerical analysis of a parabolic system with
  chemoattraction modeling the growth of glioma cells.
\newblock {\em Applied Numerical Mathematics}, 186:143--163, 2023.

\bibitem{acosta2023upwind}
Daniel Acosta-Soba, Francisco Guill{\'e}n-Gonz{\'a}lez, and J~Rafael
  Rodr{\'\i}guez-Galv{\'a}n.
\newblock An upwind {DG} scheme preserving the maximum principle for the
  convective {Cahn-Hilliard} model.
\newblock {\em Numerical Algorithms}, 92(3):1589--1619, 2023.

\bibitem{acosta2025property}
Daniel Acosta-Soba, Francisco Guill{\'e}n-Gonz{\'a}lez, J~Rafael
  Rodr{\'\i}guez-Galv{\'a}n, and Jin Wang.
\newblock Property-preserving numerical approximation of a
  {Cahn--Hilliard--Navier--Stokes} model with variable density and degenerate
  mobility.
\newblock {\em Applied Numerical Mathematics}, 209:68--83, 2025.

\bibitem{careaga2026invariant}
Julio Careaga, Stefan Diehl, and Jaime Manríquez.
\newblock An invariant-region-preserving scheme for a
  {convection-reaction-Cahn–Hilliard} multiphase model of biofilm growth in
  slow sand filters.
\newblock {\em Computers \& Mathematics with Applications}, 201:146--170, 2026.

\bibitem{mickens2005dynamic}
Ronald~E Mickens.
\newblock Dynamic consistency: a fundamental principle for constructing
  nonstandard finite difference schemes for differential equations.
\newblock {\em Journal of difference equations and Applications},
  11(7):645--653, 2005.

\bibitem{patidar2016nonstandard}
Kailash~C Patidar.
\newblock Nonstandard finite difference methods: recent trends and further
  developments.
\newblock {\em Journal of Difference Equations and Applications},
  22(6):817--849, 2016.

\bibitem{sharma2021review}
Kushal Sharma, Seema Swami, Vimal~Kumar Joshi, and SB~Bhardwaj.
\newblock Review on non-standard finite difference (nsfd) schemes for solving
  linear and non-linear differential equations.
\newblock In {\em Advanced Numerical Methods for Differential Equations}, pages
  135--154. CRC Press, 2021.

\bibitem{hernandez2013nonstandard}
Eliseo Hernandez-Martinez, Hector Puebla, Francisco Valdes-Parada, and Jose
  Alvarez-Ramirez.
\newblock Nonstandard finite difference schemes based on green's function
  formulations for reaction--diffusion--convection systems.
\newblock {\em Chemical Engineering Science}, 94:245--255, 2013.

\bibitem{namjoo2018numerical}
Mehran Namjoo and Sadegh Zibaei.
\newblock Numerical solutions of fitzhugh--nagumo equation by exact
  finite-difference and nsfd schemes.
\newblock {\em Computational and Applied Mathematics}, 37(2):1395--1411, 2018.

\bibitem{rao2021performance}
Appanah Rao~Appadu, Yusuf Olatunji~Tijani, and Adebayo Abiodun~Aderogba.
\newblock On the performance of some nsfd methods for a 2-d generalized
  burgers--huxley equation.
\newblock {\em Journal of Difference Equations and Applications},
  27(11):1537--1573, 2021.

\bibitem{kumar2022new}
Kamalesh Kumar and Pramod~Chakravarthy Podila.
\newblock A new stable finite difference scheme and its error analysis for
  two-dimensional singularly perturbed convection--diffusion equations.
\newblock {\em Numerical Methods for Partial Differential Equations},
  38(5):1215--1231, 2022.

\bibitem{EYMARD2000713}
Robert Eymard, Thierry Gallouët, and Raphaèle Herbin.
\newblock Finite volume methods.
\newblock In {\em Solution of Equation in $\mathbb{R}^n$ (Part 3), Techniques
  of Scientific Computing (Part 3)}, volume~7 of {\em Handbook of Numerical
  Analysis}, pages 713--1018. Elsevier, 2000.

\bibitem{haraux2017simple}
Alain Haraux.
\newblock A simple characterization of positivity preserving semi-linear
  parabolic systems.
\newblock {\em Journal of the Korean Mathematical Society}, 54(6):1817--1828,
  2017.

\bibitem{temam2001navier}
Roger Temam.
\newblock {\em {Navier--Stokes} equations: theory and numerical analysis}.
\newblock American Mathematical Society, 2001.

\bibitem{chainais2003finite}
Claire Chainais-Hillairet, Jian-Guo Liu, and Yue-Jun Peng.
\newblock Finite volume scheme for multi-dimensional drift-diffusion equations
  and convergence analysis.
\newblock {\em ESAIM: Mathematical Modelling and Numerical Analysis},
  37(2):319--338, 2003.

\bibitem{bendahmane2014convergence}
Mostafa Bendahmane, Ziad Khalil, and Mazen Saad.
\newblock Convergence of a finite volume scheme for gas-water flow in a
  multi-dimensional porous medium.
\newblock {\em Mathematical Models and Methods in Applied Sciences},
  24(01):145--185, 2014.

\bibitem{mickens2000applications}
Ronald~E Mickens.
\newblock {\em Applications of nonstandard finite difference schemes}.
\newblock World Scientific, 2000.

\bibitem{geuzaine2009gmsh}
Christophe Geuzaine and Jean-Fran{\c{c}}ois Remacle.
\newblock Gmsh: A 3-d finite element mesh generator with built-in pre-and
  post-processing facilities.
\newblock {\em International journal for numerical methods in engineering},
  79(11):1309--1331, 2009.

\end{thebibliography}







\end{document}